\newcommand{\R}{\mathbb{R}}
\newcommand{\N}{\mathbb{N}}
\newcommand{\calL}{\mathcal{L}}
\newcommand{\calH}{\mathcal{H}}
\newcommand{\calE}{\mathcal{E}}
\newcommand{\calU}{\mathcal{U}}
\newcommand{\spt}{\operatorname{spt}}
\newcommand{\Hd}{\dim_{\mathrm{H}}}
\newcommand{\diam}{\operatorname{diam}}
\newcommand{\dist}{\operatorname{dist}}
\def\fz{\infty}
\def\az{\alpha}
\def\dist{{\mathop\mathrm{\,dist\,}}}
\def\lz{\lambda}
\def\dz{\delta}
\def\ez{\epsilon}
\def\kz{\kappa}
\def\gz{{\gamma}}
\def\pa{\partial}
\def\Barint_#1{\mathchoice
          {\mathop{\vrule width 6pt height 3 pt depth -2.5pt
                  \kern -8pt \intop}\nolimits_{#1}}%
          {\mathop{\vrule width 5pt height 3 pt depth -2.6pt
                  \kern -6pt \intop}\nolimits_{#1}}%
          {\mathop{\vrule width 5pt height 3 pt depth -2.6pt
                  \kern -6pt \intop}\nolimits_{#1}}%
          {\mathop{\vrule width 5pt height 3 pt depth -2.6pt
                  \kern -6pt \intop}\nolimits_{#1}}}
\numberwithin{equation}{section}
\theoremstyle{plain}
\newtheorem{thm}[equation]{Theorem}
\newtheorem*{"thm"}{"Theorem"}
\newtheorem{lemma}[equation]{Lemma}
\newtheorem{ex}[equation]{Example}
\newtheorem{cor}[equation]{Corollary}
\theoremstyle{definition}
\newtheorem{definition}[equation]{Definition}
\theoremstyle{remark}
\newtheorem{remark}[equation]{Remark}
\newcommand{\nref}[1]{(\hyperref[#1]{#1})}
\DeclareMathSymbol{\intop}  {\mathop}{mathx}{"B3}
\author{Jiayin Liu}
\title[Restricted Projections in $\R^{n+1}$]{Restricted Projections to Lines in $\R^{n+1}$}
\address{Department of Mathematics and Statistics\\ University of Jyv\"askyl\"a,
P.O. Box 35 (MaD)\\
FI-40014 University of Jyv\"askyl\"a\\
Finland}
\email{jiayin.mat.liu@jyu.fi}
\date{\today}
\subjclass[2010]{28A80, 28A78}
\keywords{Restricted projection, Furstenberg sets, Hausdorff dimension}
\thanks{The author is supported by the Academy of Finland via
the projects  \emph{Incidences on Fractals}, grant No. 321896
 and \emph{Singular integrals, harmonic functions,
and boundary regularity in Heisenberg groups}, grant No. 321696.}
\begin{document}

\begin{abstract}
We prove the following restricted projection theorem.
Let $n\ge 3$ and $\Sigma \subset S^{n}$ be an $(n-1)$-dimensional $C^2$ manifold such that $\Sigma$ has sectional curvature $>1$. Let $Z \subset \R^{n+1}$ be analytic and let $0 < s < \min\{\dim Z, 1\}$. Then
\begin{equation*}
  \dim \{z \in \Sigma : \dim (Z \cdot z) < s\} \le (n-2)+s = (n-1) + (s-1) < n-1.
\end{equation*}
In particular,
for almost every $z \in \Sigma$, $\dim (Z \cdot z) = \min\{\dim Z, 1\}$.

The core idea, originated from K\"{a}enm\"{a}ki-Orponen-Venieri, is to transfer the restricted projection problem to the study of the dimension lower bound of Furstenberg sets of cinematic family contained in $C^2([0,1]^{n-1})$. This cinematic family of functions with multivariables are extensions of those of one variable by Pramanik-Yang-Zahl and Sogge. Since the Furstenberg sets of cinematic family contain the affine Furstenberg sets as a special case, the dimension lower bound of Furstenberg sets improves the one by H\'{e}ra, H\'{e}ra-Keleti-M\'{a}th\'{e} and D{\k{a}}browski-Orponen-Villa.

Moreover, our method to show the restricted projection theorem can also give a new proof for the Mattila's projection theorem in $\R^n$ with $n \ge 3$.
\end{abstract}

\maketitle
\tableofcontents

\section{Introduction}

We begin with the following fundamental projection theorem.
\begin{thm}\label{fundproj}
  Let $n\ge 2$, $Z \subset \R^{n}$ be Borel, and $0 \le s < \Hd Z$. Then $\Hd (Z \cdot z) = \min\{\Hd Z,1\}$ for $\calH^{n-1}$-almost every $z \in S^{n-1}$ and moreover,
  \begin{equation}\label{Kauf1}
    \Hd \{z \in S^{n-1} : \Hd (Z \cdot z) < s\} \le (n-2)+s.
  \end{equation}
  In particular,
for almost every $z \in S^{n-1}$, $\Hd (Z \cdot z) = \min\{\Hd Z, 1\}$.
\end{thm}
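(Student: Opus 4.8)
The plan is to run the classical potential-theoretic argument of Kaufman and Mattila. For a compactly supported Borel probability measure $\lambda$ on a Euclidean space and $\sigma\ge 0$, write $I_\sigma(\lambda)=\iint|x-y|^{-\sigma}\,d\lambda(x)\,d\lambda(y)$ for its Riesz $\sigma$-energy; I will use the two standard consequences of Frostman's lemma that (i) if $\lambda$ is supported on an analytic set $A$ and $I_\sigma(\lambda)<\infty$ then $\Hd A\ge\sigma$, and (ii) every analytic set $A$ with $\Hd A>\sigma$ carries a probability measure $\lambda$ with $\lambda(B(x,r))\lesssim r^{\sigma'}$ for some $\sigma'>\sigma$, hence with $I_\sigma(\lambda)<\infty$. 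By inner regularity of Hausdorff content we may replace $Z$ by a compact subset $K\subseteq Z$ with $\Hd K$ arbitrarily close to $\Hd Z$; since $K\cdot z\subseteq Z\cdot z$ forces $\Hd(K\cdot z)\le\Hd(Z\cdot z)$, it suffices to prove the estimate \eqref{Kauf1} for compact sets, so assume $Z$ compact and write $\pi_z(x)=x\cdot z$, so that each $Z\cdot z=\pi_z(Z)$ is compact. Moreover \eqref{Kauf1} already yields the ``almost every'' conclusion: the reverse bound $\Hd(Z\cdot z)\le\min\{\Hd Z,1\}$ is trivial since $\pi_z$ is $1$-Lipschitz into $\R$, while applying \eqref{Kauf1} along a sequence $s_k\uparrow\min\{\Hd Z,1\}$ with $s_k<1$ shows that each set $\{z:\Hd(Z\cdot z)<s_k\}$ has Hausdorff dimension $\le(n-2)+s_k<n-1$, hence $\calH^{n-1}$-measure zero, and therefore so does their union $\{z:\Hd(Z\cdot z)<\min\{\Hd Z,1\}\}$. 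Finally, \eqref{Kauf1} is vacuous for $s\ge 1$, so we may and do assume $s<1$.

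Write $E_s=\{z\in S^{n-1}:\Hd(Z\cdot z)<s\}$, which is a Borel set. Suppose toward a contradiction that $\Hd E_s>(n-2)+s$, and fix $\sigma$ with $(n-2)+s<\sigma<\Hd E_s$. By (ii) there is a probability measure $\nu$ on $E_s$ with $\nu(B(z,r))\lesssim r^\sigma$, and, since $s<\Hd Z$, also a probability measure $\mu$ on $Z$ with $I_s(\mu)<\infty$; put $\mu_z=(\pi_z)_*\mu$, a probability measure on the compact set $Z\cdot z$. For $z\in E_s$ we have $\Hd(Z\cdot z)<s$, so by (i) the set $Z\cdot z$ carries no probability measure of finite $s$-energy; in particular $I_s(\mu_z)=\infty$, and hence $\int_{E_s}I_s(\mu_z)\,d\nu(z)=\infty$. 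On the other hand, Fubini's theorem yields
\[
\int_{E_s}I_s(\mu_z)\,d\nu(z)=\iint\Bigl(\int_{E_s}|(x-y)\cdot z|^{-s}\,d\nu(z)\Bigr)\,d\mu(x)\,d\mu(y),
\]
and, with $e=(x-y)/|x-y|$, the inner integral equals $|x-y|^{-s}\int_{E_s}|e\cdot z|^{-s}\,d\nu(z)$. The slab $\{z\in S^{n-1}:|e\cdot z|\le\delta\}$ is a $\delta$-neighbourhood of a great $(n-2)$-sphere, hence is covered by $\lesssim\delta^{-(n-2)}$ balls of radius $\delta$, so $\nu(\{z\in S^{n-1}:|e\cdot z|\le\delta\})\lesssim\delta^{\sigma-(n-2)}$; summing this bound over dyadic scales gives $\int_{S^{n-1}}|e\cdot z|^{-s}\,d\nu(z)\lesssim 1$ uniformly in $e$, precisely because $\sigma-(n-2)>s$. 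Thus the double integral is $\lesssim I_s(\mu)<\infty$, which contradicts $\int_{E_s}I_s(\mu_z)\,d\nu(z)=\infty$. Hence $\Hd E_s\le(n-2)+s$.

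The argument is essentially routine, and the two points I would be most careful about are both threshold issues. The reduction to compact $Z$ is genuinely used at the implication $\Hd(Z\cdot z)<s\Rightarrow I_s(\mu_z)=\infty$: for merely Borel $Z$ the pushforward $\mu_z$ can charge $\overline{Z\cdot z}\setminus(Z\cdot z)$, a set of possibly larger dimension. And the slab estimate $\nu(\{|e\cdot z|\le\delta\})\lesssim\delta^{\sigma-(n-2)}$ --- equivalently, the convergence of the inner spherical integral --- is exactly what pins down the exponent $(n-2)+s$ and forces the requirement $\sigma>(n-2)+s$; here the ``$n-2$'' is the dimension of the great subsphere $\{z\in S^{n-1}:e\cdot z=0\}$ on which the projection $\pi_z$ degenerates in the direction of $e$. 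None of this uses the sectional-curvature hypothesis of the abstract; that refinement is needed only for the genuinely restricted version of the problem treated later in the paper.
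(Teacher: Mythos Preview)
Your argument is correct: this is precisely the classical Kaufman--Mattila energy method, and the key slab estimate $\nu(\{z:|e\cdot z|\le\delta\})\lesssim\delta^{\sigma-(n-2)}$ is exactly what produces the exponent $(n-2)+s$. The reduction to compact $Z$ and the dyadic summation are handled cleanly.

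However, the paper does not reprove Theorem~\ref{fundproj} by this route. It first cites the theorem as known (Marstrand, Kaufman, Mattila), and then, as one of its contributions, gives a \emph{new} proof for $n\ge 3$ via Theorem~\ref{mattila}: one shows that the family $F=\{f_z(x)=\langle\Sigma(x),z\rangle\}_{z\in Z}$ with $\Sigma\colon[0,1]^{n-1}\to S^{n-1}$ is a cinematic family (here only conditions involving $|h|$ and $|\nabla h|$ are needed, since $\Sigma$ has full dimension in $S^{n-1}$), and then feeds this into the $\delta$-discretized Furstenberg machinery (Lemma~\ref{dznbhd1}, Theorem~\ref{thmconfig}) developed in Sections~3--4. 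Your approach is shorter, self-contained, and covers $n=2$ as well; the paper's approach is deliberately indirect, its point being that the same engine which handles the genuinely restricted projections of Theorem~\ref{mainproj2} also recovers the unrestricted case, thereby situating Mattila's theorem as a degenerate instance of the cinematic framework.
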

In the case $n=2$, Theorem \ref{fundproj} is a combination of results in \cite{MR0063439,Ka} by Marstrand and Kaufman. For all $n\ge 2$, Theorem \ref{fundproj} is proved by
Mattila \cite{MR0409774}.

Noting that $Z \cdot z$ has the same dimension as the orthogonal projection of $Z$ to span$\{z\}$, the above theorem states that for $\calH^{n-1}$-almost all directions $z \in S^{n-1}$, the orthogonal projection of $Z$ has dimension $\min\{\Hd Z,1\}$.
Since when $n\ge 3$, there are $n-1\ge 2$ dimensional many directions, it is natural to study projection properties for some restricted family of directions. Recently, there are a series of results establishing projection theorems to lines restricted to one dimensional family of directions in $\R^n$ for $n\ge 3$.

When $n=3$, Pramanik-Yang-Zahl \cite{2022arXiv220702259P} proved the following theorem, confirming a conjecture in \cite{FasslerOrponen14} by F\"{a}ssler and Orponen.
\begin{thm}\label{pyz}
  Let $I \subset \R$ be a compact interval and $\gz:I\to S^{2}$ be a $C^2$ curve that satisfies the ``non-degenerate'' condition:
\begin{equation}\label{nondeg0}
    \text{span}\{ \gz(t), \dot \gz(t), \ddot \gz(t)\}=\R^3 \text{  for all }  t \in I.
\end{equation}
Let $Z \subset \R^{3}$ be an analytic set. Then for almost every $t \in I$, $\Hd (\gz(t) \cdot Z) = \min\{\Hd Z, 1\}$ and
\begin{equation}\label{Kauf2}
    \Hd \{t \in I : \Hd (Z \cdot \gz(t)) < s\} \le s.
  \end{equation}
\end{thm}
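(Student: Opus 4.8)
The plan is to follow the transference scheme of K\"aenm\"aki--Orponen--Venieri and Pramanik--Yang--Zahl: turn the projection statement into a Furstenberg-type dimension estimate for a \emph{curved} family of graphs in the plane -- the curvature being exactly the content of the non-degeneracy hypothesis \eqref{nondeg0} -- and then prove that estimate by induction on scales.

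\textbf{Step 1 (the cinematic family).} For $x\in\R^3$ put $g_x\colon I\to\R$, $g_x(t)=\gz(t)\cdot x$. Since $g_x-g_{x'}=g_{x-x'}$ and $g^{(k)}_v(t)=\gz^{(k)}(t)\cdot v$, hypothesis \eqref{nondeg0} together with compactness of $I$ and the $C^2$-regularity of $\gz$ yields the uniform bound $|\gz(t)\cdot v|+|\dot\gz(t)\cdot v|+|\ddot\gz(t)\cdot v|\gtrsim|v|$ for all $t\in I$ and $v\in\R^3$; equivalently $\|g_x-g_{x'}\|_{C^2(I)}\gtrsim|x-x'|$, and the graphs of two distinct members can be tangent to order at most one (if $g_v(t_0)=g_v'(t_0)=0$ then $g_v''(t_0)\ne0$). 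After quotienting out the trivial scaling $g_{\lz x}=\lz g_x$ (restrict $x$ to an affine chart transverse to the radial direction) this is precisely the statement that $\{g_x\}$ is a \emph{cinematic family} of $C^2$ functions in the sense of Sogge and Pramanik--Yang--Zahl. The small-scale consequences I will use are: $\{t:|\gz(t)\cdot v|<\lz\}$ is contained in $O(1)$ intervals of length $\lesssim(\lz/|v|)^{1/2}$; and two $\delta$-separated members whose graphs run within distance $\delta$ of one another do so only along an arc of length $\lesssim\delta^{1/2}$.

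\textbf{Step 2 (reduction to a Furstenberg estimate).} Suppose for contradiction that $E=\{t\in I:\Hd(Z\cdot\gz(t))<s\}$ has $\Hd E>s$, write $\pi_tx=g_x(t)$, and fix $s<s'<\Hd E$ and $s<\tau'<\Hd Z$. For $t\in E$ the set $\pi_t(Z)$ has dimension $<s$, hence $\calH^s$-measure zero; after the usual regularisations (pass to a Frostman subset of $Z$, to a subset $E'\subseteq E$ with $0<\calH^{s'}(E')<\infty$, and to one well-chosen scale) consider the set $F:=\bigcup_{t\in E'}\{t\}\times\overline{\pi_t(Z)}\subseteq E'\times\R$. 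For every $x$ in the subfamily, $\mathrm{graph}(g_x)\cap F$ contains the bi-Lipschitz image $\{(t,g_x(t)):t\in E'\}$ of $E'$, so it has positive $\calH^{s'}$-measure, while the $t$-fibres of $F$ have dimension $<s$, forcing $\Hd F<s'+s$. These two facts contradict the following \emph{Kaufman-type Furstenberg estimate for cinematic curves}, which is therefore what remains to prove: if $G\subseteq\R^2$ meets each member of a $\beta$-dimensional family of cinematic curves in a set of positive $\calH^{\alpha}$-measure with $0<\alpha\le1$, then $\Hd G\ge\alpha+\beta$ (apply it with $\alpha=s'$, $\beta=\tau'$ to get $\Hd F\ge s'+\tau'>s'+s$). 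The cruder $L^2$-energy method -- which uses only that $\{t:|\gz(t)\cdot v|<\lz\}$ has measure $\lesssim(\lz/|v|)^{1/2}$ -- gives only the non-sharp $\Hd E\le2s$, and nothing once $s\ge\tfrac12$, so genuinely new input is required.

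\textbf{Step 3 (proof of the Furstenberg estimate; the crux).} In $\delta$-discretised form one wants: a $\delta$-separated family $\{C_a\}$ of arcs of cinematic curves obeying a Frostman-type lower bound $\gtrsim\delta^{-\beta}$ on their number in each ball, such that the $\delta$-neighbourhood of every $C_a$ contains $\gtrsim\delta^{-\alpha}$ of the $\delta$-squares of a family $\calP$ (with a mild non-concentration condition), must satisfy $\#\calP\gtrsim\delta^{-\alpha-\beta+\ez}$. I would prove this by induction on scales in the Wolff--Sogge style: a \emph{two-ends reduction} to subarcs so that the squares seen by each arc are not concentrated in a short subarc; a \emph{bush/hairbrush} argument using Step 1 -- a $\delta$-square in the $\delta$-neighbourhood of two distinct cinematic arcs forces them to separate at rate $\delta^{1/2}$, so only boundedly many arcs can run together through one square, and a hairbrush over a rich square contains many pairwise transverse arcs -- followed by a \emph{parabolic rescaling} converting such a hairbrush at scale $\delta$ into a cinematic Furstenberg configuration at scale $\delta^{1/2}$, to which the inductive hypothesis applies; running the same scheme with Frostman data in place of cardinalities yields the measure-theoretic form used in Step~2. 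Here \eqref{nondeg0} enters in an essential, quantitative way (for a degenerate family, e.g.\ lines through a point, the estimate is false). The main obstacle is exactly this step: one must check that the curves produced by the rescaling are again a cinematic family with constants independent of $\delta$, and that the two-ends and hairbrush bookkeeping closes the induction without conceding a power of $\delta$ at each stage; Steps 1--2 are soft by comparison.
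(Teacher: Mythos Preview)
The paper does not prove this theorem: it is quoted as a result of Pramanik--Yang--Zahl \cite{2022arXiv220702259P}, and the paper explicitly remarks that ``our technique only works for $n\ge 3$ in Theorem \ref{mainproj} \ldots\ thus Theorem \ref{mainproj} does not contain Theorem \ref{pyz} by Pramanik--Yang--Zahl as a special case.'' So there is no ``paper's own proof'' of Theorem \ref{pyz} to compare your proposal against.

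It is worth saying why. Your Steps 1--2 (cinematic family, reduction to a curved Furstenberg estimate) are exactly the transference scheme the paper uses in higher dimensions. The divergence is at Step 3. The paper's engine is the elementary $L^2$/Cauchy--Schwarz route of Theorem \ref{maxfcn} and Theorem \ref{thmconfig}, which rests on the intersection bound $\calL^n(f^\dz\cap g^\dz)\lesssim\dz^2/\|f-g\|_{C^2}$ of Lemma \ref{dznbhd1}. That bound is obtained (in the tangent case) by a polar-coordinate computation in $\R^{n-1}$ that ends with the factor $\sqrt{\bar\lz}^{\,n-3}$, bounded only when $n\ge 3$; for functions of one variable the tangency can make the intersection as large as $\dz^{3/2}$, and the $L^2$ argument degenerates to exactly the non-sharp $\Hd E\le 2s$ bound you mention. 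In short, the paper's method is precisely the ``cruder $L^2$-energy method'' you discard in Step 2, and its success in $\R^{n+1}$ for $n\ge 3$ is the point of the paper; it is not designed to, and cannot, reach Theorem \ref{pyz}.

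Your Step 3 therefore lives entirely outside this paper. As a sketch it is in the right spirit (one must beat the tangency obstruction, and multi-scale arguments are the known way to do that), but be aware that the actual argument in \cite{2022arXiv220702259P} is not a hairbrush/parabolic-rescaling induction; if you pursue your outline, the point you flag --- that the rescaled curves remain uniformly cinematic and that the bookkeeping closes without losing a power of $\dz$ per step --- is genuinely the whole difficulty, and your sketch does not yet indicate how it is overcome.
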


Before Theorem \ref{pyz}, partial progress was made by F\"{a}ssler-Orponen \cite{FasslerOrponen14}, He \cite{MR4148151} and K\"{a}enm\"{a}ki-Orponen-Venieri \cite{marstrandtype}.
When $n > 3$, unlike the case in Theorem \ref{pyz} where the intersection of the one dimensional family of directions  and $S^2$ forms a $C^2$-curve in $S^2$, Gan-Guo-Wang \cite{gan2022restricted} showed the following theorem using decoupling.
\begin{thm}\label{ggw}
  Let $I \subset \R$ be a compact interval and $\gz:I\to \R^{n}$ be a $C^\infty$ curve that satisfies:
\begin{equation}\label{nondeg1}
    \text{span}\{ \gz(t), \dot \gz(t), \ddot \gz(t),\cdots,  \gz^{(n-1)}(t)\}=\R^n \text{  for all }  t \in I.
\end{equation}
Let $Z \subset \R^{n}$ be an analytic set. Then for almost every $t \in I$, $\Hd (\gz(t) \cdot Z) = \min\{\Hd Z, 1\}$.
\end{thm}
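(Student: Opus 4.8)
The plan is the classical Fourier-analytic reduction of a projection theorem to an $L^2$/decoupling estimate on a Frostman measure, with the curvature needed for the restricted family of directions supplied by the non-degeneracy hypothesis \eqref{nondeg1}. The inequalities $\dim(\gz(t)\cdot Z)\le\min\{\dim Z,1\}$ hold for every $t$ because $x\mapsto\gz(t)\cdot x$ is Lipschitz into $\R$, so only the matching lower bound is at stake; and it suffices to prove, for each fixed $\alpha<\min\{\dim Z,1\}$, that $\dim_{\mathrm H}(\gz(t)\cdot Z)\ge\alpha$ for a.e.\ $t\in I$. Fix $\beta$ with $\alpha<\beta<\min\{\dim Z,1\}$; by Frostman's lemma there is a compactly supported probability measure $\mu$ carried by $Z$ with $\mu(B(x,\rho))\le\rho^{\beta}$ for all $x\in\R^n$, $\rho>0$. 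Writing $P_t(x)=\gz(t)\cdot x$ and $\mu_t=(P_t)_*\mu$, it is then enough to show $\dim_{\mathrm H}\mu_t\ge\alpha$ for a.e.\ $t$, for which in turn it suffices that the integrated Riesz energy be finite:
\begin{equation*}
  \int_I I_\alpha(\mu_t)\,dt \;=\; c_\alpha\int_I\int_{\R}|\hat\mu(r\gz(t))|^{2}\,|r|^{\alpha-1}\,dr\,dt \;<\;\infty ,
\end{equation*}
where we used $\widehat{\mu_t}(r)=\hat\mu(r\gz(t))$. Splitting the $r$-integral into dyadic shells $|r|\sim R$, this reduces to the single-scale estimate
\begin{equation*}
  \mathcal A(R)\;:=\;\int_I\int_{|r|\sim R}|\hat\mu(r\gz(t))|^{2}\,dr\,dt\;\lesssim_{\beta,\epsilon}\;R^{\,1-\beta+\epsilon}\qquad(R\ge 1),
\end{equation*}
since then $\sum_{R=2^{k}}R^{\alpha-1}\mathcal A(R)\lesssim\sum_{k}2^{\,k(\alpha-\beta+\epsilon)}<\infty$ as soon as $\epsilon<\beta-\alpha$.

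The core of the matter is the single-scale estimate $\mathcal A(R)\lesssim_{\beta,\epsilon}R^{1-\beta+\epsilon}$, and this is where \eqref{nondeg1} enters. The map $(r,t)\mapsto r\gz(t)$ parametrizes a truncated two-dimensional cone $\mathcal C_R\subset\R^n$; by \eqref{nondeg1} each short sub-arc of $\gz$ is, after an affine change of coordinates, a small perturbation of the moment curve $(t,t^{2},\dots,t^{n})$, so $\mathcal C_R$ is a non-degenerate cone. After normalizing $\spt\mu$ to lie in a fixed ball, $\hat\mu$ varies only at unit scale, so $\mathcal A(R)$ is comparable — up to the Jacobian factor $|\partial_r(r\gz)\wedge\partial_t(r\gz)|\sim R$ — to $R^{-1}\int_{\mathcal C_R^{(1)}}|\hat\mu(\xi)|^{2}\,d\xi$, where $\mathcal C_R^{(1)}$ is the unit neighbourhood of $\mathcal C_R$. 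I would then apply the sharp $\ell^{2}$-decoupling theorem for the cone over a non-degenerate curve (Bourgain-Demeter-Guth decoupling, equivalently a Vinogradov-type mean value input) to split $\mathcal C_R^{(1)}$ into its canonical anisotropic plates $\tau$, of dimensions $\sim R\times R^{1-1/n}\times\cdots\times R^{1/n}$ and about $R^{1/n}$ in number, at the admissible cost $R^{\epsilon}$; and then estimate $\int_\tau|\hat\mu(\xi)|^{2}\,d\xi=\|\mu*K_\tau\|_{L^{2}}^{2}$ plate by plate, combining the dual-box geometry of $K_\tau$ with the Frostman bound for $\mu$ on boxes at the sub-scales $R^{-1},R^{-1/n},\dots$ dictated by $\tau^{*}$. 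Summing over $\tau$ and recombining the dyadic scales should yield the claimed exponent $1-\beta$.

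The step I expect to be the real obstacle is precisely this per-plate estimate: a naive use of decoupling followed by the trivial pointwise bound $|\hat\mu|\le 1$ on each plate is far too wasteful — it badly over-counts the contribution of the long directions of the plates — so one must exploit the Frostman condition \emph{simultaneously in all $n$ anisotropic directions} of $\tau$. Concretely, this amounts to feeding an $L^{2}$ flat/refined decoupling statement — tracking how the mass of $\mu$ spreads inside the dual boxes $\tau^{*}$ — into an induction on scales (or an interpolation between elementary $L^{2}$ orthogonality and the sharp $L^{p}$ decoupling exponent for the moment-curve cone), so that the gain propagates through every dyadic scale $1\le R$ up to the frequency cutoff. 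Making this propagation work is the genuinely new input of Gan-Guo-Wang; once it is in hand, the dyadic summation and the passage from $\mu$ back to $Z$ (letting $\alpha\uparrow\min\{\dim Z,1\}$ along a sequence, with $\beta$ chosen in between each time) are routine. In the base case $n=3$ one could alternatively substitute the high-low frequency decomposition of Pramanik-Yang-Zahl for the decoupling step, but for general $n$ the decoupling route above is the natural one.
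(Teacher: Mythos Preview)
The paper does not contain its own proof of this statement. Theorem~\ref{ggw} is quoted in the introduction as a result of Gan--Guo--Wang \cite{gan2022restricted}, with the single comment that they ``showed the following theorem using decoupling''; the paper then moves on to its own, different problem (codimension-two families of lines in $\R^{n+1}$). So there is no proof in the paper to compare your proposal against.

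That said, your outline is broadly consonant with what the paper attributes to Gan--Guo--Wang: a Fourier-analytic reduction to a single-scale $L^2$ estimate on the cone over $\gamma$, with the non-degeneracy hypothesis \eqref{nondeg1} feeding into a decoupling inequality for the moment-curve cone. You correctly identify the genuine difficulty --- that naive decoupling plus the trivial bound $|\hat\mu|\le 1$ loses too much, and one must track the Frostman condition through the anisotropic plate structure --- and you are candid that resolving this is ``the genuinely new input of Gan--Guo--Wang'' rather than something you have carried out. As a description of the shape of their argument your proposal is reasonable, but it is a sketch of someone else's proof rather than a comparison point for anything in the present paper, and the key step (the refined per-plate estimate and its propagation through scales) is explicitly left as a black box.
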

After \cite{gan2022restricted}, Zahl \cite{zahl2023maximal} reproved Theorem \ref{ggw} and in addition established the same exceptional estimate as in \eqref{Kauf2} for Theorem \ref{ggw}.

In this paper, we study the projection theorem to a class of $(n-2)$-dimensional families of lines in $\R^n$ for $n > 3$.
The main result is as follows.
\begin{thm}\label{mainproj}
Let $n\ge 3$ and $\Sigma \subset S^{n}$ be an $(n-1)$-dimensional $C^2$ manifold such that $\Sigma$ has sectional curvature $>1$ everywhere. Let $Z \subset \R^{n+1}$ be analytic and let $0 < s < \min\{\Hd Z, 1\}$. Then
\begin{equation*}
  \Hd \{z \in \Sigma : \Hd (Z \cdot z) < s\} \le (n-2)+s.
\end{equation*}
In particular,
for almost every $z \in \Sigma$, $\Hd (Z \cdot z) = \min\{\Hd Z, 1\}$.
\end{thm}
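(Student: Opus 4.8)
The plan is to follow the strategy pioneered by K\"aenm\"aki–Orponen–Venieri and developed by Pramanik–Yang–Zahl, Gan–Guo–Wang, and Zahl, namely to reduce the projection statement to a dimension lower bound for Furstenberg-type sets associated to a \emph{cinematic} family of functions of $n-1$ variables. First I would set up the parametrization: since $\Sigma\subset S^n$ is an $(n-1)$-dimensional $C^2$ manifold with sectional curvature $>1$, I can cover $\Sigma$ by finitely many coordinate charts, and on each chart write the projection map $z\mapsto (Z\cdot z)$ in terms of a family of functions $f_a(t)$, $t\in[0,1]^{n-1}$, where $a$ ranges over a parameter set built from the point configurations in $Z$. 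The curvature hypothesis $>1$ on $\Sigma$ should translate (as in Pramanik–Yang–Zahl in the curve case $n=3$, and in Sogge's cinematic curvature condition) into a quantitative non-degeneracy statement on this family: the functions together with their first derivatives separate points at the right rate, so that $\{f_a\}$ forms a cinematic family in $C^2([0,1]^{n-1})$. I would isolate this translation as a preliminary lemma ``curvature $\Rightarrow$ cinematic.''

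The second main step is a Furstenberg estimate. Given a set $A\subset[0,1]^{n-1}$ with $\dim A \ge (n-2)+s$ — the negation of the conclusion, after passing to charts — and a set of exceptional directions, one builds a Furstenberg set: a set in $\R\times[0,1]^{n-1}$ (or the appropriate graph space) that contains, for each $a$ in a large parameter set, a $t$-set of dimension at least $s$ lying near the graph of $f_a$. The heart of the matter is to prove that such a Furstenberg set of the cinematic family has Hausdorff dimension at least $(n-2)+2s$ — equivalently, a Kaufman-type bound — which by the usual exceptional-set/duality bookkeeping (Fubini over the $(n-2)$-dimensional transverse directions, then the one-variable restricted-projection mechanism in the remaining variable) yields the claimed bound $(n-2)+s$ on the exceptional set. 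I would prove the Furstenberg lower bound by the high–low / induction-on-scales decoupling machinery: discretize at scale $\delta$, use the cinematic (Sogge-type) curvature to get a decoupling inequality for the $\delta$-neighborhoods of the graphs $\{\mathrm{graph}(f_a)\}$, run a two-ends / broad-narrow reduction to control incidences between $\delta$-tubes and $\delta$-cubes, and combine with a trivial $L^2$ bound; the multivariable nature forces one to handle the $(n-1)$-parameter family, but the extra $n-2$ ``flat'' parameters only contribute additively, exactly producing the $(n-2)$ shift.

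I expect the main obstacle to be proving the decoupling/incidence estimate for the multivariable cinematic family with the sharp exponent — this is where the geometry genuinely enters. In the $n=3$ curve case Pramanik–Yang–Zahl already needed a delicate cinematic-curvature argument; here one must extend Sogge's cinematic curvature condition and the associated square-function/decoupling estimates from functions of one variable to functions of $n-1$ variables, and verify that the curvature-$>1$ hypothesis on $\Sigma$ supplies exactly the hypothesis needed. A secondary technical point is the reduction from $\Sigma\subset S^n$ (a manifold, not a curve) to the function family: one must check that the non-degeneracy is uniform over $\Sigma$ and compatible with the chart decomposition, and that the analytic-set hypothesis on $Z$ licenses the passage to a well-behaved parameter set $A$ (via a Frostman measure and a pigeonholing to a single scale). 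Once the cinematic Furstenberg bound $\dim \ge (n-2)+2s$ is in hand, the deduction of Theorem~\ref{mainproj} — and, by the same token, a reproof of Mattila's Theorem~\ref{fundproj} — is a routine adaptation of the K\"aenm\"aki–Orponen–Venieri argument, so I would relegate that bookkeeping to a short final section.
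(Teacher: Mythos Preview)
Your high-level reduction is the same as the paper's: parametrize $\Sigma$ by a chart, set $f_z(x)=\langle \Sigma(x),z\rangle$, verify that the curvature hypothesis makes $\{f_z\}_{z\in Z}$ a cinematic family in $C^2([0,1]^{n-1})$ (the paper's Lemma~\ref{cineprop}), and then deduce the exceptional-set bound from a dimension lower bound for the associated Furstenberg set of hypersurfaces. Your ``curvature $\Rightarrow$ cinematic'' lemma is exactly right, and the bookkeeping that converts the Furstenberg bound into \eqref{Kauf} is indeed routine (Section~5 of the paper).

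Where you diverge from the paper is in how you intend to prove the Furstenberg lower bound. You propose decoupling, broad--narrow, two-ends, and induction on scales; the paper uses none of this. Its key technical input is the elementary measure bound
\[
\calL^n(f^\delta\cap g^\delta)\ \lesssim\ \frac{\delta^2}{\|f-g\|_{C^2}}
\]
(Lemma~\ref{dznbhd1}), proved by pure geometry: the cinematic condition forces $h=f-g$ to be either uniformly large, or have uniformly large gradient, or be strictly convex/concave on each small cube (Corollary~\ref{uniq}); in the convex/concave case one slices in polar coordinates about the unique critical point of $h$ and reduces to one-variable calculus (Lemma~\ref{pyz1}). This single intersection estimate is then summed via a Cauchy--Schwarz/Cordoba argument (Lemma~\ref{csh}, Theorem~\ref{thmconfig}) to give the Furstenberg bound. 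The paper's route is considerably lighter than decoupling, needs only $C^2$ regularity, and gives the sharp Kaufman exponent directly; the decoupling route you sketch would need substantially more smoothness, and a multivariable cinematic decoupling theorem is not available off the shelf---you flag this yourself as the main obstacle, but do not indicate how to overcome it.

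Two smaller points. First, in your Furstenberg set each graph should carry an $(n-2+s)$-dimensional piece (the graph over the $(n-2+s)$-dimensional exceptional set in $[0,1]^{n-1}$), not an $s$-dimensional one; the target bound is then $n-2+s+\min\{t,s\}$ with $t=\Hd Z$, which collapses to $(n-2)+2s$ only when $t\ge s$. Second, your parenthetical ``Fubini over the $(n-2)$ transverse directions, then the one-variable mechanism'' is a route the paper explicitly warns against: since $\Sigma$ is only $C^2$, it need not foliate into curves to which Theorem~\ref{pyz} or Theorem~\ref{ggw} applies, so a genuine multivariable argument (as in Lemma~\ref{dznbhd1}) is required.
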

Since the regularity assumption of the $(n-1)$-dimensional manifold $\Sigma$ is $C^2$ in Theorem \ref{mainproj}, $\Sigma$ may not be foliated by an $(n-2)$-dimensional family of $C^\fz$ curves. Thus Theorem \ref{ggw} does not imply Theorem \ref{mainproj}. We also remark that our technique only works for $n \ge 3$ in Theorem \ref{mainproj} (we refer to the proof of Lemma \ref{dznbhd1}), thus Theorem \ref{mainproj} does not contain Theorem \ref{pyz} by Pramanik-Yang-Zahl as a special case.

\begin{ex} \label{eg1}\rm
   The typical examples of $\Sigma$ in Theorem \ref{mainproj} are $\Sigma_c: = S^{n} \cap \{x_{n+1}=c\}$ for $c \in (0,1)\cup(-1,0)$. Indeed, $\Sigma_c$ is an $(n-1)$-dimensional sphere in $\R^{n+1}$ with radius $\sqrt{1-c^2}$ and thus has constant sectional curvature $1/(1-c^2) >1$.
\end{ex}

Our method to show Theorem \ref{mainproj} can also give a new proof for the Mattila's projection theorem, i.e. Theorem \ref{fundproj} in $\R^n$ for $n\ge 3$. Indeed, we will show the following more general version of Theorem \ref{fundproj}.

\begin{thm}\label{mattila}
Let $n\ge 3$, and $\Sigma \subset S^{n-1}$ be an $(n-1)$-dimensional manifold. Let $Z \subset \R^{n}$ be analytic and $0 < s < \min\{\Hd Z, 1\}$. Then
\begin{equation*}
  \Hd \{z \in \Sigma : \Hd (Z \cdot z) < s\} \le (n-2)+s.
\end{equation*}
In particular,
for almost every $z \in \Sigma$, $\Hd (Z \cdot z) = \min\{\Hd Z, 1\}$.
\end{thm}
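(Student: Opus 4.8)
The plan is to reduce Theorem \ref{mattila} to the case $\Sigma = S^{n-1}$, which is precisely the Kaufman–Mattila exceptional-set bound \eqref{Kauf1}, and then to observe that the new proof strategy developed for Theorem \ref{mainproj} specializes to give an independent argument. For the reduction itself: since $\Sigma \subset S^{n-1}$ is an $(n-1)$-dimensional manifold and $\dim S^{n-1} = n-1$, the set $\Sigma$ is relatively open in $S^{n-1}$, so $\mathcal H^{n-1}$-almost every point of $S^{n-1}$ lying in the exceptional set for $\Sigma$ also lies in the exceptional set for $S^{n-1}$; conversely $\Sigma$ carries full $\mathcal H^{n-1}$-measure locally. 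Thus it suffices to prove $\Hd\{z \in S^{n-1} : \Hd(Z\cdot z) < s\} \le (n-2)+s$, which is exactly \eqref{Kauf1}. So strictly speaking Theorem \ref{mattila} follows from Theorem \ref{fundproj}; the point of the theorem is that the machinery built for Theorem \ref{mainproj} yields a self-contained proof, and I would present that.

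The self-contained route mirrors the Kaenmäki–Orponen–Venieri transference. First I would fix a small coordinate chart and write directions $z = z(t) \in S^{n-1}$ as a graph over $(n-1)$ parameters $t \in [0,1]^{n-1}$, so that the projection $x \mapsto x\cdot z(t)$ becomes, after an affine change of variables in the target, a family of functions $\pi_t(x) = f_x(t)$ where $x \mapsto f_x$ lands in a \emph{cinematic family} inside $C^2([0,1]^{n-1})$ (the multivariable cinematic functions referenced in the introduction). The key structural input is that the nondegeneracy of the sphere — here automatic, since $S^{n-1}$ has everywhere positive sectional curvature — guarantees the cinematic (curvature/transversality) condition for this family. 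Next, by a standard pigeonholing and the potential-theoretic characterization of Hausdorff dimension, I would suppose for contradiction that the exceptional set $E = \{z \in \Sigma : \Hd(Z\cdot z) < s\}$ has dimension $> (n-2)+s$, choose a measure $\nu$ on (the parameter image of) $E$ with the matching Frostman exponent and a measure $\mu$ on $Z$ with exponent slightly below $\Hd Z$, and show this forces a Furstenberg-type set of the cinematic family that is too large — contradicting the dimension lower bound for Furstenberg sets of cinematic families that the paper establishes. Concretely, the bad directions produce, for each $z \in E$, a line's worth of $Z$ collapsing into a set of dimension $< s$ in $\R$, which translates to $\mu$-typical points $x$ whose graphs $f_x$ pass through a common "low-dimensional" exceptional fiber structure indexed by $E$; the Furstenberg bound says such a configuration forces $\dim E \le (n-2) + s$.

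The main obstacle, and the step I would spend the most care on, is verifying that the multivariable projection family genuinely satisfies the cinematic condition uniformly on the chart — i.e. extracting from "sectional curvature of $S^{n-1}$" the right quantitative second-order nondegeneracy (a Hessian nondegeneracy/transversality statement) for the maps $t \mapsto f_x(t)$, uniformly in $x$ in a compact piece of $Z$. For $\Sigma = S^{n-1}$ this is cleaner than in Theorem \ref{mainproj} because the sphere in $\R^n$ is maximally symmetric, but one still must check the estimate does not degenerate near the boundary of the chart and must patch finitely many charts together (using countable stability of Hausdorff dimension and of the exceptional-set bound). A secondary technical point is the passage from the analytic set $Z$ to a compactly supported Frostman measure of the correct exponent, and ensuring the "small dimension in the target" hypothesis $s < \min\{\Hd Z, 1\}$ is used exactly where the Furstenberg estimate requires $0 < s < 1$. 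Once the cinematic condition is in hand, the rest is the same Furstenberg-to-projection dictionary used for Theorem \ref{mainproj}, now with $n$ in place of $n+1$ and with the curvature hypothesis satisfied for free.
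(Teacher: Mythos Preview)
Your overall architecture matches the paper's: parametrise a chart of $\Sigma$ by $[0,1]^{n-1}$, form the family $F=\{f_z(x)=\langle \Sigma(x),z\rangle\}_{z\in Z}$, verify that $F$ is a cinematic family in the sense of Definition~\ref{cine}, and then feed this into the $\delta$-discretised Furstenberg machinery (Theorem~\ref{thmconfig}) exactly as in the proof of Theorem~\ref{mainproj2}. The reduction-to-$S^{n-1}$ paragraph you begin with is, as you yourself note, circular for the purpose of a \emph{new} proof, so only the second half of your proposal is doing work.

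Where you diverge from the paper is in the verification of the cinematic condition, and here your plan is aimed at the wrong target. You propose to extract a ``quantitative second-order nondegeneracy (a Hessian nondegeneracy/transversality statement)'' from the sectional curvature of $S^{n-1}$. But in this codimension-zero situation the second-order term in Definition~\ref{cine}(3) is \emph{not needed at all}, and in fact carries no independent information. Writing $w=\Sigma(x)$, $h=f_y-f_z$, and $\tilde h(w)=(y-z)\cdot w$, one has $\nabla\tilde h(w)=\sum_i[(y-z)\cdot e_i]e_i$ for an orthonormal frame $e_1,\ldots,e_{n-1}$ of $T_w\Sigma$. Since $\Sigma$ is open in $S^{n-1}$, the vectors $e_0:=w,e_1,\ldots,e_{n-1}$ already span $\R^n$, so some inner product $(y-z)\cdot e_i$ must have size $\ge n^{-1}|y-z|$; this immediately gives
\[
\inf_{x}\bigl\{|h(x)|+|\nabla h(x)|\bigr\}\gtrsim |y-z|,
\]
which is strictly stronger than condition~(3). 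No curvature enters. The paper singles out exactly this simplification: the Hessian term was needed in Lemma~\ref{cineprop} only because there $\Sigma\subset S^n$ had codimension one, producing an extra normal direction $\nu$ whose component of $y-z$ had to be captured by the principal-curvature hypothesis. If you actually tried to run your proposed Hessian argument here you would find that $\nabla_\xi\nabla_\xi \tilde h(w)=(y-z)\cdot\nabla_\xi\xi$ is, modulo tangential terms already controlled by $|\nabla h|$, just $-(y-z)\cdot w=-\tilde h(w)$ (the second fundamental form of $S^{n-1}$ in $\R^n$ points along $-w$), so it cannot supply an independent lower bound. Thus the step you flag as ``the main obstacle'' is a non-obstacle, and the argument is shorter than you anticipate.
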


Obviously, by taking $\Sigma=S^{n-1}$ in Theorem \ref{mattila}, we obtain  Theorem \ref{fundproj}.
Moreover, one may observe that since in $\R^n$ with $n\ge 4$, $S^{n-1}$ can be foliated by the family $\{\Sigma_c: = S^{n-1} \cap \{x_{n}=c\}\}_{c \in [-1,1]}$,
according to Example \ref{eg1}, applying Theorem \ref{mainproj} to each $\Sigma_c$ (except $c=-1,0,1$), we will deduce Theorem \ref{fundproj} from Theorem \ref{mainproj}. However, this way could only show Theorem \ref{fundproj} for $\R^n$ with $n\ge 4$. Indeed, by a more intrinsic observation, in this paper, we will use the method to show Theorem \ref{mainproj} to prove Theorem \ref{mattila}, hence Theorem  \ref{fundproj} in $\R^n$ for all $n\ge 3$.

\begin{remark} \label{sec}\rm
  We remark that by Gauss' equation, for any $\Sigma \subset S^n$, we have $$ K_{\xi_i,\xi_j}(x) = \kz_{i}(x) \kz_{j}(x) + 1 ,\quad x \in \Sigma  $$
  where $\{\xi_i\}_{1 \le i\le n-1} \subset T_x\Sigma$ are the $n-1$ principal directions,
  $K_{\xi_i,\xi_j}(x)$ is the sectional curvature of $\Sigma$ at $x$ spanned by $\xi_i$ and $\xi_j$ and $\kz_{i}(x)$ is the principal curvature in the direction $\xi_i$. Thus the sectional curvature condition in Theorem \ref{mainproj} is equivalent to that the $n-1$ principal curvatures at each $x \in \Sigma$, $\kappa_1(x), \cdots, \kappa_{n-1}(x)$ are non-vanishing and have same sign.
\end{remark}

Regarding Remark \ref{sec},
the curvature condition in Theorem \ref{mainproj} can be seen as a natural generalization of Theorem \ref{pyz}. Indeed, the ``non-degenrate'' condition \eqref{nondeg0} of $\gz$ is equivalent to that $\gz$ has non-vanishing geodesic curvature $k_g$. We explain this below.
Let $\gz \subset S^2$ satisfy \eqref{nondeg0}. We assume $\gz$ is parameterised by arclength, i.e. $|\dot \gz| \equiv 1$ for simplicity.
For each $t \in I$, let $\nu =\nu(t) \in T_{\gz(t)}S^2$ be the unique (up to $\pm1$) normal of $\gz$, i.e. $\dot\gz(t) \perp \nu$.
 Noting that $\gz(t) \perp T_{\gz(t)}S^2$, we know $\{\gz(t),\dot \gz(t), \nu\}$ forms an orthonormal basis of $\R^3$. Since $|\dot \gz| \equiv 1$ implies that
$\dot \gz(t) \perp \ddot \gz(t)$ for all $t \in I$, we know
$\ddot \gz(t) = k_n(t)\gz(t) + k_g(t)\nu$ for a unique pair $k_n(t), k_g(t) \in \R$ where $k_n(t)$ is the normal curvature and $k_g(t)$ is the geodesic curvature at $\gz(t)$.
By condition \eqref{nondeg0}, we know $k_g(t) \ne 0$. For $n\ge3$ and any hypersurface in $S^n$, the corresponding notion of $k_g$ naturally generalizes to principle curvatures.

Inspired by the above relation between ``non-degenerate'' condition and curvatures, in higher dimensions, we also introduce a ``non-degenerate'' condition corresponding to the curvature condition in Theorem \ref{mainproj}.

\begin{definition}[``Non-degenerate'' Condition for $(n-1)$-dimensional Manifolds in $\R^{n+1}$] \label{nond1}
  Let $n\ge 3$ and $\Sigma \subset \R^{n+1}$ be an $(n-1)$-dimensional $C^2$ manifold. We say $\Sigma$ satisfies the ``non-degenerate'' condition if
  \begin{enumerate}
    \item [(i)]there exists a $C^2$ vector field $\nu: \Sigma \to \R^{n+1}$ such that
        $$ \nu(x)\perp x , \ \nu(x) \perp T_{x}\Sigma   \text{ and span}\{ x, T_{x}\Sigma , \nu(x)\}=\R^{n+1} \text{  for all }  x \in \Sigma; $$
    \item [(ii)]for any $x \in \Sigma$ and any pair of $C^2$-curves $\gz_1,\gz_2: [-1,1] \to \Sigma$ such that $\gz_i(0)=x$ and $\dot\gz_i(t) \ne 0$ for all $t \in [-1,1]$ and $i=1,2$,  it holds
  $$   \langle\ddot\gz_1(0),\nu(x)\rangle \langle\ddot\gz_2(0),\nu(x)\rangle >0 .  $$
  \end{enumerate}

\end{definition}

We will prove the following restricted projection theorem for ``non-degenerate'' manifolds.

\begin{thm}\label{mainproj2}
Let $n\ge 3$ and $\Sigma \subset \R^{n+1}$ be an $(n-1)$-dimensional $C^2$ manifold satisfying the ``non-degenerate'' condition. Let $Z \subset \R^{n+1}$ be analytic and let $0 < s < \min\{\Hd Z, 1\}$. Then
\begin{equation}\label{Kauf}
  \Hd \{z \in \Sigma : \Hd (Z \cdot z) < s\} \le (n-2)+s.
\end{equation}
In particular,
for almost every $z \in \Sigma$, $\Hd (Z \cdot z) = \min\{\Hd Z, 1\}$.
\end{thm}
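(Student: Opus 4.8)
The plan is to follow the transference principle of K\"{a}enm\"{a}ki-Orponen-Venieri: attach to $\Sigma$ a \emph{cinematic} family of functions of $n-1$ variables, reduce Theorem~\ref{mainproj2} to a Hausdorff-dimension \emph{lower} bound for Furstenberg-type sets of that family, and prove that bound by extending the one-variable cinematic machinery of Pramanik-Yang-Zahl and Sogge to several variables. To set up the family, cover $\Sigma$ by finitely many charts and on each write $\Sigma=\psi(Q)$ for a $C^2$ embedding $\psi\colon Q=[0,1]^{n-1}\to\R^{n+1}$; for $x\in\R^{n+1}$ put $f_x(u):=x\cdot\psi(u)$, so that $Z\cdot\psi(u)=\{f_x(u):x\in Z\}$ and $\calF_\Sigma:=\{f_x\}\subset C^2(Q)$ is a family $C^2$-parametrized by (a bounded piece of) $\R^{n+1}$. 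One checks $\calF_\Sigma$ is a cinematic family using Definition~\ref{nond1}: part (i) --- that $\psi(u),\partial_1\psi(u),\dots,\partial_{n-1}\psi(u)$ are everywhere linearly independent --- says precisely that distinct parameters have quantitatively separated $1$-jets at every $u$; and part (ii) provides the \emph{definite curvature} demanded by the cinematic framework, because if the $1$-jets of $f_x,f_{x'}$ agree at some $u_0$ then (forced by (i)) $w:=x-x'$ is parallel to $\nu(\psi(u_0))$, and the Hessian of $f_x-f_{x'}$ at $u_0$ is then $\bigl(w\cdot\partial_{ij}\psi(u_0)\bigr)_{ij}$, i.e.\ $|w|$ times the second fundamental form of $\Sigma$ at $\psi(u_0)$ with respect to $\nu$, which by (ii) --- equivalently, by Remark~\ref{sec}, by the sign condition on the principal curvatures --- is definite.

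\textbf{Transference to a Furstenberg estimate.} Fix $s<\sigma_0<\min\{\Hd Z,1\}$ and suppose, for contradiction, that $E:=\{z\in\Sigma:\Hd(Z\cdot z)<s\}$ has $\Hd E>(n-2)+s$; fix $\tau\in\bigl((n-2)+s,\Hd E\bigr)$. Localizing $E$ inside one chart, transport a $\tau$-Frostman measure $\nu$ onto $E'=\psi^{-1}(E)\subset Q$, and --- here the analyticity of $Z$ enters --- fix a $\sigma_0$-Frostman measure $\mu$ on a compact subset of $Z$. Consider
\[
F\ :=\ \bigcup_{x\in\spt\mu}\operatorname{graph}\bigl(f_x|_{E'}\bigr)\ \subset\ Q\times\R .
\]
Because $\Hd(\psi(E'))=\Hd E'=\tau>n-2$ while the definite curvature keeps $\dim(\Sigma\cap H)\le n-2$ for every affine hyperplane $H\subset\R^{n+1}$, the set $\psi(E')$ spans $\R^{n+1}$, so $x\mapsto f_x|_{E'}$ is bi-Lipschitz; since each $\operatorname{graph}(f_x|_{E'})$ is a bi-Lipschitz copy of $E'$, this exhibits $F$ as an $(\sigma_0,\tau)$-Furstenberg set of $\calF_\Sigma$: a $\sigma_0$-dimensional family of graphs, each meeting $F$ in a set of dimension $\tau$. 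On the other hand the fibre of $F$ above each $u\in E'$ lies in $Z\cdot\psi(u)$, of dimension $<s$, so --- working with Hausdorff content, as is customary here --- $\Hd F\le\tau+s$. Granting the Furstenberg lower bound $\Hd F\ge\tau+\min\{\sigma_0,1\}=\tau+\sigma_0$ established below, we get $\tau+\sigma_0\le\tau+s$, contradicting $\sigma_0>s$. Hence $\Hd E\le(n-2)+s$, and letting $s\uparrow\min\{\Hd Z,1\}$ yields the ``almost every'' statement. (This is a Furstenberg-set incarnation of the classical Kaufman argument: the slabs $\{u\in E':|(x-y)\cdot\psi(u)|\le\delta|x-y|\}$ arising when one expands the energy integral $\int_{E'}I_s(\pi_{\psi(u)\ast}\mu)\,d\nu(u)$, with $\pi_z(x):=x\cdot z$, are exactly dual to the graphs assembling $F$.)

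\textbf{The Furstenberg lower bound.} The crux is to prove that, for the multivariable cinematic family $\calF_\Sigma\subset C^2(Q)$, every $(\gamma,\beta)$-Furstenberg set $F\subset Q\times\R$ satisfies $\Hd F\ge\beta+\min\{\gamma,1\}$. Since affine functions form such a family, this contains Furstenberg sets of affine $(n-1)$-planes and hence sharpens the estimates of H\'{e}ra, H\'{e}ra-Keleti-M\'{a}th\'{e} and D{\k{a}}browski-Orponen-Villa. I would prove it by a $\delta$-discretized incidence estimate carried out by induction on scales: the cinematic structure provides, at each scale, a uniform entropy/separation bound --- two graphs whose $1$-jets are $\delta$-close on a $\delta$-ball diverge by $\gtrsim\rho^2$ at distance $\rho$ from that ball, because the difference of their Hessians is a definite form (the computation of the first part) --- which feeds a bush / two-ends argument lower-bounding the number of $\delta$-cubes of $F$, and the recursion upgrades the $\delta$-scale count to the Hausdorff bound (everything done with Hausdorff content). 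The genuinely new difficulty relative to the one-variable theory of Pramanik-Yang-Zahl and Sogge is that here the graphs are hypersurfaces interacting in a full neighbourhood of $Q$ rather than curves over an interval; the plan is to reduce to the one-variable case by slicing $Q$ with generic affine lines --- on which $\calF_\Sigma$ restricts to a one-variable cinematic family --- and then patching the slices, and it is exactly this slicing that forces $\dim Q=n-1\ge2$, i.e.\ $n\ge3$ (cf.\ Lemma~\ref{dznbhd1}). This Furstenberg estimate is the main obstacle; the quantitative curvature/transversality package from the first part is the input without which the induction on scales cannot be run.
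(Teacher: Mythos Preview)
Your overall architecture---pass to a cinematic family $\{f_x\}$, transfer the exceptional-set estimate to a Furstenberg-type lower bound, and prove that bound by a discretized incidence argument---is exactly the paper's. The verification that $\calF_\Sigma$ is cinematic via Definition~\ref{nond1} is also correct in outline and matches Lemma~\ref{cineprop}. But two steps of your transference argument do not go through as written.

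\textbf{The upper bound $\Hd F\le\tau+s$ is the gap.} You argue: the fibre of $F$ over each $u\in E'$ sits inside $Z\cdot\psi(u)$, which has dimension $<s$; hence $\Hd F\le\tau+s$. This is a Fubini inequality for Hausdorff dimension, and it is \emph{false} in general: a set in $E'\times\R$ with $0$-dimensional fibres over a $1$-dimensional base can have dimension $2$ (think of the graph of a rough function). The phrase ``working with Hausdorff content, as is customary here'' does not rescue it; content only gives $\calH^{\tau+s}_\infty(F)\lesssim\int\calH^s_\infty(F_u)\,d\calH^\tau(u)$ under uniformity hypotheses you do not have. The paper flags exactly this point (see the paragraph after \eqref{heu2}) and avoids it by \emph{not} passing through the continuous Furstenberg theorem at all. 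Instead it argues by contradiction at a single dyadic scale: double-pigeonhole to find $\delta=2^{-k_1}$ and a set $Y\subset E'$ so that each $\Sigma(x)\cdot Z$, $x\in Y$, is covered by $\le\delta^{-s}$ intervals of length $\delta$, build from this a $(\delta,s',s',\delta^{-\epsilon})_{n-1}$-configuration, and apply the $\delta$-discretized estimate (Theorem~\ref{thmconfig}) directly. The contradiction is then a comparison of $\delta$-covering numbers, where Fubini \emph{is} available. Your argument can be repaired along these lines, but not by the sentence you wrote.

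\textbf{The Furstenberg lower bound you claim is too strong.} You assert $\Hd F\ge\beta+\min\{\gamma,1\}$ for every $(\gamma,\beta)$-Furstenberg set of $\calF_\Sigma$. The paper proves only $\Hd F\ge\beta+\min\{\gamma,\beta-(n-2)\}$ (Theorem~\ref{dimf}, with $\beta=n-2+s$), and this is what the $L^2$/C\'ordoba method actually delivers: the key intersection estimate (Lemma~\ref{shape}) loses a factor $\|f-g\|^{-s}$, not $\|f-g\|^{-1}$, which after summing over dyadic shells yields the $\min\{\gamma,s\}$ rather than $\min\{\gamma,1\}$. Your stronger bound is not known even for affine $(n-1)$-planes. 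Fortunately the weaker bound suffices: choose $\tau=n-2+s'$ and $\sigma_0$ with $s<\sigma_0<s'$, and the paper's bound gives $\Hd F\ge\tau+\sigma_0>\tau+s$. Finally, the mechanism by which $n\ge3$ enters is not ``slicing by generic affine lines'': in the tangent case of Lemma~\ref{dznbhd1} one slices by rays emanating from the unique critical point of $f-g$ and integrates in polar coordinates, where the Jacobian factor $r^{n-2}$ exactly compensates the loss in the one-variable estimate only when $n\ge3$.
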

We will show that Theorem \ref{mainproj2} includes Theorem \ref{mainproj} as a special case and moreover, Theorem \ref{mainproj2} also contains the result of \cite[Theorem 1.1]{ohm2023projection} as a special case. Hence the rest of the paper is devoted to showing Theorem \ref{mainproj2}.
We outline the ideas of the proof.

\subsection{Restricted projections, cinematic families and Furstenberg sets}
Let $Z$ and $\Sigma$ be as in Theorem \ref{mainproj2}. By proving the theorem on each chart of $\overline \Sigma$ (the closure of $\Sigma$), we can assume $\Sigma \subset \R^{n+1}$ as a $C^2$ diffeomorphism from $[0,1]^{n-1}$ to $\R^{n+1}$ and therefore throughout the paper we assume $\Sigma \in C^2([0,1]^{n-1}, \R^{n+1})$ to be a $C^2$ diffeomorphism.
For each $z \in Z$, define
$$  f_z(x):=\langle \Sigma(x) , z   \rangle  , \quad x \in [0,1]^{n-1}. $$
Then we obtain a family $F=\{f_z \}_{z \in Z} \subset C^2([0,1]^{n-1})$.
Note that for each $x \in [0,1]^{n-1}$, the intersection
$$ \bigcup_{z \in Z} [(\{x\} \times \R) \cap \mbox{graph}(f_z)]= [\{x\} \times \R] \cap [\bigcup_{z \in Z} (x,f_z(x))] \subset \R^n $$
is the projection of $Z$ to span$\{\Sigma(x)\}$ (up to scaling), i.e. $\langle \Sigma(x) , Z   \rangle$. Thus the study of the $(n-1)$-dimensional projections can be transferred to the study of the dimension lower bound of the union of
  graphs of the functions in $F$. Correspondingly, the proof of exceptional estimate \eqref{Kauf} can also be transferred to that of the dimension lower bound of the union of
  certain subsets contained in the graphs of the functions in $F$.
 This idea was first discovered by K\"{a}enm\"{a}ki-Orponen-Venieri \cite{marstrandtype} to study restricted projections in $\R^3$ and then employed by Pramanik-Yang-Zahl to show Theorem \ref{pyz}.

We depict the heuristic idea of the proof in more details. Fix $0<s<s' \le \Hd Z$ and choose any subset $X \subset [0,1]^{n-1}$ with $\Hd X = n-2+ s'$.
Then the fact that Theorem \ref{mainproj2} holds is equivalent to that
\begin{equation}\label{heu1}
  \Hd (\langle \Sigma(x) , Z   \rangle) = \Hd (\bigcup_{z \in Z} [(\{x\} \times \R) \cap \mbox{graph}(f_z)]) \ge s, \quad \calH^{n-2+s'}\mbox{-a.e. } x \in X.
\end{equation}
Consider the set
\begin{equation}\label{heu3}
    E: =\bigcup_{x \in X}\bigcup_{z \in Z} [(\{x\} \times \R) \cap \mbox{graph}(f_z)] = \bigcup_{z \in Z} [(X \times \R) \cap \mbox{graph}(f_z)] \subset \R^{n}.
\end{equation}
We will further verify \eqref{heu1} by showing that $E$ has dimension lower bound
\begin{equation}\label{heu2}
  \Hd E \ge \Hd X +s = n-2+s' +s.
\end{equation}
Note that there is no Fubini's theorem for Hausdorff measures, so instead, we will use a standard $\dz$-discretization argument to show \eqref{heu2} implies \eqref{heu1}. This is done in Section 5.

To see \eqref{heu2},
note that $E$ can be seen as a union of $(n-2+s')$-dimensional subsets from a family of graphs of functions. In general, \eqref{heu2} can fail for an arbitrary family of graphs of functions. However,
under the ``non-degenerate'' condition for $\Sigma$, we will show that $F=\{f_z\}_{z \in Z}$ forms a cinematic family in the following sense and \eqref{heu2} holds for graphs of a cinematic family.

\begin{definition}[Cinematic Family]\label{cine}
  Let $F \subset C^2(U)$ where $U \subset \R^k$ is a domain, $k \in \mathbb{N}$. We say $F$ is a cinematic family of
functions, with cinematic constant $K$, doubling constant $D$ and modulus of continuity $\az$ if the following conditions
hold.
\begin{enumerate}
  \item $F$ is contained in a ball of diameter $K$ under the usual metric on $C^2(U)$.
  \item $F$ is a doubling metric space, with doubling constant at most $D$.
  \item For all $f, g \in F$ and all $(x,\xi) \in U \times S^{k-1}$, we have
  $$  \inf_{x \in U} \{ |f(x) - g(x)| + |\nabla f(x) -\nabla g(x)| + |\nabla_\xi\nabla_\xi f(x) - \nabla_\xi\nabla_\xi g(x)| \} \ge K^{-1} \|f-g\|_{C^2(U)}.  $$
  \item There exists an increasing function $\az \in C^0([0,1], [0,+\fz))$ with $\az(0)=0$ and $0<\az(s)\le K^{-1}s$ for all $s \in [0,1]$ such that for all $f, g \in F$ and all $\xi \in S^{k-1}$, we have for any $\eta>0$ sufficiently small and $x,y \in U$ with $|x-y|\le \az(\eta)$,
  $$   |\nabla_\xi\nabla_\xi (f-g)(x) - \nabla_\xi\nabla_\xi (f-g)(y)| \le \eta. $$
\end{enumerate}
We call $\az$ as the modulus of continuity of $F$ (which is actually  the inverse function of the standard modulus of continuity).
\end{definition}


Definition \ref{cine} can be seen as a natural generalization to functions of multi-variables from a cinematic family of one variable in \cite[Definition 1.6]{2022arXiv220702259P} and furthermore in \cite{MR1098614} by Sogge where the name cinematic comes from. We also remark that the condition (4) in Definition \ref{cine} is not required in the definition for cinematic family of one variable in \cite{2022arXiv220702259P} (see the paragraph after Proposition 2.1 therein). This is because the paper \cite{2022arXiv220702259P} focuses on the specific cinematic family arising from the restricted projection which naturally enjoys condition (4).

With the notion of cinematic family, we define Furstenberg sets of hypersurfaces, which contains the set $E$ in \eqref{heu3} as a special case.

\begin{definition}[Furstenberg sets of hypersurfaces] \label{furcine}
    Let $s,t>0$ and $F \subset C^2([0,1]^{n-1},[0,1])$ be a cinematic family with
cinematic constant $K$, doubling constant $D$ and modulus of continuity $\az$. A set $E \subset \R^{n}$ is called a $(s,t)_{n-1}$-Furstenberg set of
hypersurfaces with parameter set $F$ if
$\Hd (F ) \ge t $ and $\Hd (E \cap \mbox{graph} (f)) \ge s$ for each $f \in F$. Here, $\Hd (F )$ is the Hausdorff dimension with respect to the distance induced by the $C^2$-norm in $C^2([0,1]^{n-1},[0,1])$.
\end{definition}

Definition \ref{furcine} can be seen as a higher dimensional extension of circular Furstenberg sets studied in \cite{2022arXiv220401770L,fassler2023hausdorff} and Furstenberg sets of curves studied in \cite{zahl2023maximal}. The reason we call $E$ a Furstenberg set of
hypersurfaces is because each graph of a function in the parameter set $F$ is a hypersurface in $\R^n$.

Definition \ref{furcine} is also a generalization of the affine Furstenberg-type sets studied by H\'{e}ra \cite{MR3973547}, H\'{e}ra-Keleti-M\'{a}th\'{e} \cite{MR4002667} and D{\k{a}}browski-Orponen-Villa \cite{MR4452675} where the graphs from the parameter set $F$ are hyperplanes, i.e. $F$ consists of affine functions. Indeed, it is not hard to check that if $f,g \in C^2([0,1]^{n-1},[0,1])$ are two affine functions, then
$$  \inf_{x \in [0,1]^{n-1}} \{ |f(x) - g(x)| + |\nabla f(x) -\nabla g(x)| \} \ge K^{-1} \|f-g\|_{C^2([0,1]^{n-1})}, $$
for some $K>1$, which shows the family $F$ of affine functions are a cinematic family.
In Section 4, we will show the following dimension bound for  Furstenberg sets of hypersurfaces.

\begin{thm} \label{dimf}
  Let $n\ge3$ and $0<s,t \le 1$. Then every $(n-2+s,t)_{n-1}$-Furstenberg set $E$ satisfies
  \begin{equation}\label{sharp}
    \Hd (E) \ge n-2+s+\min\{t, s \}.
  \end{equation}
\end{thm}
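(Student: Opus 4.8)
The plan is to run a $\delta$-discretized, single-scale induction-on-scales argument in the spirit of the Furstenberg-set estimates of Hera--Keleti--M\'ath\'e and D\c{a}browski--Orponen--Villa, but adapted to graphs of a cinematic family instead of affine subspaces. First I would reduce \eqref{sharp} to a $\delta$-discrete statement: fix a small $\delta>0$, cover $F$ by a $\delta$-separated family $\{f_j\}_{j\in J}$ with $\#J \gtrsim \delta^{-t}$ (using $\Hd(F)\ge t$ and Frostman), and for each $j$ choose a $\delta$-separated set $P_j \subset \mathrm{graph}(f_j)$ with $\#P_j \gtrsim \delta^{-(n-2+s)}$ coming from a Frostman measure on $E\cap\mathrm{graph}(f_j)$. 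Then $\bigcup_j P_j$ is (after pigeonholing into a uniform subfamily) essentially a $\delta$-net of $E$ of the appropriate cardinality, and it suffices to show that the number of $\delta$-balls it occupies is $\gtrsim \delta^{-(n-2+s+\min\{t,s\})}$.

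The key geometric input is the cinematic condition, specifically items (3) and (4) of Definition \ref{cine}, which quantify how fast two graphs separate: if $\|f-g\|_{C^2}\approx \rho$, then at most a controlled number of the $\delta$-cubes on $\mathrm{graph}(f)$ can lie $\delta$-close to $\mathrm{graph}(g)$, with the bound improving as $\rho$ grows. This is precisely the ``$\delta^{1/2}$-tube'' / Wolff-type phenomenon: two distinct members of a cinematic family can be $\delta$-close only on a set of $x$'s of measure $\lesssim (\delta/\rho)^{1/2}$ in each of the relevant directions (this is where condition (4) and the modulus of continuity enter, and where the restriction $n\ge 3$ is harmless since we only need the two-variable second-derivative separation). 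I would package this as an $L^2$ / incidence bound: summing the number of ``coincidences'' between $P_i$ and $P_j$ over pairs $i,j$, dyadically decomposing in $\rho = \|f_i-f_j\|_{C^2}$, and using the doubling property (item (2)) to count how many $f_j$ lie in each $\rho$-shell. Combining this incidence bound with Cauchy--Schwarz over the fibers $\{x\}\times\R$ gives the lower bound on the number of occupied $\delta$-cubes, hence \eqref{sharp} after letting $\delta\to 0$ and using that the estimate is uniform.

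For the two regimes of $\min\{t,s\}$: when $t\le s$ the bound $n-2+s+t$ should come out directly from the ``few rich points'' count — essentially every graph contributes its full $(n-2+s)$-dimensional worth of new cubes because the incidence losses are absorbed by $t\le s$; when $s\le t$ one uses an induction-on-scales / bootstrapping step, running the single-scale estimate at scale $\delta^{1/2}$ (or at an intermediate scale $\Delta$) and feeding the output back in, which is the standard device to upgrade $n-2+s+t$ (unavailable when $t>s$) to $n-2+2s$. I expect the main obstacle to be the second point above: establishing the sharp $\delta$-discrete separation/incidence estimate for a cinematic family of multivariable functions with only $C^2$ regularity and only the modulus-of-continuity control in condition (4), rather than the smoothness available in \cite{2022arXiv220702259P}. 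In particular, controlling the set of $x\in[0,1]^{n-1}$ where $|\nabla_\xi\nabla_\xi(f_i-f_j)(x)|$ is small (so that $f_i$ and $f_j$ fail to separate at the quadratic rate) requires a covering argument in the $(n-1)$-dimensional parameter space $x$ that is uniform over directions $\xi\in S^{n-2}$; this is the technical heart and is presumably the content of the auxiliary lemmas (e.g.\ Lemma \ref{dznbhd1}) referenced earlier. Once that single-scale estimate is in hand, the passage to Hausdorff dimension via Frostman measures and the $\delta\to 0$ limit is routine.
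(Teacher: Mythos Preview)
Your overall architecture matches the paper's: reduce to a $\delta$-discretized configuration (Definition~\ref{d:config}), control pairwise overlaps $\mathcal{L}^n(E^\delta(f)\cap E^\delta(g))$ via the cinematic separation lemma (Lemma~\ref{dznbhd1}, refined to Lemma~\ref{shape} and Lemma~\ref{enbhd}), sum dyadically in $\rho=\|f-g\|_{C^2}$ using the doubling/$(\delta,t)$-set property of $F$, and close with Cauchy--Schwarz (Lemma~\ref{csh}) to get Theorem~\ref{thmconfig}; then a standard pigeonholing passes from Theorem~\ref{thmconfig} to Hausdorff dimension.

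Two points where you diverge. First, the case $t>s$ needs no induction on scales: since $\Hd F\ge t>s$, any $(n-2+s,t)_{n-1}$-Furstenberg set is already an $(n-2+s,s)_{n-1}$-Furstenberg set (restrict to a subfamily $F'\subset F$ with $\Hd F'\ge s$), and the $t\le s$ case applied with $t=s$ gives $n-2+2s$ immediately. The paper makes exactly this one-line reduction before proving anything. Second, your quoted separation bound ``measure $\lesssim(\delta/\rho)^{1/2}$ in each direction'' is not the form used: the paper needs and proves the full $(n-1)$-dimensional estimate $\mathcal{L}^{n-1}(P_{f,g})\lesssim\delta/\rho$ (Lemma~\ref{dznbhd1}), and it is precisely here that $n\ge 3$ is \emph{essential}, not harmless --- in the tangent case the polar-coordinate integration produces a factor $(\sqrt{\lambda})^{\,n-3}$ which is bounded only when $n\ge 3$.
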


By verifying $\Hd F = \Hd Z$, we know
 $E$ defined in \eqref{heu3} is a $(n-2+s',\Hd Z)_{n-1}$-Furstenberg set of hypersurfaces. Thus Theorem \ref{mainproj2} is implied by Theorem \ref{dimf} (or more precisely, Theorem \ref{thmconfig}, the $\dz$-discretized version of Theorem \ref{dimf}).

Since the Furstenberg sets $E$ in Theorem \ref{dimf} contains those considered in \cite{MR3973547,MR4002667,MR4452675}, the bound in Theorem \ref{dimf} improves the result of these papers in the corresponding range of $s$ and $t$. Also, it is straightforward to see that the bound in Theorem \ref{dimf} is sharp if $0 \le t \le s \le 1$. For instance, the equality in \eqref{sharp} can be achieved when the graphs from $F$ are parallel hyperplanes in $\R^n$.

\subsection{$\dz$-discretized proof}
We will reduce the proof of Theorem \ref{dimf} to a $\dz$-discretized argument. We introduce several definitions.

\begin{definition}[$(\dz,s)$-set] Let $s \geq 0$, $C > 0$, $\dz> 0$ and $(X,d)$ be a metric space. A bounded set $P \subset X$ is called a \emph{$(\delta,s,C)$-set} if
\begin{displaymath} |P \cap B(x,r)|_{\delta} \leq Cr^{s}|P|_{\delta}, \qquad x \in X, \, r \geq \delta. \end{displaymath}
Here, and in the sequel, $|E|_{\delta}$ refers to the $\dz$ covering number of $E$.  \end{definition}

The following observations are useful to keep in mind about $(\delta,s,C)$-sets. First, if $P$ is a non-empty $(\delta,s,C)$-set, then $|P|_{\delta} \geq C^{-1}\delta^{-s}$. This follows by applying the defining condition at scale $r = \delta$. Second, a $(\delta,s,C)$-set is a $(\delta,t,C)$-set for all $0 \leq t \leq s$.

The next definition can be considered as the $\dz$-discretized version of Furstenberg sets of hypersurfaces.
In the following, for all $(f,x) \in C^2([0,1]^{n-1},[0,1]) \times \R^{n}$, $\pi_{1} \colon C^2([0,1]^{n-1},[0,1]) \times \R^{n} \to C^2([0,1]^{n-1}) $ stands for the map $\pi_{1}(f,x) = f$.

\begin{definition}\label{d:config}
Let $s,t \in (0,1]$, $C>0$, and $\dz> 0$. A \emph{$(\delta,s,t,C)_{n-1}$-configuration} is a set $\Omega \subset C^2([0,1]^{n-1},[0,1]) \times \R^{n}$ such that $F := \pi_{1}(\Omega)$ is a non-empty $(\delta,t,C)$-subset of $C^2([0,1]^{n-1},[0,1])$ consisting of a cinematic family of functions with
cinematic constant $K$ and doubling constant $D$, and $E(f) := \{x \in \R^{n} : (f,x) \in \Omega\}$ is a non-empty $(\delta,n-2+s,C)$-subset of $\mbox{graph}(f)$ for all $f \in F$. Additionally, we require that the sets $E(f)$ have constant $\dz$-covering number: there exists $M \geq 1$ such that $|E(f)|_\dz = M$ for all $f \in F$.
If the constant $M$ is worth emphasising, we will call $\Omega$ a $(\delta,s,t,C,M)_{n-1}$-configuration.
\end{definition}

We will use the next theorem, in a standard way, to show Theorem \ref{dimf} and Theorem \ref{mainproj2}.
\begin{thm}\label{thmconfig}
For $0<t \le s \le 1$, there exist $\epsilon = \ez(s,t,n,K),\delta_{0}=\delta_{0}(\ez) \in (0,\tfrac{1}{2}]$ such that the following holds for all $\delta \in (0,\delta_{0}]$. Let $\Omega$ be a $(\delta,s,t,\delta^{-\epsilon},M)_{n-1}$-configuration. Then, $|\mathcal{E}|_{\delta} \geq \delta^{16\ez - t- (n-2+s)}$, where
\begin{displaymath} \mathcal{E} := \bigcup_{f \in F} E(f). \end{displaymath}
\end{thm}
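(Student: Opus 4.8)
The plan is to argue by contradiction, following the now-standard ``induction on scales'' / high--low decomposition of Orponen--Shmerkin--Pramanik--Yang--Zahl type, adapted to the multi-variable cinematic setting. Suppose $\Omega$ is a $(\dz,s,t,\dz^{-\ez},M)_{n-1}$-configuration with $|\calE|_\dz < \dz^{16\ez - t - (n-2+s)}$. Since each $E(f)$ has $\dz$-covering number $M$ and $F$ is a $(\dz,t,\dz^{-\ez})$-set (so $|F|_\dz \gtrsim \dz^{\ez}\dz^{-t}$), a double-counting / Cauchy--Schwarz argument shows there must be substantial ``concentration'': for many pairs $f, g \in F$, the graph-pieces $E(f)$ and $E(g)$ have large overlap in a $\dz$-neighbourhood. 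The first step is to quantify this, passing to a refined subfamily where the incidence count $\sum_{f \in F} \#\{\dz\text{-tubes meeting } E(f)\text{ inside } N_\dz(\calE)\}$ is maximized, and then to set up the two-scale scheme: fix an intermediate scale $\Delta = \dz^{1/2}$ (or $\dz^{\kappa}$ for a suitable $\kappa$), and split the analysis into the ``low'' part (behaviour of the configuration at scale $\Delta$) and the ``high'' part (behaviour inside each $\Delta$-ball, rescaled to unit scale).

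The heart of the argument is a \emph{branching / rescaling} step. Inside a typical $\Delta$-ball $Q$ in the parameter space $C^2([0,1]^{n-1},[0,1])$, the cinematic conditions (1)--(4) of Definition \ref{cine} guarantee that the rescaled family $\{(f-g)\circ(\text{affine rescaling})\}$, suitably normalized, is again a cinematic family with comparable constants --- this is where condition (4), the uniform modulus of continuity of the second directional derivatives, is essential, and it is precisely the multi-variable analogue of the key structural input in \cite{2022arXiv220702259P}. One then applies the inductive hypothesis (Theorem \ref{thmconfig} at the larger scale $\dz/\Delta$, or rather an $\ez$-improved bookkeeping of it) to both the coarse configuration at scale $\Delta$ and to the rescaled configurations inside each $\Delta$-ball. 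Multiplying the two gains and tracking the $\ez$-losses carefully, the product of covering numbers across scales must meet or exceed $\dz^{-t-(n-2+s)}$ up to an admissible power $\dz^{O(\ez)}$, which contradicts the assumed upper bound $\dz^{16\ez-t-(n-2+s)}$ once $\ez$ and $\dz_0$ are chosen small enough (the constant $16$ is the running total of the $\ez$-losses incurred at the various pigeonholing and rescaling steps). A Frostman-type regularization at the start lets us assume $M$, the branching numbers of $F$, and the fiber covering numbers are all roughly scale-invariant, so that the induction closes.

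I expect the main obstacle to be the rescaling step in the parameter space: one must verify that zooming into a $\Delta$-ball of $C^2([0,1]^{n-1},[0,1])$ and renormalizing keeps the cinematic structure --- in particular that the transversality lower bound (3) survives with a quantitatively controlled (if worsened) constant $K$, and that the doubling constant $D$ does not blow up. In one variable (the PYZ setting) this is a relatively clean computation with the $C^2$ norm; here the presence of the sphere $S^{k-1}$ of directions $\xi$ and the multi-variable Taylor expansion of $f-g$ around a point of $[0,1]^{n-1}$ make the bookkeeping heavier, and one has to be careful that the ``bad'' directions $\xi$ where transversality degenerates form a set that can be pigeonholed away without destroying the $(\dz,n-2+s)$-set property of the fibers $E(f)$. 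A secondary, more technical obstacle is the passage between the continuous statement (Theorem \ref{dimf}) and this $\dz$-discretized one, and ensuring the constant $C = \dz^{-\ez}$ (rather than an absolute constant) is propagated correctly through the induction --- this is handled, as usual, by absorbing the polynomial-in-$C$ losses into the $\dz^{O(\ez)}$ factors and choosing $\ez$ small relative to $s, t, n, K$ at the very end.
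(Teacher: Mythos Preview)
Your proposal takes a genuinely different---and considerably more elaborate---route than the paper. The paper does \emph{not} use induction on scales, high--low decomposition, or any rescaling in the parameter space. Instead, the proof of Theorem \ref{thmconfig} is a direct $L^2$ / Cauchy--Schwarz argument in the style of C\'ordoba: one writes
\[
\calL^n(\calE^\dz) \ge \frac{\bigl[\sum_{f\in F}\calL^n(E^\dz(f))\bigr]^2}{\sum_{f,g\in F}\calL^n(E^\dz(f)\cap E^\dz(g))},
\]
bounds the numerator below trivially by $(\dz^{5\ez+2-t-s})^2$, and bounds the denominator above by $\dz^{-6\ez}\dz^{2-t-s}$. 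The entire content lies in the denominator estimate, and this is supplied by the geometric Lemmas \ref{dznbhd1} and \ref{shape} (packaged as Lemma \ref{enbhd}), which control the size of $E^\dz(f)\cap E^\dz(g)$ in terms of $\|f-g\|_{C^2}$. Summing over dyadic shells $\|f-g\|\sim 2^{-i}$ and using the $(\dz,t)$-set property of $F$ gives the bound in a few lines. No rescaling, no induction, no Frostman regularization of branching numbers.

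Your multi-scale scheme is not obviously wrong, but it is attacking a problem that has already been solved by the time Theorem \ref{thmconfig} is stated: the ``hard'' cinematic input is Lemma \ref{dznbhd1} (the $\dz^2/\|f-g\|$ intersection bound), and once that is available the theorem follows from a single application of Cauchy--Schwarz. The obstacle you identify---verifying that cinematic structure survives rescaling in $C^2([0,1]^{n-1})$ with controlled constants---is real and nontrivial, and it is precisely what the paper avoids by working at a single scale with the sharp two-graph intersection estimate. In short: your Cauchy--Schwarz opening move is the right instinct, but rather than pivoting to induction on scales, you should feed in the pairwise overlap bound from Lemma \ref{enbhd} directly and sum.
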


For arbitrary $\dz>0$ and
subdomain $D \subset [0,1]^{n-1}$, we
define the \emph{vertical $\dz$-neighborhood}
   $$f^{\dz}(D):= \left\{(x,y)=(x_1,\cdots,x_{n-1},y) \in D\times\mathbb{R} : f(x)- \dz\le y \le f(x)+\dz\right \}.$$
   When $D=[0,1]^{n-1}$, we write $f^\dz=f^\dz([0,1]^{n-1})$ for short.
The idea to show Theorem \ref{thmconfig} is to upper bound the measure of the intersection $f^\dz \cap g^\dz$ for any $f,g \in F$.
By the cinematic property, we can show
\begin{lemma} \label{dznbhd1}
   Let $n\ge 3$, $\dz>0$ and
$F \subset C^{2}([0,1]^{n-1})$ be a cinematic family. Then for any pair $f,g \in F$ with $\|f-g\|_{C^2([0,1]^{n-1})} \in [\dz, 1]$, $f^\delta \cap g^\delta$ has measure
\begin{equation}\label{meas}
  \calL^n(f^\delta \cap g^\delta) \lesssim \frac{\delta^2}{\|f-g\|_{C^2([0,1]^{n-1})}}.
\end{equation}
\end{lemma}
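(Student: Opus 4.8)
\textbf{Proof plan for Lemma \ref{dznbhd1}.}

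The plan is to reduce the $n$-dimensional estimate to a one-dimensional one by slicing $[0,1]^{n-1}$ along line segments and applying the corresponding planar bound, then integrating. Write $h := f - g$, so that $\|h\|_{C^2} \in [\dz,1]$. A point $(x,y) \in [0,1]^{n-1}\times\R$ lies in $f^\dz \cap g^\dz$ only if $|y - f(x)| \le \dz$ and $|y - g(x)| \le \dz$, which forces $|h(x)| = |f(x) - g(x)| \le 2\dz$; and once $x$ is fixed the set of admissible $y$ is an interval of length at most $2\dz$. By Fubini, $\calL^n(f^\dz\cap g^\dz) \le 2\dz \cdot \calL^{n-1}(\{x \in [0,1]^{n-1} : |h(x)| \le 2\dz\})$. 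So the whole problem is to show
\begin{equation}\label{eq:plan-superlevel}
  \calL^{n-1}\bigl(\{x \in [0,1]^{n-1} : |h(x)| \le 2\dz\}\bigr) \lesssim \frac{\dz}{\|h\|_{C^2}}.
\end{equation}

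To prove \eqref{eq:plan-superlevel} I would first use cinematic condition (3) to locate a good direction. Applying (3) with $\dz$ replaced by $\|h\|_{C^2}$ (legitimate since $\|h\|_{C^2} \le 1$) gives a point $x_0 \in [0,1]^{n-1}$ and a unit vector $\xi \in S^{n-2}$ at which at least one of $|h(x_0)|$, $|\nabla h(x_0)|$, $|\nabla_\xi\nabla_\xi h(x_0)|$ is $\gtrsim \|h\|_{C^2}$; in fact I want the second-order term to dominate, so the cleaner route is: either $|\nabla_\xi\nabla_\xi h(x_0)| \gtrsim \|h\|_{C^2}$ for some $\xi$, or else $h$ has small second derivatives everywhere (of size $o(\|h\|_{C^2})$ in every direction at $x_0$), and then condition (3) still forces $|h(x_0)| + |\nabla h(x_0)| \gtrsim \|h\|_{C^2}$. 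Combined with condition (4) — the uniform modulus of continuity of $\nabla_\xi\nabla_\xi h$ — the large value of the relevant derivative at $x_0$ persists, up to a constant factor, on a ball $B(x_0,\rho)$ with $\rho \gtrsim \az(\text{const}\cdot\|h\|_{C^2}) \gtrsim \|h\|_{C^2}$ (using $\az(s) \asymp s$ is \emph{not} available, only $\az(s)\le K^{-1}s$; but what we need is a \emph{lower} bound $\rho \ge \az(\cdot)$, and since $\az$ is continuous, increasing and positive on $(0,1]$, $\az$ evaluated at a fixed multiple of $\|h\|_{C^2}$ is itself a fixed quantity comparable to a constant times $\|h\|_{C^2}$ only if $\az$ is linear — so instead I will phrase the final estimate with the constant depending on $K$ and absorb $\az$-factors into $\lesssim$, which is consistent with how the lemma is stated). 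On this ball, restrict $h$ to an arbitrary line segment in direction $\xi$ (or, in the small-second-derivative case, in the direction of $\nabla h(x_0)$): we get a $C^2$ function $\phi$ of one real variable on an interval of length $\gtrsim \|h\|_{C^2}$ with $|\phi''| \gtrsim \|h\|_{C^2}$ throughout (resp.\ $|\phi'|\gtrsim\|h\|_{C^2}$, resp.\ $|\phi(0)|$ large but then the superlevel set is empty for small $\dz$). A standard one-dimensional sublevel-set estimate (van der Corput type, or just: a function with $|\phi''|\ge c$ on an interval spends length $\lesssim \sqrt{\dz/c}$ in any $\dz$-band — actually for the \emph{linear} growth we want the gradient case, giving length $\lesssim \dz/c$) shows that on each such segment the set $\{|h|\le 2\dz\}$ has $1$-dimensional measure $\lesssim \dz/\|h\|_{C^2}$. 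Foliating $B(x_0,\rho)$ by these parallel segments and integrating in the $n-2$ transverse directions over a region of diameter $\rho \lesssim 1$ yields \eqref{eq:plan-superlevel}, hence \eqref{meas}.

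The main obstacle is making the "gradient is large on a whole ball" step genuinely produce a \emph{linear} (not merely square-root) gain $\dz/\|h\|_{C^2}$, and this is exactly why $n\ge 3$ and condition (4) are invoked: when the Hessian of $h$ is sizeable in direction $\xi$, the correct object to control is the restriction $\phi$ of $h$ to a $\xi$-line, and one shows $\phi'$ is monotone with $|\phi''|\gtrsim\|h\|_{C^2}$, so $\phi$ can cross the $\dz$-band $\{|y|\le 2\dz\}$ on a set of length $\lesssim \sqrt{\dz/\|h\|_{C^2}}$ per line — but integrating this naive bound over the transverse ball of radius $\rho\gtrsim\|h\|_{C^2}$ would give $\rho^{n-2}\sqrt{\dz/\|h\|_{C^2}}$, which is too weak. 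The fix, and the delicate point, is to not integrate a worst-case per-line bound but to use a two-dimensional argument in the plane $\spa\{\xi, \nabla h\}$ (or a coarea/layer-cake computation): the set $\{|h|\le 2\dz\}$ near $x_0$ is contained between two level sets $\{h = \pm 2\dz\}$ which, by the nondegenerate second derivative, are $C^1$ hypersurfaces a distance $\lesssim \dz/\|h\|_{C^2}$ apart in the $\xi$-direction, so the slab they bound has $(n-1)$-measure $\lesssim (\dz/\|h\|_{C^2})\cdot \rho^{n-2} \lesssim \dz/\|h\|_{C^2}$ since $\rho\lesssim 1$. This is where the curvature/nondegeneracy of $\Sigma$, transmuted into the cinematic conditions (3)–(4), does the real work, and where the argument breaks in dimension $n=2$ (there is no transverse direction, and condition (4) alone does not upgrade the square-root bound).
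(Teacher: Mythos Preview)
Your reduction to the sublevel-set estimate \eqref{eq:plan-superlevel} is correct, and your instinct to split into a ``transversal'' case (where $|h|+|\nabla h|$ is uniformly $\gtrsim \|h\|_{C^2}$) and a ``tangent'' case (where the second derivative dominates) matches the paper. But your treatment of the tangent case contains a genuine error.

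You claim that when $|\nabla_\xi\nabla_\xi h| \gtrsim \|h\|_{C^2}$, the level sets $\{h = \pm 2\dz\}$ are $C^1$ hypersurfaces a distance $\lesssim \dz/\|h\|_{C^2}$ apart, so the sublevel set is a thin slab. This is false: the distance between level sets is controlled by $|\nabla h|$, not by the Hessian, and in the tangent case $|\nabla h|$ can vanish. Take $h(x) = t|x-x_M|^2 - \lz$ with $t = \|h\|_{C^2}$ and $\lz$ small: then $\{|h|\le 2\dz\}$ is an annulus of inner/outer radii $\sim \sqrt{(\lz\mp\dz)/t}$, whose width is $\sim \dz/\sqrt{\lz t}$, potentially $\gg \dz/t$. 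When $\lz \le \dz$ it is a full ball of radius $\sim\sqrt{\dz/t}$. Neither is a slab of thickness $\dz/t$.

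What rescues the estimate --- and where $n\ge 3$ actually enters --- is a polar-coordinate computation, not a slab bound. The paper shows that condition (3), holding for \emph{every} $x$ and $\xi$, forces $h$ to be strictly convex (or concave) on each small cube, with a unique critical point $x_M$. Restricting $h$ to each ray from $x_M$ gives a one-variable function with $h''\gtrsim t$ and $h'(0)=0$; the one-dimensional lemma then gives that on each ray the set $\{|h|\le 2\dz\}$ is an interval of length $\lesssim \dz/\sqrt{(\lz+\dz)t}$ contained in $[0,C\sqrt{(\lz+\dz)/t}]$, with $\lz=|h(x_M)|$. Integrating in polar coordinates produces the Jacobian $s^{n-2}$, and
\[
\calL^{n-1}(\{|h|\le 2\dz\}) \lesssim \Bigl(\sqrt{(\lz+\dz)/t}\Bigr)^{n-2}\cdot \frac{\dz}{\sqrt{(\lz+\dz)t}} = \frac{\dz}{t}\cdot\Bigl(\tfrac{\lz+\dz}{t}\Bigr)^{(n-3)/2} \le \frac{\dz}{t},
\]
the last step using $\lz \le t$ and $n\ge 3$. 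Your explanation that $n\ge 3$ matters because ``there is no transverse direction'' in $n=2$ misses this mechanism entirely.

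Two smaller points: condition (3) holds at \emph{every} $(x,\xi)$, not at a single point $x_0$ you must hunt for; and the localization is not to a ball of radius $\sim\|h\|_{C^2}$ but to dyadic cubes of side $\sim \az(K^{-1})$ depending only on the cinematic data, so that Lemma~\ref{local}/Corollary~\ref{uniq} provides a clean trichotomy on each cube.
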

Here and throughout the paper, $\calL^k$ denotes the $k$-dim Lebesgue measure in $\R^k$, $k \in \N$.
A direct consequence is the following functional estimate.
\begin{thm}\label{maxfcn}
Let $n \ge 3$.
For every $\ez > 0$ and $t \in (0,1]$, there exist $\delta_{0} \in (0,\tfrac{1}{2}]$ such that the following holds for all $\delta \in (0,\delta_{0}]$. Let $F \subset C^2([0,1]^{n-1},[0,1])$ be a non-empty $\dz$-separated $(\delta,t,\dz^{-\ez})$-set of cinematic family.
Then,
\begin{equation}\label{l2}
  \int_{[0,1]^{n} }  (\sum_{f \in F} \chi_{f^\dz}(x))^2 \, dx \lesssim \dz^{-2\ez}|F|^2\dz^{1+t}
\end{equation}
where $\chi_A$ is the characteristic function of a set $A \subset \R^n$.
\end{thm}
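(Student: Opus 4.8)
\textbf{Proof proposal for Theorem \ref{maxfcn}.}

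The plan is to expand the square and reduce to estimating the pairwise overlaps $\calL^n(f^\dz \cap g^\dz)$ for $f,g \in F$, which is exactly what Lemma \ref{dznbhd1} controls. Writing
\begin{equation*}
  \int_{[0,1]^{n}} \Big(\sum_{f \in F} \chi_{f^\dz}(x)\Big)^2\, dx = \sum_{f \in F}\sum_{g \in F} \calL^n\big(f^\dz \cap g^\dz \cap [0,1]^n\big),
\end{equation*}
I would split the double sum into the diagonal part $f=g$ and the off-diagonal part $f \neq g$. For the diagonal, $\calL^n(f^\dz) \lesssim \dz$ since $f^\dz$ is a vertical $\dz$-tube over $[0,1]^{n-1}$, contributing $\lesssim |F|\dz$; since $F$ is a $(\dz,t,\dz^{-\ez})$-set we have $|F| \geq \dz^{-\ez}\dz^{-t}$ (by the observation recorded after the definition of $(\dz,s,C)$-sets), wait—I need the bound the other way, so instead I use $\dz \le \dz^{-2\ez}|F|\dz^{1+t}$, which holds because $|F| \geq \dz^{t}\cdot\dz^{-2\ez}$ fails in the wrong direction; the correct route is that $|F|\dz \le \dz^{-2\ez}|F|^2\dz^{1+t}$ is equivalent to $1 \le \dz^{-2\ez}|F|\dz^{t}$, i.e. $|F| \geq \dz^{2\ez-t}$, which is true for $\dz$ small since $|F| \geq \dz^{-\ez-t}$. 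So the diagonal is absorbed into the target bound.

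For the off-diagonal terms, I would perform a dyadic decomposition of $F \times F$ according to the $C^2$-distance: for each integer $j$ with $\dz \le 2^{-j} \lesssim 1$, let
\begin{equation*}
  A_j := \{(f,g) \in F \times F : \|f-g\|_{C^2([0,1]^{n-1})} \in [2^{-j}, 2^{-j+1})\}.
\end{equation*}
On $A_j$, Lemma \ref{dznbhd1} gives $\calL^n(f^\dz \cap g^\dz) \lesssim \dz^2 \cdot 2^{j}$. To count the pairs in $A_j$, I fix $f \in F$; the set of $g$ with $\|f-g\|_{C^2} < 2^{-j+1}$ is $F \cap B(f, 2^{-j+1})$ in the $C^2$-metric, and since $F$ is a $(\dz,t,\dz^{-\ez})$-set this has $\dz$-covering number at most $\dz^{-\ez}(2^{-j+1})^t|F|_\dz$; because $F$ is $\dz$-separated, its $\dz$-covering number is comparable to its cardinality, so $|F \cap B(f,2^{-j+1})| \lesssim \dz^{-\ez}2^{-jt}|F|$. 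Hence $\#A_j \lesssim \dz^{-\ez}2^{-jt}|F|^2$, and summing,
\begin{equation*}
  \sum_{f \neq g}\calL^n(f^\dz \cap g^\dz) \lesssim \sum_{2^{-j} \geq \dz} \dz^{-\ez}2^{-jt}|F|^2 \cdot \dz^2 2^{j} = \dz^{2-\ez}|F|^2 \sum_{2^{-j}\geq \dz} 2^{j(1-t)}.
\end{equation*}
Since $t \le 1$, the geometric sum $\sum_{2^{-j}\geq\dz} 2^{j(1-t)}$ is dominated by its largest term $2^{j_{\max}(1-t)} \lesssim \dz^{-(1-t)}$ (with an extra harmless $\log(1/\dz)$ factor in the borderline case $t=1$, absorbed into $\dz^{-\ez}$), yielding $\lesssim \dz^{2-\ez}|F|^2 \dz^{t-1} = \dz^{1+t-\ez}|F|^2 \le \dz^{-2\ez}|F|^2\dz^{1+t}$ as desired.

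The main obstacle is ensuring Lemma \ref{dznbhd1} applies uniformly: it requires $\|f-g\|_{C^2} \in [\dz,1]$, so I must handle pairs with $\|f-g\|_{C^2} < \dz$ separately—but for such pairs one simply uses the trivial bound $\calL^n(f^\dz \cap g^\dz) \le \calL^n(f^\dz) \lesssim \dz$, and the number of such pairs is at most $\dz^{-\ez}\dz^{t}|F|^2$ (again by the $(\dz,t,\dz^{-\ez})$-set property applied at radius $r=\dz$), contributing $\lesssim \dz^{1+t-\ez}|F|^2$, which is within budget. A secondary technical point is that the cinematic constant and doubling constant of $F$ enter the implied constants in Lemma \ref{dznbhd1}; these are fixed by hypothesis (the constant $K$ in the statement of Theorem \ref{thmconfig} and the ambient setup), so all the $\lesssim$ above may depend on $n$, $K$, $D$, but crucially not on $\dz$, and the choice of $\dz_0$ is made at the end so that $\dz^{-\ez}$ swallows the universal constants and the logarithmic factor.
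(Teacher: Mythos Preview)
Your proposal is correct and follows essentially the same approach as the paper: expand the square, decompose the off-diagonal pairs dyadically in the $C^2$-distance, apply Lemma \ref{dznbhd1} on each shell, count pairs in each shell via the $(\dz,t,\dz^{-\ez})$-set property, and absorb the $\log(1/\dz)$ factor into $\dz^{-\ez}$ by choice of $\dz_0$. Two small remarks: (i) in the diagonal step you wrote $|F| \geq \dz^{-\ez-t}$, but the $(\dz,t,\dz^{-\ez})$-set condition only gives $|F| \geq \dz^{\ez-t}$; this is still $\geq \dz^{2\ez - t}$, so your absorption argument goes through; (ii) since $F$ is assumed $\dz$-separated in the $C^2$-metric, there are in fact no off-diagonal pairs with $\|f-g\|_{C^2} < \dz$, so your separate treatment of that regime is unnecessary (the paper simply notes $F_i(f)=\{f\}$ for $2^{-i}\le \dz$).
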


When the graphs of functions in $F$ in Lemma \ref{dznbhd1} and Theorem \ref{maxfcn} are $n-1$-spheres, inequalities \eqref{meas} and \eqref{l2} were known from the work by Kolasa-Wolff \cite[Lemma 2.1 and Theorem 1']{MR1722768}.
The proof of Theorem \ref{thmconfig} is also standard, which can date back to Cordoba \cite{Crdoba1977THEKM} who utilised the functional inequality to show that the planar Kakeya sets always have dimension $2$.


\subsection{Sketch the proof of Lemma \ref{dznbhd1}}
Finally, we briefly sketch the proof of Lemma \ref{dznbhd1}, which forms the core and the most technical part of the paper.
We will provide the heuristic idea
under the additional assumption $\|f-g\|_{C^2([0,1]^{n-1})} \sim 1$ for simplicity.

For some technical reason, we will decompose $[0,1]^{n-1}$ to dyadic cubes $U$ with diameter small enough only depending on  the cinematic constant $K$ as well as the modulus of continuity $\az$ and upper bound $\calL^{n}[f^\delta(U) \cap g^\delta(U)]$ for all $U$.
We fix $U \subset [0,1]^{n-1}$ and write $f^\dz \cap g^\dz = f^\delta(U) \cap g^\delta(U)$ for short. Noticing that the vertical thickness of $f^\dz \cap g^\dz$ is no more than $2\dz$, letting $P_{f,g}$ be the orthogonal projection of $f^\dz \cap g^\dz $ into the first $n-1$ coordinates, it suffices to show
\begin{equation}\label{sketch2}
   \calL^{n-1}(P_{f,g}) \lesssim \dz/\|f-g\|_{C^2([0,1]^{n-1})} \sim \dz.
\end{equation}

The proof consists two cases. Define the ``tangency parameter''
\begin{equation}\label{dz1}
  \Delta(f,g) := \inf_{x \in U} \{ |f(x)-g(x)| + |\nabla f(x)- \nabla g(x)|   \}.
\end{equation}

The first case is called the transversal case, where $\Delta(f,g) \sim \|f-g\|_{C^2([0,1]^{n-1})} \sim 1$, and roughly speaking, the intersection $f^\dz \cap g^\dz \subset U \times [0,1]$ looks like the one of two $c \times \cdots \times c\times \dz$ slabs in $U \times [0,1]$ where $c \sim \diam U$.
Thus we would expect $ \calL^{n-1}(P_{f,g}) \lesssim \dz$, i.e. $\calL^{n}(f^\dz \cap g^\dz) \lesssim \dz^2$ since $\sim \dz^2$ is the measure of the intersection of two transverse slabs in $\R^n$.
Actually, we will show this by foliating $P_{f,g}$ by an $(n-2)$-dimensional family of  curves $\{\gz_x: [0,b_x] \to U\}_{x \in \Lambda}$ where $\Lambda \subset \pa U$ and $\gz_x$ solves the ODE
\begin{equation}\label{sketch1}
  \dot \gz_x (t) = \frac{\nabla h(\gz_x (t))}{|\nabla h(\gz_x (t))|} \quad x \in \Lambda.
\end{equation}
Then we define a family of functions $h_x:= h \circ \gz_x$ of one variable. For these functions $h_x$, by employing the information that $\Delta(f,g) \sim \|f-g\|_{C^2([0,1]^{n-1})} \sim 1$ (applying an auxiliary Lemma \ref{pyz1} (1) in the real proof), we can show that
$$  |\{s \in [0,b_x]: |h_x(s)| \le 2\dz \}| \lesssim \dz. $$
Noting that
$$ P_{f,g}= \{x\in U :  |h(x)| \le 2\dz \} = \bigcup_{x\in \Lambda}  \{\gz_x(s): |h_x(s)| \le 2\dz \},$$
we will deduce \eqref{sketch2} if we can show the map $\Psi: \Omega:= \cup_{x \in \Lambda} (x,[0,b_x]) \to U$,
$$ (x,s) \mapsto \Psi(x,s):= \gz_x(s)$$
is a Lipschitz map with Lipschitz constant $\lesssim 1$. Indeed, since $\gz_x$ is the solution of \eqref{sketch1}, by the continuity dependence of the initial value in ODE theory, together with the $C^2$ regularity of $h$, we could obtain the desired quantitative bound $\| \mbox{Lip } \Psi\|_{L^\fz(\Omega)}\lesssim 1$.

\medskip
The second case is called the tangent case where the intersection $f^\dz \cap g^\dz$ looks like the one of two $\dz$-neighbourhoods of $(n-1)$-spheres ``$\lz$-almost tangent at one point'' in $\R^n$, meaning that the tangency parameter
$\Delta(f,g) \sim \lz \in [0, 1]$ is much smaller compared with the quantity $\|f-g\|_{C^2([0,1]^{n-1})}$.

Roughly speaking, when $\Delta(f,g)=0$, then there exists some point $x \in U$ such that the graphs of $f$ and $g$ share the same $(n-1)$-dim tangent plane at $x$.

To see $\calL^{n-1}(P_{f,g})\lesssim \dz$, we  give heuristic ideas in the special case that the graphs of $f$ and $g$ are (part of) upper-hemispheres and the tangency parameter $\lz \ge \dz$ (the case $\lz \le \dz$ is easier).
We start with two $2$-dimensional spheres $S^2$ in $\mathbb{R}^{3}$.

For convenience, we focus on the case that two centers of the (upper hemi)-spheres lie on the $x_3$-axis in $\mathbb{R}^3=\{(x_1,x_2,x_3)\}$ and they are the graphs $\Gamma_f$ and $\Gamma_g$ of $f$ and $g$, and the condition $\lz \ge \dz$ indicates that they are nearly inner tangent (but not perfectly tangent). See Figure \ref{sfig1} for an illustration.

\begin{figure}[h!]
\begin{center}
\begin{overpic}[scale = 0.3]{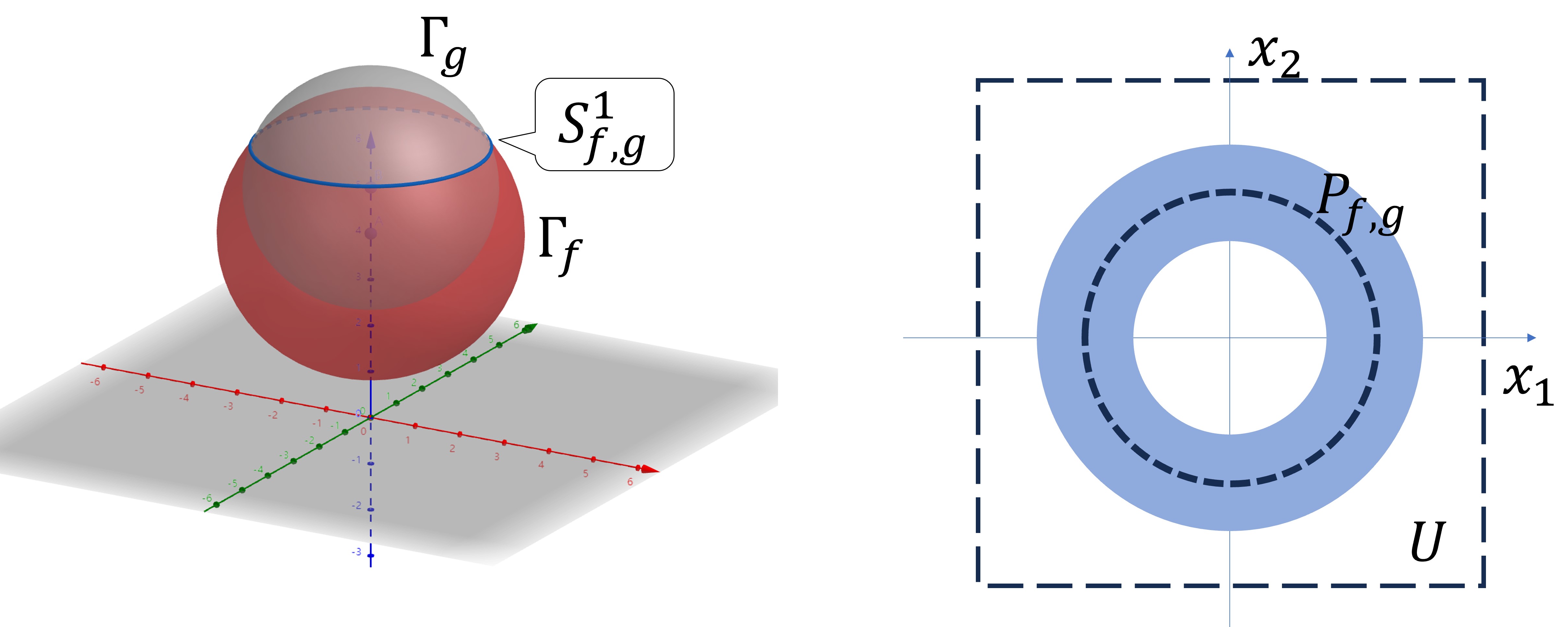}
\end{overpic}
\caption{The intersection of two spheres}\label{sfig1}
\end{center}
\end{figure}

The intersection of these two spheres $\Gamma_f \cap \Gamma_g$ is topologically a circle $S^1_{f,g}$ parallel to $x_1ox_2$-plane.
Thus $f^\delta \cap g^\delta$ is some "neighbourhood" of this $S^1_{f,g}$ in $\mathbb{R}^3$ which looks like a solid torus in $\mathbb{R}^3$. Moreover, $P_{f,g}$ is some neighbourhood of a circle in $x_1ox_2$-plane with radius $r$ same as the one of $S^1_{f,g}$. To estimate $\calL^{2}(P_{f,g})$, we just need to estimate two values: the radius $r$ of $S^1_{f,g}$ and the ``thickness'' of $P_{f,g}$.

To this end,
we decompose this 2-dimensional set $P_{f,g}$ to many $1$-dimensional intervals. For any unit vector $\xi$ in $x_1ox_2$-plane (i.e. $\xi \in S^1$), define
$$E_\xi:=\{s \ge 0 : s\xi \in U\} \mbox{ and } P_\xi:=\{s\xi : s   \in E_\xi\}.$$
Thus $U= \cup_{\xi \in S^1} P_\xi$ and $P_{f,g}= \cup_{\xi \in S^1} [P_\xi \cap P_{f,g}]$. The sets $\{P_\xi \cap P_{f,g}\}$ are the desired intervals (where the length $|\{P_\xi \cap P_{f,g}\}|$ corresponds to the ``thickness'' of $P_{f,g}$) and we will show
\begin{equation}\label{circle}
  \max_{x \in  \{P_\xi \cap P_{f,g}\}} |x| \lesssim \sqrt{\lz} \mbox{ and }  |P_\xi \cap P_{f,g}| \lesssim \sqrt{\dz/\lz} \quad \xi \in S^1.
\end{equation}
Geometrically, $P_\xi \cap P_{f,g}$ is the projection of the intersection of the (part of) plane $H_\xi=P_\xi \times$ span $\{(0,0,1)\}$ and the set $f^\dz \cap g^\dz$.
Observe that
$$ H_\xi \cap f^\delta \cap g^\delta$$
is the intersection of two $\delta$-neighbourhood of (part of) circles $S^1$ in $H_\xi$. We have transferred the estimate of $\calL^{2}(P_{f,g})$ to that of two $\delta$-neighbourhood of circles. See Figure \ref{sfig2} for an illustration.
\begin{figure}[h!]
\begin{center}
\begin{overpic}[scale = 0.3]{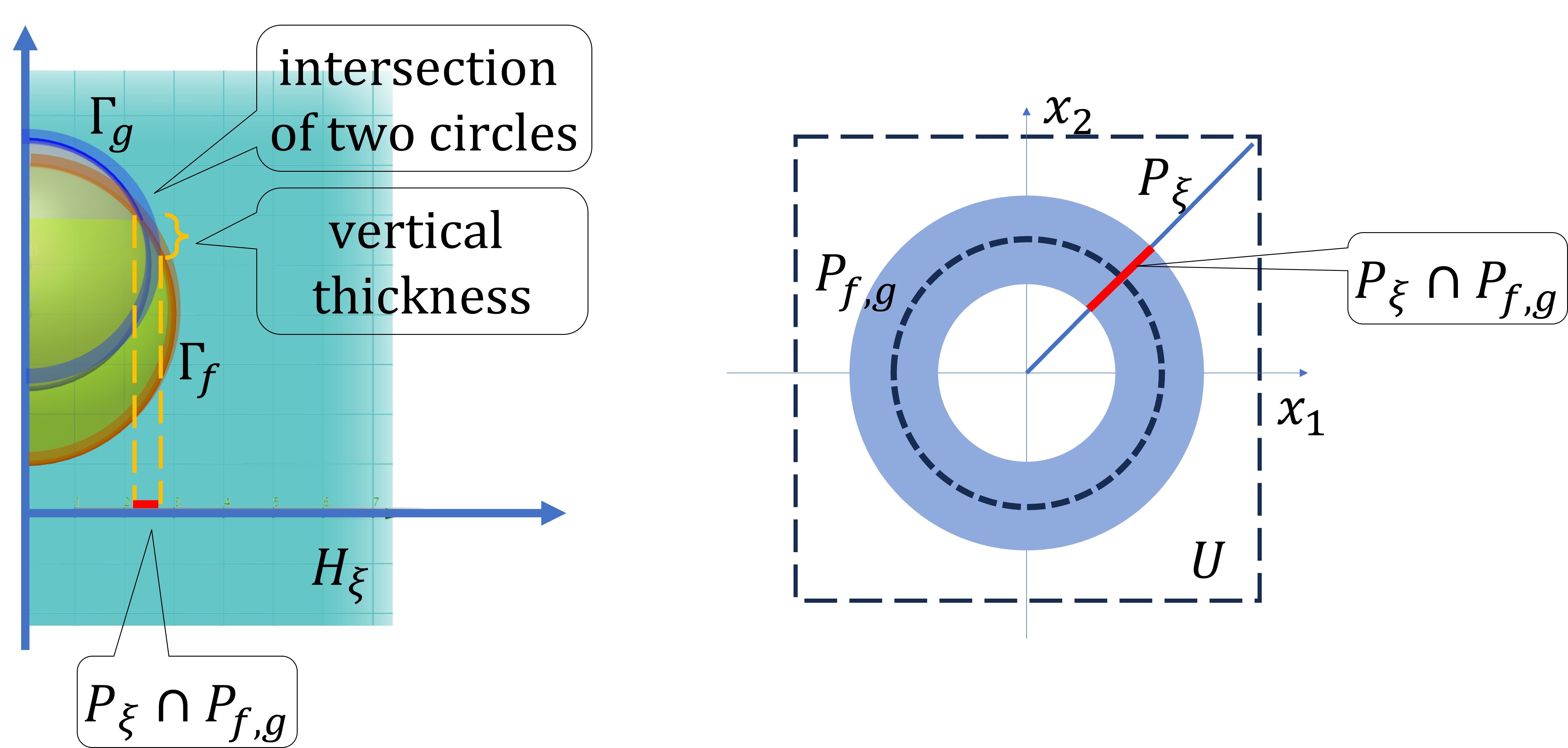}
\end{overpic}
\caption{The intersection of $H_\xi$ and two spheres}
\label{sfig2}
\end{center}
\end{figure}

For the $\delta$-neighbourhood of circles, the estimate \eqref{circle} has already been established by Kolasa-Wolff \cite[Lemma 3.3]{MR1722768}.
Consequently, noting that $(\xi,s)$ forms a natural polar coordinate, we use this polar coordinate to compute
\begin{align*}
    \calL^{2}(P_{f,g})& = \calL^{2}[\bigcup_{\xi \in \{S^{1}\}} |P_\xi \cap P_{f,g}| ] \lesssim \int_{\xi \in S^{1}} \int_{0}^{\sqrt{\lambda} } \chi_{P_\xi \cap P_{f,g}}(\xi s) s \, ds \, d\xi  \\
     & \lesssim  \int_{0}^{\sqrt{\lambda} } |P_\xi \cap P_{f,g}| \sqrt{\lambda} \, ds \\
     & \lesssim \sqrt{\lambda } \frac{\delta}{\sqrt{\lambda}} \le \delta.
  \end{align*}

  For higher $(n-1)$-dimensional spheres $S^{n-1}$ in $\mathbb{R}^{n}$, in this case the intersection of $\Gamma_f$ and $\Gamma_g$ will be some $(n-2)$-dimensional sphere $S^{n-2}_{f,g}$, so $f^\delta \cap g^\delta$ is some neighbourhood of this sphere $S^{n-2}_{f,g}$ in $\mathbb{R}^{n}$ with vertical thickness $\sim \delta$. Also, $P_{f,g}$ will be the neighbourhood of the projection of $S^{n-2}_{f,g}$ to span$\{x_1,x_2,\cdots x_{n-1}\}$.

  To compute $\calL^{n-1}(P_{f,g})$, we still use decomposition.
  Let $\xi \in S^{n-2} \subset $ span$\{x_1,x_2,\cdots x_{n-1}\}$ and $E_\xi, P_\xi, H_\xi$ be defined similarly as the above case.
  Then $P_\xi \cap f^\delta \cap g^\delta$ is still
the intersection of two $\delta$-neighbourhood of two circles $S^1$ in $H_\xi$ and thus the estimate \eqref{circle} holds.
Similarly we compute
\begin{align*}
    \calL^{n-1}(P_{f,g})& = \calL^{n-1}[\bigcup_{\xi \in \{S^{n-2}\}} |P_\xi \cap P_{f,g}| ] \lesssim \int_{\xi \in S^{n-2}} \int_{0}^{\sqrt{\lambda} } \chi_{P_\xi \cap P_{f,g}}(\xi s) s^{n-2} \, ds \, d\xi  \\
     & \lesssim  \int_{0}^{\sqrt{\lambda} } |P_\xi \cap P_{f,g}| \sqrt{\lambda}^{n-2} \, ds \\
     & \lesssim \sqrt{\lambda }^{n-2} \frac{\delta}{\sqrt{\lambda}} = \sqrt{\lambda }^{n-3}\delta \le \delta.
  \end{align*}
  Thus we get the desired result.
  From this computation, we also notice that the assumption $n \ge 3$ is necessary.

  \medskip
  In general, the graphs of $f$ and $g$ may not be (part of) $(n-1)$-dimensional spheres. Motivated by the above special case, we will still decompose $f^\dz \cap g^\dz$ into an $(n-2)$-dimensional family of slices $H_\xi \cap f^\dz \cap g^\dz$ such that $H_\xi \cap f^\dz \cap g^\dz$ looks like the intersection of two $\dz$-neighbourhoods of graphs of functions in a cinematic family of one variable for all $\xi \in S^{n-2}$. This decomposition is  guaranteed by the fact that $f-g$ is strictly convex/concave in $U$ which is further derived from our Definition \ref{cine} of the cinematic family.
  Thus
  for each intersection $P_\xi \cap P_{f,g}$, we will show that the estimate \eqref{circle} holds for a general cinematic family with the help of the auxiliary Lemma \ref{pyz1} which is a slight modification of \cite[Lemma 3.8]{2022arXiv220702259P}.
  Finally, with the estimate \eqref{circle}, similar to the above computation, we use the integration under the polar coordinate to deduce Lemma \ref{dznbhd1} as desired.

\bigskip
\noindent
{\bf Organization.}
In Section 2, we recall several definitions and give preliminary properties of cinematic families. In Section 3, we prove Lemma \ref{dznbhd1} and Theorem \ref{maxfcn}. In Section 4, we show Theorem \ref{thmconfig} and Theorem \ref{dimf}. Finally, in Section 5, we apply Theorem \ref{thmconfig} to deduce Theorem \ref{mainproj2} and Theorem \ref{mainproj}. Also, we give the new proof for the Mattila's projection theorem in $\R^n$ with $n \ge 3$.

\bigskip
\noindent
{\bf Notation.}
The notation $A \lesssim B$ means that there exists a constant $C \geq 1$ such that $A \leq CB$. The two-sided inequality $A \lesssim B \lesssim A$ is abbreviated to $A \sim B$. The constant $C$ can depend on the dimension $n$, and if we fix a cinematic family $F$ of cinematic constant $K$ and doubling constant $D$, $C$ may depend on $K$ and $D$.
If we want to stress the dependence of the constant $C$ with some  parameter "$\theta$", we indicate this by writing $A \lesssim_{\theta} B$.

By the notation $|A|$ we denote the cardinality of $A$ and by $\calH^{s}(A)$ the $s$-dimensional Hausdorff measure of $A$.

For two vectors $u,v \in \mathbb{R}^n$,
$u \cdot v$ and $\langle u ,v \rangle$ both denote the Euclidean inner product. Given $f \in C^2(\Omega)$ for some $k$-dim $C^2$ manifold $\Omega \subset \mathbb{R}^n$,
$\nabla f$ and $\nabla^2 f$ denote the gradient and Hessian of $f$ respectively. Moreover, as usual $\nabla_u f:= \langle \nabla f ,u \rangle$ and $\nabla_v\nabla_u f:=  \langle \nabla^2 f \cdot u ,v \rangle$.
If $\Omega \subset \mathbb{R}$, we write $f'=\nabla f $ and $f''=\nabla^2 f$. As usual, $T_x\Omega=\R^k$ is the tangent space of $x \in \Omega$ and for $\xi \in T_x\Omega$ with $|\xi|=1$, we always write $\xi \in S^{k-1}$ for simplicity.

\bigskip
\noindent
{\bf Acknowledgement.}
The author would like to thank Katrin F\"{a}ssler and Tuomas Orponen for many motivating discussions and helpful comments.

\section{Preliminaries}

The next lemma is a multivariable variant of \cite[Lemma 3.1]{marstrandtype} and \cite[Lemma 3.5]{2022arXiv220702259P}.
\begin{lemma} \label{local}
Assume $\eta>0$, $n \ge 1$.
Let
$F \subset C^{2}([0,1]^{n-1},[0,1])$ be a cinematic family with cinematic constant $K$ and modulus of continuity $\az$ and $f,g \in F$ with $t:=\|f-g\|_{C^2( I^{n-1}) } \le 1$ where $I^{n-1} \subset \R^{n-1}$ is a dyadic cube.
  If $U \subset I^{n-1}$ is a (closed) dyadic cube such that its diameter is smaller than $\az(\eta/2)$, and $k$ is one of the three functions $|f-g|, |\nabla (f-g)|, |\nabla_\xi \nabla_\xi (f-g)|$ for some $\xi \in S^{n-2}$, then $k$ satisfies either one of the following two inequalities:
   \begin{itemize}
     \item[(S)] $ k(x) < \eta\|f-g\|_{C^2( I^{n-1}) } \mbox{ for all } x \in U$.
     \item[(L)] $k(x) \ge \eta \|f-g\|_{C^2( I^{n-1})}/2 \mbox{ for all } x \in U$.
   \end{itemize}
\end{lemma}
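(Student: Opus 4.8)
The plan is to exploit the definition of the cinematic family — in particular items (1) and (4) — to show that on a cube $U$ of diameter at most $\az(\eta/2)$, any one of the three functions $k \in \{|f-g|,\,|\nabla(f-g)|,\,|\nabla_\xi\nabla_\xi(f-g)|\}$ cannot oscillate by more than $\tfrac{\eta}{2}\|f-g\|_{C^2(I^{n-1})}$ across $U$. Once such an oscillation bound is in hand, the dichotomy is immediate: either $k(x_0) < \tfrac{\eta}{2}\|f-g\|_{C^2(I^{n-1})}$ for some $x_0 \in U$, in which case $k(x) < \eta\|f-g\|_{C^2(I^{n-1})}$ for all $x \in U$ by the oscillation bound, giving alternative (S); or $k(x) \ge \tfrac{\eta}{2}\|f-g\|_{C^2(I^{n-1})}$ for every $x \in U$, which is alternative (L).

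The key step is therefore the uniform oscillation estimate $\operatorname{osc}_U k \le \tfrac{\eta}{2}\|f-g\|_{C^2(I^{n-1})}$. I would treat the three cases separately. For $k = |\nabla_\xi\nabla_\xi(f-g)|$ this is essentially a restatement of condition (4) in Definition \ref{cine}: for $x,y \in U$ we have $|x-y| \le \diam U \le \az(\eta/2)$, hence $|\nabla_\xi\nabla_\xi(f-g)(x) - \nabla_\xi\nabla_\xi(f-g)(y)| \le \eta/2 \le \tfrac{\eta}{2}\|f-g\|_{C^2(I^{n-1})}$ (using that $\|f-g\|_{C^2(I^{n-1})} \ge$ a constant, or absorbing — more carefully, since $\az(s) \le K^{-1}s$ we should track that the relevant smallness of $|x-y|$ is comparable to $\eta$, not $\eta\|f-g\|$; I would run condition (4) with the rescaled pair $\tilde\eta = \eta\|f-g\|_{C^2}/2$, which is legitimate because $(f-g)/\|f-g\|_{C^2}$ has $C^2$-norm $1$ and condition (4) is applied to differences of functions in $F$ — alternatively, note condition (4) is stated for $f,g \in F$ so one uses it directly and then uses $\|f-g\|_{C^2} \le 1$, which only helps). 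For $k = |\nabla(f-g)|$ and $k = |f-g|$, the oscillation is controlled by the mean value theorem: $\operatorname{osc}_U |\nabla(f-g)| \le \|\nabla^2(f-g)\|_{L^\infty(U)}\diam U \le \|f-g\|_{C^2(I^{n-1})}\az(\eta/2) \le \|f-g\|_{C^2(I^{n-1})}K^{-1}(\eta/2)$, and similarly $\operatorname{osc}_U|f-g| \le \|\nabla(f-g)\|_{L^\infty}\diam U \le \|f-g\|_{C^2(I^{n-1})}K^{-1}(\eta/2)$; both are $\le \tfrac{\eta}{2}\|f-g\|_{C^2(I^{n-1})}$ since $K \ge 1$.

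The main obstacle — really a bookkeeping subtlety rather than a deep difficulty — is making the constants in the three cases consistent, since for the second-derivative case the natural bound coming from condition (4) is an \emph{absolute} $\eta/2$ rather than the \emph{relative} $\tfrac{\eta}{2}\|f-g\|_{C^2}$ appearing in (S) and (L), whereas for the first two cases the bound is automatically relative. The clean fix is to apply condition (4) of Definition \ref{cine} not to $f,g$ themselves but to observe that the hypothesis ``$|x-y| \le \az(\eta)$'' should be invoked with the parameter adjusted so that the resulting estimate has the right homogeneity; concretely, since the statement only claims (S)/(L) with the weaker constant $\eta$ (not $\eta/2$) on the (S) side, the slack between $\eta/2$ and $\eta$ absorbs any discrepancy, and using $\diam U < \az(\eta/2)$ (strict, with a factor-$2$ room) together with $\|f-g\|_{C^2(I^{n-1})} \le 1$ is enough to push every case through. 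I would write the argument case-by-case, each time combining ``$\diam U$ small'' with either the mean value theorem or condition (4), and then conclude the dichotomy from the oscillation bound as above.
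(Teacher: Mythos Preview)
Your approach is the same as the paper's: establish that $k$ oscillates by at most $\tfrac{\eta}{2}t$ on $U$ (where $t=\|f-g\|_{C^2(I^{n-1})}$) and read off the dichotomy. For $k=|f-g|$ and $k=|\nabla(f-g)|$ your mean-value estimate $\operatorname{osc}_U k \le \|f-g\|_{C^2}\cdot\diam U \le t\,\alpha(\eta/2)\le tK^{-1}\eta/2$ is correct and is exactly the content of the paper's rescaling trick (the paper sets $l=k/t$, notes $\mathrm{Lip}\,l\le 1$, and bounds $|l(x)-l(x_0)|\le |x-x_0|\le\alpha(\eta/2)\le K^{-1}\eta/2$). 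So for these two cases you match the paper.

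For $k=|\nabla_\xi\nabla_\xi(f-g)|$ you correctly isolate the real difficulty, but neither of your fixes works. Invoking condition~(4) with $\tilde\eta=\eta t/2$ would require $\diam U\le\alpha(\eta t/2)$, which is \emph{smaller} than the hypothesis $\alpha(\eta/2)$ since $t\le 1$ and $\alpha$ is increasing; so you cannot apply it. And ``$t\le 1$ only helps'' goes the wrong way: condition~(4) gives an absolute oscillation $\le\eta/2$, whereas you need the \emph{stronger} relative bound $\le\eta t/2$; from $k(x_0)<\eta t/2$ you only get $k(x)<\eta t/2+\eta/2$, which is $<\eta t$ precisely when $t>1$. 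The paper's proof shares this gap: it writes ``the proof of the case $k=|\nabla_\xi\nabla_\xi(f-g)|$ is the same'', but the rescaled $l=k/t$ no longer has $\mathrm{Lip}\,l\le 1$ since third derivatives are not controlled. In practice this does not matter for the paper, because the only consumer of the lemma, Corollary~\ref{uniq}, obtains alternative~(L) for $|\nabla_\xi\nabla_\xi h|$ directly from the cinematic condition~(3) once (S) holds for $|h|$ and $|\nabla h|$, bypassing this case of Lemma~\ref{local} entirely.
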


\begin{proof}
 Since $\diam U < \az(\eta/2)$, for any $x,x' \in U$, $|x-x'| < \az(\eta/2)$.
 By Definition \ref{cine}, we know
 that $|\nabla_\xi \nabla_\xi (f-g)(x)-\nabla_\xi \nabla_\xi (f-g)(x')| <\eta/2$.
 Moreover, since $F$ is contained in a ball of diameter $K$, we know
 $\|f-g\|_{C^{2}([0,1]^{n-1})} \le K$. In particular, we know Lip$(f-g)(x) \le K$ and Lip$|\nabla(f-g)|(x)\le K$ for all $x \in U$. Recalling from Definition \ref{cine} that $\az(s) \le K^{-1}s$, we know
 $$  |k(x)-k(x')| \le \frac{\eta}2, \quad \forall x,x' \in U,  $$
 where $k$ is one of the three functions $|f-g|, |\nabla (f-g)|, |\nabla_\xi \nabla_\xi (f-g)|$.

 Next, fix $U \subset I^{n-1}$ with $\diam U < \eta/2$, and suppose that the
alternative (L) fails for $k = f - g$. So there is $x_0 \in U$ such that $|k(x_0)| < \eta \|f-g\|_{C^2( I^{n-1})}/2$. And for any $x \in U$, since $|x-x_0| < \az(\eta/2)$, by defining $l= k/\|f-g\|_{C^2(I^{n-1})}= k/t$, we deduce that Lip$_{I^{n-1}}l \le 1$ and
$$ \frac{|k(x)|}{t} \le \frac{|k(x_0)|}{t}+ \left| \frac{|k(x)|}{t} - \frac{|k(x_0)|}{t}\right| < \frac{\eta}{2} + |l(x)-l(x_0)| \le \frac{\eta}{2} +\mbox{ [Lip$_{I^{n-1}}l$}] |x-x_0|  \le \eta.$$
 Thus the alternative (S) holds for $k$. The proof of the cases $k = |\nabla (f-g)|$ and $k= |\nabla_\xi \nabla_\xi (f-g)|$ are the
same.
\end{proof}

We denote by $\lz U$ the cube with the same center as $U$ and has side length $\lz$ times as that of $U$ for $\lz>0$.
In the following proof, for technical reasons, given a cinematic family $F \subset C^{2}([0,1]^{n-1},[0,1])$ with cinematic constant $K$, we first extend $F$ to be a cinematic family in $C^{2}([-\frac{\az(K^{-1}/6)}2,1+\frac{\az(K^{-1}/6)}2]^{n-1})$. Thus $F$ will have a new cinematic constant $\bar K \ge K$ and a new modulus of continuity $\bar \az \le \az$.
Since $\bar\az,\az$ are increasing functions, $\bar\az(\bar K^{-1}/6) \le \az(\bar K^{-1}/6) \le \az(K^{-1}/6)$. Thus $F$ is also a cinematic family in $C^{2}([-\frac{\bar\az(\bar K^{-1}/6)}2,1+\frac{\bar\az(\bar K^{-1}/6)}2]^{n-1})$ with cinematic constant $\bar K$ and modulus of continuity $\bar \az$.
By abuse of notation, we still write $\bar K=K$ and $\bar \az =\az$. As a result, letting $I_F^{n-1}:=[-\frac{\az(K^{-1}/6)}2, 1+\frac{\az(K^{-1}/6)}2]^{n-1}$, we know $F \subset C^{2}(I_F^{n-1})$ is a cinematic family of cinematic constant $K$ and modulus of continuity $\az$.

%
The above lemma gives the following corollary.

\begin{cor} \label{uniq}
  Let
$F \subset C^{2}(I_F^{n-1})$ be a cinematic family with cinematic constant $K$ and modulus of continuity $\az$, $f,g \in F$ and $h:=f-g$. Then for each dyadic cube $U \subset [0,1]^{n-1} \subset I_F^{n-1}$ with $\diam U < \frac{\az(K^{-1}/6)}2$,

  {\rm(A):} at least one of the following three conditions hold: \begin{enumerate}
  \item[(i)]
$|h(x)| \ge \frac{1}{3K} \|h\|_{C^2(I_F^{n-1}) }$ for all $x \in  2U$.
  \item[(ii)]
$|\nabla h(x)| \ge \frac{1}{3K} \|h\|_{C^2( I_F^{n-1}) }$ for all $x \in  2U$.
  \item[(iii)]
   \begin{equation}\label{L}
    |\nabla_\xi \nabla_\xi h (x)| \ge \frac{1}{3K} \|h\|_{C^2( I_F^{n-1}) } \qquad \mbox{for all } x \in  2U, \xi \in S^{n-2}.
  \end{equation}
\end{enumerate}

  {\rm(B):} Moreover, if $|\nabla_\xi \nabla_\xi h|$ satisfies \eqref{L}, then there exists at most one point $x \in 2U$ such that $\nabla h(x)=0$ and $h$ is strictly convex/concave in $2U$.
\end{cor}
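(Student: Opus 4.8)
The plan is as follows. First dispose of the trivial case: if $f=g$ then $h:=f-g\equiv 0$ and (A)(i) holds vacuously, so assume $f\neq g$ and write $\|h\|:=\|h\|_{C^2(I_F^{n-1})}>0$. The geometric point underlying everything is that, since $U\subset[0,1]^{n-1}$ and $\diam U<\tfrac12\az(K^{-1}/6)$, the doubled cube $2U$ is still contained in $I_F^{n-1}$ and has $\diam(2U)<\az(K^{-1}/6)$. Consequently, taking $\eta$ to be a fixed multiple of $K^{-1}$ — concretely $\eta=K^{-1}/3$, so that $\az(\eta/2)=\az(K^{-1}/6)>\diam(2U)$ — Lemma \ref{local} is applicable on the cube $2U$ (its proof uses only that the cube has small enough diameter) to each of the three functions $|h|$, $|\nabla h|$ and $|\nabla_\xi\nabla_\xi h|$, $\xi\in S^{n-2}$.

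For part (A): Lemma \ref{local} asserts that each of these three functions, on $2U$, satisfies either alternative (S) — it is uniformly $<\eta\|h\|$ there — or alternative (L) — it is uniformly $\ge\tfrac\eta2\|h\|$ there. If $|h|$ is of type (L) we obtain (i), and if $|\nabla h|$ is of type (L) we obtain (ii). If neither, then $|h|<\eta\|h\|$ and $|\nabla h|<\eta\|h\|$ throughout $2U$, and then I would invoke the cinematic lower bound Definition \ref{cine}(3): since $2U\subset I_F^{n-1}$, for \emph{every} $x\in 2U$ and \emph{every} $\xi\in S^{n-2}$ one has $|h(x)|+|\nabla h(x)|+|\nabla_\xi\nabla_\xi h(x)|\ge K^{-1}\|h\|$, whence $|\nabla_\xi\nabla_\xi h(x)|\ge(K^{-1}-2\eta)\|h\|\gtrsim K^{-1}\|h\|$ uniformly in $\xi$; this gives (iii). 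Producing the exact constant $\tfrac1{3K}$ in (i)--(iii) is then a matter of bookkeeping: the diameter bound, together with the Lipschitz estimates built into Definition \ref{cine}(1),(4), makes the oscillation over $2U$ of each of the three functions at most $\tfrac1{6K}\|h\|$, and one optimizes the choice of $\eta$ and of the two thresholds accordingly; this routine arithmetic I would suppress.

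For part (B): suppose \eqref{L} holds, i.e. $|\langle\nabla^2 h(x)\xi,\xi\rangle|=|\nabla_\xi\nabla_\xi h(x)|\ge\tfrac1{3K}\|h\|>0$ for all $x\in 2U$ and all $\xi\in S^{n-2}$. The map $(x,\xi)\mapsto\langle\nabla^2 h(x)\xi,\xi\rangle$ is continuous and nowhere zero on $2U\times S^{n-2}$; since $n\ge3$ the sphere $S^{n-2}$ is connected, hence so is $2U\times S^{n-2}$, and therefore this map has a constant sign there. After replacing $h$ by $-h$ if needed, $\nabla^2 h(x)\ge\tfrac1{3K}\|h\|\,\mathrm{Id}$ as symmetric matrices for every $x\in 2U$, which (as $2U$ is convex) means $h$ is strongly, in particular strictly, convex on $2U$. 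Finally, for $x\neq y$ in $2U$ I would write $\nabla h(x)-\nabla h(y)=\int_0^1\nabla^2 h\big(y+t(x-y)\big)(x-y)\,dt$ and pair with $x-y$ to get $\langle\nabla h(x)-\nabla h(y),x-y\rangle\ge\tfrac1{3K}\|h\|\,|x-y|^2>0$; hence $\nabla h$ is injective on $2U$ and so vanishes there at most once.

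The main obstacle is the constant chase in part (A): one must ensure that the two ``small'' alternatives (S) for $|h|$ and $|\nabla h|$ coming from Lemma \ref{local}, once added, stay strictly below the cinematic threshold $K^{-1}\|h\|$ of Definition \ref{cine}(3), since it is precisely this that forces (iii) when (i) and (ii) both fail — and this is what ties the admissible diameter of $U$ to $\az(K^{-1}/6)$. Part (B) is soft, the only substantive input being the connectedness of $S^{n-2}$, which is where $n\ge3$ is invoked there.
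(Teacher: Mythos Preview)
Your proof is correct and follows the same approach as the paper: apply Lemma~\ref{local} on $2U$ with $\eta=K^{-1}/3$, use Definition~\ref{cine}(3) to force (iii) when both $|h|$ and $|\nabla h|$ are of type (S), and for (B) exploit continuity and sign-definiteness of the Hessian quadratic form to get strict convexity/concavity and uniqueness of the critical point. The only cosmetic differences are that the paper derives uniqueness in (B) via the mean value theorem on $\nabla_e h$ along the segment joining two putative critical points (producing a zero of $\nabla_e\nabla_e h$, contradicting \eqref{L}) rather than your integral/monotonicity argument, and it does not single out $n\ge3$ there --- indeed the quadratic form $\xi\mapsto\nabla_\xi\nabla_\xi h(x)$ is even in $\xi$, so connectedness of $S^{n-2}$ is not actually needed for the sign argument.
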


\begin{proof}
   To see (A),  we apply Lemma \ref{local} to $h$ with $I^{n-1}=I_F^{n-1}$ and $\eta = K^{-1}/3$. As a result, we know for each dyadic cube $U \subset [0,1]^{n-1}$ with diameter $\diam U < \frac{\az(\eta/2)}2 = \frac{\az(K^{-1}/6)}2$ (thus $\diam (2U) < \az(\eta/2)$),
 either (S) or (L) holds for $k$ and all $x \in 2U$($\subset I_F^{n-1}$), where $k$ is one of the
functions $h$, $\nabla h$ or $\nabla_\xi \nabla_\xi h$, $\xi \in S^{n-2}$. On the other hand,
since $F$ is a cinematic family of functions, if the alternative (L) does not hold for the functions $h$ and $\nabla h$, then by Definition \ref{cine}(3), for all $\xi \in S^{n-2}$, (L) holds for $\nabla_\xi \nabla_\xi (f-g)$.

   To see the assertion (B), assume on the contrary that there exists two points $x,y \in 2U$ such that $\nabla h(x)=\nabla h(y)=0$.
   In particular, letting $e=\frac{x-y}{|x-y|}$, we have $\nabla_e h(x)=\nabla_e h(y)=0$.
   Let $\gz:[0,|x-y|] \to 2U$, $\gz(t)=(1-\frac{t}{|x-y|})x + \frac{t}{|x-y|} y$. Then $h\circ \gz  \in C^2([0,|x-y|],\R)$, $(h\circ \gz)'(t) = \nabla h(\gz(t)) \cdot \dot \gz(t) = \nabla_e h(\gz(t))$ and $(h\circ \gz)''(t)  = \nabla_e\nabla_e h(\gz(t))$. By Taylor's formula, there exists $t_0 \in [0,|x-y|]$ such that
   $$ 0= \nabla_e h(x)-\nabla_e h(y) = |x-y| \nabla_e\nabla_e h(\gz(t_0)).  $$
   Hence $\nabla_e\nabla_e h(\gz(t_0))=0$, which contradicts \eqref{L}.

   To see $h$ is strictly convex/concave in $2U$, recalling $h \in C^2(2U)$, we know $\nabla_\xi \nabla_\xi h (x)$ is continuous with respect to $x$ and $\xi$. Thus the assumption \eqref{L} holds implies that $\nabla_\xi \nabla_\xi h (x) >\frac{1}{3K} \|h|_{\lz U}\|_{C^2( 2 U) } >0 $ or $\nabla_\xi \nabla_\xi h (x) < -\frac{1}{3K} \|h|_{\lz U}\|_{C^2( 2 U) } <0 $ for all $x \in 2U$ and $\xi \in S^{n-2}$.
   If the first case happens, for all $\xi \in S^{n-2}$ and $t$ sufficiently small,
we have
$$  h(x+t\xi)  = h(x) +  t \nabla_\xi h(x) + t^2 \nabla_\xi \nabla_\xi h(x) + o(t^2) > h(x) +  t \nabla_\xi h(x).   $$
Thus $h$ is strictly convex at all $x \in 2U$. Similarly, the second case implies that $h$ is concave. The proof is complete.
\end{proof}

%
%

We recall the following facts from ODE.
Consider the following dynamical system in a domain $\Omega \subset \R^{n-1}$:
\begin{equation}\label{dyn1}
  \dot x = f(x)  \quad x \in \Omega
\end{equation}
where $f \in C^1(\Omega, \R^{n-1})$.
Define the flow of \eqref{dyn1} by $\Phi(x,s)$, i.e. for each $x \in \Omega$, $\pa_s \Phi(x,s) = f(\Phi(x,s))$ and $\Phi(x,0)= x$.
Write $\nabla_x \Phi(x,s)= (\pa_{x_1}\Phi(x,s), \cdots, \pa_{x_{n-1}}\Phi(x,s))$. By ODE theory (e.g. \cite[Theorem in page 154]{MR1629775}), we know $\Phi$ is $C^1$ and
$$ |\nabla_{x_i} \Phi(x,s)| \le \|f\|_{L^\fz(\Omega)} + \|\nabla f\|_{L^\fz(\Omega)} \int_{s_0}^s |\nabla_{x_i} \Phi(x,\tau)| \, d\tau \quad i=1,\cdots,n-1 $$
for any $s,s_0$ such that the flow $\Phi$ is defined.
Using the Gronwall's inequality one has
\begin{equation}\label{flowlip}
  |\nabla_x \Phi(x,s)| = |(\pa_{x_1}\Phi(x,s), \cdots, \pa_{x_{n-1}}\Phi(x,s))| \lesssim \|f\|_{L^\fz(\Omega)} e^{\|\nabla f\|_{L^\fz(\Omega)}}  \lesssim \|f\|_{C^1(\Omega)}.
\end{equation}


We recall that $A^\circ$ is the interior of a set $A \subset \R^n$.

\begin{lemma} \label{extend}
  Let $h$ be as in Corollary \ref{uniq} and assume $\nabla h \ne 0$ in $2U$. Consider the system
  \begin{equation}\label{tan0}
     \dot \gz(s) = \nabla h ( \gz(s)) \quad   in \ 2U.
  \end{equation}
  Then each solution $\gz$ of \eqref{tan0} can be extended to the boundary of $2U$.
\end{lemma}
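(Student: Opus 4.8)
\emph{Proof proposal.} The plan is to run the gradient flow \eqref{tan0} forward from a prescribed point of $2U$ and to show it must reach $\partial(2U)$ after a finite parameter time, using only that $\nabla h$ does not vanish on the \emph{compact} set $2U$ together with the boundedness of $h$ there; the strict convexity/concavity coming from Corollary \ref{uniq}(B) is not actually needed.

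First I would record the standing facts. Since $h=f-g$ with $f,g\in C^2(I_F^{n-1})$, the vector field $V:=\nabla h$ is $C^1$ on $I_F^{n-1}$; moreover $2U$ is a closed (hence compact) dyadic cube contained in the interior of $I_F^{n-1}$, because $\diam U<\tfrac{\az(K^{-1}/6)}{2}$ forces $2U$ to protrude from $[0,1]^{n-1}$ by strictly less than the margin $\tfrac{\az(K^{-1}/6)}{2}$ defining $I_F^{n-1}$. By Picard--Lindel\"of there is, through each point of $2U$, a unique solution of $\dot\gz=V(\gz)$, and as long as it remains in the compact set $2U$ it cannot blow up, so it is defined on a maximal interval $[0,S)$ with $\gz([0,S))\subset 2U$ and $S\in(0,+\infty]$. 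Put $c_{0}:=\min_{x\in 2U}|\nabla h(x)|$, which is strictly positive by compactness and the hypothesis $\nabla h\ne 0$ in $2U$, and $M:=\max_{x\in 2U}|\nabla h(x)|$, $H:=\max_{x\in 2U}|h(x)|$.

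The key computation is the monotonicity of $h$ along the flow:
\[
\frac{d}{ds}\,h(\gz(s))=\nabla h(\gz(s))\cdot\dot\gz(s)=|\nabla h(\gz(s))|^{2}\ge c_{0}^{2},\qquad s\in[0,S).
\]
Integrating gives $h(\gz(s))\ge h(\gz(0))+c_{0}^{2}s$, and since $|h(\gz(s))|\le H$ this forces $s\le 2H/c_{0}^{2}$; hence $S\le 2H/c_{0}^{2}<\infty$. On $[0,S)$ we have $|\dot\gz(s)|=|\nabla h(\gz(s))|\le M$, so $\gz$ is $M$-Lipschitz, hence uniformly continuous, and therefore extends to a continuous map $\gz\colon[0,S]\to 2U$ (using that $2U$ is closed). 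Finally $\gz(S)\in\partial(2U)$: if instead $\gz(S)$ were an interior point of $2U$, then Picard--Lindel\"of applied at $\gz(S)$, together with continuity of $\gz$, would let us prolong the solution beyond time $S$ while it still lies in $2U$, contradicting the maximality of $S$. Thus $\gz$ has been extended to the boundary of $2U$. The same reasoning applied to $-h$ (equivalently, to the backward flow) yields the extension in the negative-time direction, should it be needed.

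I do not expect a genuine obstacle here; the two points requiring care are (i) verifying that $2U$ is compact and sits inside the region where $\nabla h$ is $C^1$, so that both $c_{0}>0$ and global existence-in-$2U$ are available, and (ii) the endpoint bookkeeping — using the uniform Lipschitz bound to produce the continuous limit $\gz(S)$ and the maximality of $S$ to place it on $\partial(2U)$. One could alternatively track $|\nabla h(\gz(s))|$ directly via $\frac{d}{ds}|\nabla h(\gz(s))|^{2}=2\,\nabla_{\nabla h}\nabla_{\nabla h}h(\gz(s))$ and the lower bound \eqref{L}, but the estimate on $h$ itself is shorter and uses nothing beyond $\nabla h\ne 0$.
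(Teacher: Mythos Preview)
Your proof is correct and follows essentially the same approach as the paper: both use the monotonicity identity $\frac{d}{ds}h(\gz(s))=|\nabla h(\gz(s))|^{2}\ge c_{0}^{2}$ together with the boundedness of $h$ on the compact cube $2U$ to force a finite maximal time, then conclude that the solution reaches $\partial(2U)$. Your version is slightly more explicit about the endpoint bookkeeping (the Lipschitz extension to $s=S$ and the maximality argument), but the core argument is identical.
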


\begin{proof}
  Given any $x \in (2U)^\circ$, by the classical ODE theory, there exists a unique solution $\gz_x : (-\ez,\ez) \to 2U$ with $\gz_x(0)=x$. We show $\gz_x$ can be extended to a curve (still denote by $\gz_x$) defined on some interval $[a_x,b_x]$ such that $\gz_x(a_x), \gz_x(b_x) \in \pa (2U)$.

  Indeed, recalling that $\|h\|_{C^2(2U)} \le K$, we know $|h| \le K$ in $2U$. On the other hand, by assumption, $\nabla h \ne 0$ in  $2U$ implies that $\min_{x \in 2U} |\nabla h(x)| \ge a$ for some $a >0$. Thus,
  $$ 2K \ge h(\gz_x(t)) - h(x) = \int_0^t \nabla h (\gz_x(s)) \cdot \dot \gz_x(s)\, ds  = \int_0^t |\nabla h (\gz_x(s))|^2 \, ds \ge a^2 t.  $$
  This implies that $b_x \le 2K/ a^2$ and $\gz_x(b_x) \in \pa (2U)$. Similarly, we obtain that $\gz_x(a_x) \in \pa (2U)$.
\end{proof}

\begin{lemma} \label{tran3}
  Let $h$ be as in Corollary \ref{uniq} and the case (A)(ii) holds for $h$.
  Consider the system
  \begin{equation}\label{tran}
     \dot \gz(s) = \frac{\nabla h ( \gz(s))}{|\nabla h ( \gz(s))|}  \quad   in \ U.
  \end{equation}
  Set
  $$\pa U^+:= \{ x \in \pa U : \mbox{there exists a maximal solution $\gamma_x \in C^2( [0,b_x] ,U)$ to \eqref{tran} starting from $x$} \}.$$
  Then the family $\{\gz_x\}_{x \in \pa U^+}$ foliates $U$, that is,
  $$ U= \bigcup_{x \in \pa U^+}\gamma_{x}, \text{ and } \gamma_{x_1} \cap \gamma_{x_2}=\emptyset \text{ if $x_1 \ne x_2$.} $$
\end{lemma}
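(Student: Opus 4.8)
The plan is to read the system \eqref{tran} as the flow of the unit vector field $V:=\nabla h/|\nabla h|$ and to extract the foliation from standard ODE theory, the only non-routine point being the behaviour of the orbits at $\partial U$, which is where the smallness of $U$ is used. First I would set things up. Since we are in case (A)(ii), $|\nabla h(x)|\ge \tfrac1{3K}\|h\|_{C^{2}(I_F^{n-1})}>0$ for every $x\in 2U$ (we may assume $h\not\equiv 0$, otherwise there is nothing to prove), and $h$ is $C^{2}$ on a neighbourhood of $\overline{2U}$, so $V$ is a well-defined $C^{1}$ unit vector field there, in particular locally Lipschitz. By standard ODE theory every point of $2U$ lies on a unique maximal integral curve $\Gamma$ of $V$, the images of two distinct orbits are disjoint, and each $\Gamma$ is automatically $C^{2}$ because $\dot\Gamma=V\circ\Gamma$ is $C^{1}$.

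Next I would record, exactly as in the proof of Lemma \ref{extend}, that $h$ is strictly increasing along every orbit, $\tfrac{d}{ds}h(\Gamma(s))=\langle\nabla h(\Gamma(s)),V(\Gamma(s))\rangle=|\nabla h(\Gamma(s))|\ge\tfrac1{3K}\|h\|_{C^{2}}>0$, and since $\operatorname{osc}_{2U}h\le 2\|h\|_{C^{2}}$, each orbit spends only a bounded parameter interval inside $2U$. Because $\overline U$ is a compact subset of the open set carrying $V$ and $\overline U\subset(2U)^{\circ}$, the orbit through any $p\in U$ leaves $\overline U$ in finite time both forward and backward; hence $\Gamma^{-1}(\overline U)$ is a finite union of compact intervals, and the component $[a,b]$ containing the parameter of $p$ satisfies $\Gamma(a),\Gamma(b)\in\partial U$, since an endpoint of such a component cannot lie in the open set $U$. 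Putting $x:=\Gamma(a)\in\partial U$ and letting $\gamma_{x}$ be the time-shift $\Gamma(a+\,\cdot\,)$ restricted to the maximal sub-interval on which the curve stays in $\overline U$ with interior in $U$, we obtain a maximal solution of \eqref{tran} starting at $x$, so $x\in\partial U^{+}$; moreover $p=\gamma_{x}(s_{0})$ for some interior parameter $s_{0}$ because $p$ lies in the interior of its $\overline U$-component. This gives $U\subset\bigcup_{x\in\partial U^{+}}\gamma_{x}$, and the reverse containment is immediate from the construction.

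For disjointness, suppose $q\in\gamma_{x_{1}}\cap\gamma_{x_{2}}$. By uniqueness both $\gamma_{x_{i}}$ are time-shifts of the single orbit $\Gamma_{q}$ through $q$. If $q\in U$, then each $\gamma_{x_{i}}$ is the closure of the connected component of $\Gamma_{q}^{-1}(U)$ containing the parameter of $q$ — the same component for both — so $\gamma_{x_{1}}=\gamma_{x_{2}}$, and comparing starting points (both follow $V$) gives $x_{1}=x_{2}$. The case $q\in\partial U$ is the crux: it would force $\Gamma_{q}$ to meet a face $\{x_{i}=c\}$ of $U$ at $q$ while staying in $\overline U$ on both sides of $q$, i.e. to be tangent to that face from the interior, and one must exclude this. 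This is where the hypotheses on $U$ enter: since $\diam U<\tfrac12\az(K^{-1}/6)\le\tfrac1{12K^{2}}$ and $|\nabla h|\ge\tfrac1{3K}\|h\|_{C^{2}}$ while $\|\nabla^{2}h\|_{\infty}\le\|h\|_{C^{2}}$, the gradient $\nabla h$ varies over $2U$ by at most $\|h\|_{C^{2}}\diam(2U)$, which is at most $|\nabla h|/(2K)$; hence the direction field $V$ varies over $2U$ by at most $1/K$ in norm, i.e. (after harmlessly enlarging $K$ to exceed an absolute constant — a cinematic family with constant $K$ is one with any larger constant) by an arbitrarily small angle, so every orbit is an essentially straight arc across $2U$ and a standard convexity argument shows it meets the convex cube $\overline U$ in a single sub-arc with endpoints on $\partial U$ and interior in $U$; in particular no orbit is tangent to $\partial U$ from inside. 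Granting this, the case $q\in\partial U$ cannot occur unless $x_{1}=x_{2}$, and therefore $\{\gamma_{x}\}_{x\in\partial U^{+}}$ partitions $U$. I expect this last geometric step — quantifying the transversality of the gradient flow to $\partial U$ from the smallness of $U$ relative to the cinematic constant — to be the main obstacle; everything else is routine ODE theory together with the monotonicity argument already used in Lemma \ref{extend}.
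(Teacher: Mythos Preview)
Your argument for the covering $U=\bigcup_{x\in\partial U^{+}}\gamma_{x}$ is correct and is exactly what the paper does: use that $|\nabla h|>0$ on $2U$, invoke the extension argument of Lemma~\ref{extend} to run the orbit through an interior point backward until it hits $\partial U$, and declare that hitting point to be $x$. The paper's own proof stops there; it does not argue disjointness at all.

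You go further and try to prove disjointness, and you correctly isolate the obstruction as tangency of an orbit to a face of $U$. But your ``standard convexity argument'' does not close this gap. Small variation of the direction field $V$ does \emph{not} preclude an orbit from touching, or even lying inside, a face of the cube. The cleanest counterexample already lives in the affine cinematic family used for Theorem~\ref{mattila}: take $h(x_{1},\dots,x_{n-1})=x_{1}$, so $\nabla h\equiv e_{1}$ and (A)(ii) holds trivially. Every orbit is a straight segment parallel to $e_{1}$, and the orbit along the top face $\{x_{n-1}=c\}$ of $U$ is entirely contained in $\partial U$. Then every point $(a,\,\cdot\,)$ on that face with $a$ strictly less than the right endpoint belongs to $\partial U^{+}$, and the associated arcs $\gamma_{(a,\cdot)}$ are nested, hence pairwise overlapping. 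So not only does your transversality claim fail, the disjointness assertion in the lemma is literally false as stated. Your appeal to enlarging $K$ cannot rescue this: the obstruction persists for exactly constant $V$.

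None of this damages the paper. In the only place the lemma is used (Case~1 of the proof of Lemma~\ref{dznbhd1}), the foliation is fed into an integral inequality where overlap can only make the right-hand side larger; so what is actually needed is the covering statement, which both you and the paper establish. The honest fix is either to weaken the lemma to ``$U=\bigcup_{x\in\partial U^{+}}\gamma_{x}$'' and drop disjointness, or to redefine $\partial U^{+}$ so that only the \emph{earliest} boundary entry of each orbit is retained (equivalently, quotient by the orbit relation), in which case disjointness is tautological.
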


\begin{proof}
  Note that (A)(ii) in Corollary \ref{uniq} holds implies that $\nabla h \ne 0$ in $U$. Thus $\frac{\nabla h ( \gz(s))}{|\nabla h ( \gz(s))|}$ is well defined in $2U$. Moreover, up to a parametrization, the maximal solution $\gamma_x$ is the same as the one of the following system
  $$ \dot \gz(s) = \nabla h ( \gz(s))  \quad   in \ U. $$
  Thus for any $y \in U^\circ$, there exists a unique solution $\gz_y : (-\ez,\ez) \to U$ with $\gz_y(0)=y$. By Lemma \ref{extend}, we know $\gz_y$ extends to a maximal solution defined on $[a_y,b_y]$ (still denote by $\gz_y$) such that $\gz_y(a_y) \in \pa U$. This implies $\gz_y(a_y) \in \pa U^+$. Let $x=\gz_y(a_y)$. We know $y \in \gz_x$. Thus, $U = \cup \{\gz_x\}_{x \in \pa U^+}$.
\end{proof}

\begin{remark} \label{tran2}\rm Under the assumption as in Lemma \ref{tran3},
  note that the dyadic cube $U \subset \R^{n-1}$ has $2^{n-1}$ facets $\pa U_1, \cdots, \pa U_{2^{n-1}}$. Let
  $$ \pa U^+_i :=  \pa U_1 \cap \pa U^+ \mbox{ and } U_i:= \bigcup_{x \in \pa U^+_i} \gz_x \quad i=1,\cdots,2^{n-1}. $$
  For each $\pa U_i$, by possibly reordering and translating the coordinates, we can assume $\pa U_i \subset$ span $\{x_1,\cdots,x_{n-2}\}$. For each $x=(x_1,\cdots,x_{n-1}) \in \pa U_i$, write $\bar x=(x_1,\cdots,x_{n-2})$. Thus we can equip $U_i$ with a new coordinate by letting $y=(\bar x,s)$ for all $y \in U_i$ where $x \in \pa U_i^+$ and $s \in [0,b_x]$ is unique pair such that $y = \gz_x(s)$.
\end{remark}

The following lemma is a slight variant of
\cite[Lemma 3.8]{2022arXiv220702259P} and the proof is essentially the same as \cite[Lemma 3.8]{2022arXiv220702259P}.

\begin{lemma}  \label{pyz1}
  Let $[0,a] \subset \R$ and $h \in C^2([0,a])$ with $\|h\|_{C^2([0,a])}=t$.
  For $\dz>0$,
  write
  $$  E_{2\dz}=\{s \in [0,a] : |h(s)| \le 2\dz\}.  $$

  Then, for each $0<\dz < c_1 t$ for some constant $0<c_1<1$ and $4c_1 \le c_2<1$,
  \begin{enumerate}
    \item if
    $$\lz_1:= \inf_{s \in [0,a]}\{|h(s)| + |h'(s)|\} \ge c_2 t,$$ then $|E_{2\dz}| \lesssim \dz/t$;
    \item assume $\lim_{s \to 0+}h'(s)=0$ and
    $$ h''(s) \ge c_2 t, \quad s \in (0,a).   $$
    Let
        $$\lz_2:= |h(0)|, $$
         then $E_\dz \subset [0,d]$ with $d \le c_2^{-1}\sqrt{(\lz_2+\dz)/ t}$. Moreover, $E_\dz$ is a closed interval of length $\lesssim \dz/\sqrt{(\lz_2+\dz)t}$.
  \end{enumerate}

\end{lemma}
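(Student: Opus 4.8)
\textbf{Proof proposal for Lemma \ref{pyz1}.}

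The plan is to analyze the two cases separately, in each case exploiting the control on a low-order derivative of $h$ together with the $C^2$ bound $\|h\|_{C^2}=t$, and to reduce everything to elementary one-variable calculus. Throughout I would use that $\|h''\|_{L^\infty([0,a])}\le t$ and, in case (1), also $\|h'\|_{L^\infty}\le t$.

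For part (1): the hypothesis $\lz_1=\inf_{s}\{|h(s)|+|h'(s)|\}\ge c_2 t$ means that at every point either $|h|$ is already $\gtrsim t$ or $|h'|\gtrsim t$. First I would dispense with the trivial possibility that $|h(s)|>2\dz$ everywhere, in which case $E_{2\dz}=\emptyset$. Otherwise, pick $s_0\in E_{2\dz}$. The strategy is to show $E_{2\dz}$ is contained in a short interval around $s_0$. Since $\dz<c_1 t\le \tfrac{c_2}{4}t$, at $s_0$ we have $|h(s_0)|\le 2\dz< \tfrac{c_2}{2}t$, hence $|h'(s_0)|\ge \lz_1-|h(s_0)|\ge \tfrac{c_2}{2}t$; by continuity and $\|h''\|_\infty\le t$, $|h'|\ge \tfrac{c_2}{4}t$ on an interval $J$ of length $\sim c_2/4$ around $s_0$ — in fact on the maximal such interval $h'$ keeps a fixed sign, so $h$ is strictly monotone there with $|h'|\ge \tfrac{c_2}{4}t$, and thus $|\{s\in J:|h(s)|\le 2\dz\}|\le \tfrac{4\dz}{(c_2/4)t}\lesssim \dz/t$. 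It remains to argue $E_{2\dz}\subset J$: near an endpoint of $J$ we would have $|h'|=\tfrac{c_2}{4}t$, so there $|h|\ge \lz_1-\tfrac{c_2}{4}t\ge \tfrac{3c_2}{4}t> 2\dz$ (using $\dz<\tfrac{c_2}{4}t$ generously, or rather $4c_1\le c_2$ so $2\dz<\tfrac{c_2}{2}t$), and the sign/monotonicity of $h$ on $J$ forces $|h|$ to stay $>2\dz$ beyond $J$ on the relevant side. Hence $|E_{2\dz}|\lesssim \dz/t$. (One must be slightly careful with the constants, but the mechanism is: $|h|$ small $\Rightarrow$ $|h'|$ large $\Rightarrow$ $h$ monotone with definite slope $\Rightarrow$ short sublevel set.)

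For part (2): here $h''(s)\ge c_2 t$ on $(0,a)$ and $\lim_{s\to 0+}h'(s)=0$, so $h'$ extends continuously to $0$ with $h'(0)=0$, and by the mean value theorem $h'(s)\ge c_2 t\, s$ for $s\in[0,a]$; integrating, $h(s)\ge h(0)+\tfrac{c_2 t}{2}s^2$. Since $h$ is strictly convex with $h'(0)=0$, $h$ is strictly increasing on $[0,a]$, so $|h|$ is unimodal: $E_\dz=\{|h|\le\dz\}$ is a (possibly empty) closed interval, namely $\{h\ge -\dz\}\cap\{h\le \dz\}$, which is an intersection of two intervals of the form $[\text{something},a]$ or $[0,\text{something}]$ — hence a closed interval. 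To get $E_\dz\subset[0,d]$: if $h(s)\le \dz$ then from $h(s)\ge h(0)+\tfrac{c_2 t}{2}s^2\ge -\lz_2+\tfrac{c_2 t}{2}s^2$ (using $|h(0)|=\lz_2$) we obtain $s^2\le \tfrac{2(\lz_2+\dz)}{c_2 t}$, i.e. $s\le d:=\sqrt{\tfrac{2(\lz_2+\dz)}{c_2 t}}\le c_2^{-1}\sqrt{(\lz_2+\dz)/t}$ provided $c_2\le 1/2$ — if instead one only has $c_2<1$ the constant can be absorbed into the $\lesssim$, but since the statement asks for the explicit constant $c_2^{-1}$ I would just note $2/c_2\le c_2^{-2}\le$ whatever is needed, or more honestly verify the numerics with $c_2<1$ give $d\le c_2^{-1}\sqrt{(\lz_2+\dz)/t}$ after possibly shrinking; in any case this is routine. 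For the length bound: write $E_\dz=[p,q]$. If $E_\dz$ is nonempty then $|h(p)|\le\dz$ and $|h(q)|\le \dz$, so $|h(q)-h(p)|\le 2\dz$, while $h(q)-h(p)=\int_p^q h'\ge \int_p^q c_2 t\,s\,ds=\tfrac{c_2 t}{2}(q^2-p^2)=\tfrac{c_2 t}{2}(q-p)(q+p)$. Now $q+p\ge q\gtrsim$ (lower bound); more precisely at the left endpoint $p$ we have $h(p)=-\dz$ or $p=0$, and one checks $q\gtrsim \sqrt{(\lz_2+\dz)/t}$ from $h(q)\ge -\lz_2+\tfrac{c_2 t}{2}q^2\ge -\dz$ being the boundary case — actually I would argue $q+p\ge q\ge c\sqrt{(\lz_2+\dz)/t}$ whenever $\lz_2\gtrsim\dz$, and handle $\lz_2\lesssim\dz$ by the crude bound $q\le d\lesssim\sqrt{\dz/t}$ giving $|E_\dz|\le d\lesssim\sqrt{\dz/t}\sim \dz/\sqrt{(\lz_2+\dz)t}$. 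Combining, $q-p\le \tfrac{4\dz}{c_2 t(q+p)}\lesssim \dz/\sqrt{(\lz_2+\dz)t}$.

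\textbf{Main obstacle.} The conceptual content is easy — in both cases one is just saying "a sublevel set of a function with a definite-sign lower-order derivative is short." The only real work, and the part most likely to be fiddly, is tracking the explicit constants $c_1,c_2$ through the inequalities in part (2) so that the bound on $d$ comes out exactly with the stated constant $c_2^{-1}$ (rather than merely $\lesssim$), and handling the degenerate regime $\lz_2\lesssim\dz$ versus $\lz_2\gtrsim\dz$ uniformly in the length estimate. Since the paper says the proof is "essentially the same as \cite[Lemma 3.8]{2022arXiv220702259P}," I would follow that reference for the precise constant bookkeeping and otherwise present the calculus as above.
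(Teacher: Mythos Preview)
Your approach is essentially the same as the paper's. For part (2), the paper integrates $h''\ge c_2 t$ twice to get $|h(s)|\ge c_2 t s^2 - \lz_2$ (modulo a factor of $1/2$), deduces the bound on $d$, and then splits into the cases $\lz_2\le\dz$ and $\lz_2>\dz$ exactly as you do. One small correction: in the case $\lz_2>\dz$, the lower bound $s\gtrsim\sqrt{\lz_2/t}$ for $s\in E_{2\dz}$ comes from the \emph{upper} bound $|h''|\le t$ (yielding $|h(s)|\ge \lz_2 - ts^2$), not from the inequality $h(q)\ge -\lz_2+\tfrac{c_2 t}{2}q^2$ you wrote, which only gives an upper bound on $q$.

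For part (1), the paper's argument is actually shorter than yours: it simply observes that on $E_{2\dz}$ one has $|h'|\ge \lz_1-2\dz\ge (c_2/2)t$ (using $4c_1\le c_2$) and concludes $|E_{2\dz}|\lesssim\dz/t$ directly. Your attempt to show $E_{2\dz}\subset J$ for a single interval $J$ is not quite justified as stated --- nothing prevents $|h|$ from dipping below $2\dz$ again after $h'$ changes sign beyond $J$ --- but this is easily repaired (and the paper leaves the same step implicit): the bound $|h''|\le t$ forces successive components of $E_{2\dz}$ to be $\gtrsim c_2$ apart, so with $a\lesssim 1$ there are $O(1)$ of them, each of length $\lesssim\dz/t$.
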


\begin{proof}
  First, we show (1). Noticing that for $s \in E_{2\dz}$, we have
  $$  c_2t \le \lz_1 \le \min_{s \in E_{2\dz}}\{|h(s)| + |h'(s)|\} \le 2\dz + \min_{s \in E_{2\dz}}\{|h'(s)|\}. $$
  By $4c_1 \le c_2$, we know $\min_{s \in E_{2\dz}}\{|h'(s)|\} \gtrsim t$. Thus we get $|E_{2\dz}| \lesssim \dz/t$ as desired.

  Next, we show (2). Using  $h''(s) \ge c_2 t$, we deduce that for all $s \in [0,a]$, $h'(s)$ does not change sign and
  \begin{align*}
    |h(s)| & \ge |\int_{0}^s  h'(\tau) \, d \tau| -\lz_2 =  \int_{0}^s  |h'(\tau)| \, d \tau - \lz_2 \\
     & = \int_{0}^s  \int_{0}^\tau  h''(\rho) \, d \rho \, d \tau  - \lz_2 \ge c_2 t s^2 - \lz_2.
  \end{align*}
  Noticing that $s \in E_{2\dz}$ implies $|h(s)| \le 2\dz$, we know
  $\max E_{2\dz} \le  \sqrt{(\lz_2+\dz)/(c_2t)} \le c_2^{-1}\sqrt{(\lz_2+\dz)/t}$. 

  Finally, we estimate $|E_{2\dz}|$. In the case $\lz_2 \le \dz$, it holds
  $$ |E_{2\dz}| \le d \lesssim \sqrt{(\lz_2+\dz)/t} \sim \frac{\lz_2+\dz}{\sqrt{(\lz_2+\dz)/t}} \sim \frac{\dz}{\sqrt{(\lz_2+\dz)/t}}. $$
  In  the case $\lz_2 > \dz$, recalling $\lz_2 =|h(0)|$, for each $s \in E_{2\dz} \cap [0,a]$, we deduce that
  \begin{align*}
    2\dz & \ge |h(s)| \ge \lz_2-\int_{0}^s  |h'(\tau)| \, d \tau  \\
     & \ge \lz_2- \int_{0}^s  \int_{0}^\tau  |h''(\rho)| \, d \rho \, d \tau   \ge \lz_2 -  t s^2 ,
  \end{align*}
  which implies $s \gtrsim \sqrt{(\lz_2 + \dz)/t} \sim \sqrt{\lz_2 /t} $ for all $E_{2\dz} \cap [0,b]$. 
  Using $h''(s) \ge c_2 t$ for all $s \in (0,a)$, we have
  $$ |h'(s)|= |h'(s)- h'(0)| \gtrsim t|s| \ge \sqrt{\lz_2 /t}, \quad s \in E_{2\dz} .  $$
  As a result, we get $|E_{2\dz}| \lesssim \dz/\sqrt{\lz_2 /t} \sim \dz/\sqrt{(\lz_2 + \dz)/t}$ as desired.
\end{proof}

We end this section by recalling the following fact for convex sets (see for example \cite[Theorem 3.6 (b6)]{MR4180684}).

\begin{lemma}\label{convset}
  Let $A \subset B \subset \R^{n}$ be two compact convex sets with $\calL^n(A)>0$. Then
  $$  \calH^{n-1}(\pa A) \le \calH^{n-1}( \pa B). $$
\end{lemma}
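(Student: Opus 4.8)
The plan is to deduce the inequality from the fact that the nearest-point (metric) projection onto a compact convex set is $1$-Lipschitz, together with the observation that this projection maps $\pa B$ onto a set containing $\pa A$; then a Lipschitz map does not increase $\calH^{n-1}$-measure.

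First I would recall the metric projection $\pi_A \colon \R^{n} \to A$, which assigns to each $x$ the (unique) closest point of $A$; this is well defined since $A$ is compact and convex. From the variational characterization $\langle x - \pi_A(x), a - \pi_A(x)\rangle \le 0$ for all $a \in A$, applied to a pair $x,y$ and added (with $a = \pi_A(y)$ and $a = \pi_A(x)$ respectively), one gets
$$ |\pi_A(x)-\pi_A(y)|^{2} \le \langle x-y,\ \pi_A(x)-\pi_A(y)\rangle \le |x-y|\,|\pi_A(x)-\pi_A(y)|, $$
so $\pi_A$ is $1$-Lipschitz.

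The key step is to show $\pa A \subseteq \pi_A(\pa B)$. Fix $x \in \pa A$ and pick a supporting hyperplane of $A$ at $x$ with outer unit normal $u$, so that $\langle a - x, u\rangle \le 0$ for all $a \in A$. If $x \in \pa B$, then $\pi_A(x) = x$ and we are done. Otherwise $x$ lies in the interior of $B$ (as $A \subseteq B$), and since $B$ is bounded the ray $\{x + tu : t \ge 0\}$ meets $\pa B$ at $y := x + Tu$, where $T := \max\{t \ge 0 : x+tu \in B\} \in (0,\infty)$. For every $a \in A$,
$$ |y-a|^{2} = |x-a|^{2} + 2T\langle x-a, u\rangle + T^{2} \ge |x-a|^{2} + T^{2} \ge T^{2} = |y-x|^{2}, $$
so $x$ is the point of $A$ closest to $y$, i.e. $\pi_A(y) = x$. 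Hence every point of $\pa A$ lies in $\pi_A(\pa B)$.

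Finally I would combine the two facts: since $\pi_A$ is $1$-Lipschitz it satisfies $\calH^{n-1}(\pi_A(S)) \le \calH^{n-1}(S)$ for any $S$, and by monotonicity of the outer measure $\calH^{n-1}$,
$$ \calH^{n-1}(\pa A) \le \calH^{n-1}(\pi_A(\pa B)) \le \calH^{n-1}(\pa B). $$
The hypothesis $\calL^{n}(A) > 0$ is not strictly needed for this argument, but it guarantees that $A$ has nonempty interior, so that $\pa A$ is the boundary of a genuine convex body, as in the situation where the lemma is used. There is no serious obstacle here; the only mildly delicate point is the supporting-hyperplane computation establishing $\pi_A(y) = x$, which requires the case split according to whether $x \in \pa B$ or $x$ lies in the interior of $B$.
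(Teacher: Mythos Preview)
Your argument is correct: the metric projection $\pi_A$ is $1$-Lipschitz, the supporting-hyperplane trick shows $\pa A \subset \pi_A(\pa B)$, and the conclusion follows from the behaviour of Hausdorff measure under Lipschitz images. The computation verifying $\pi_A(y)=x$ is clean and the case split is handled properly.

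As for comparison: the paper does not actually prove this lemma at all --- it records it as a known fact about convex sets and cites \cite[Theorem 3.6 (b6)]{MR4180684}. So your proposal supplies a self-contained elementary proof where the paper is content to quote the literature. Your route via the nearest-point retraction is in fact the standard one behind the cited result, so there is no genuine methodological divergence; you have simply unpacked what the reference contains. Your closing remark that $\calL^n(A)>0$ is not essential is also accurate: the inequality holds for arbitrary compact convex $A \subset B$, and the hypothesis is there only because the application in Section~3 concerns convex bodies with nonempty interior.
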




\section{Intersections of functions in a cinematic family}
We first show Lemma \ref{dznbhd1} in this section. 
Recall that for arbitrary $\eta>0$ and
subdomain $D \subset I^{n-1}$, the vertical $\eta$-neighborhood is
   $$f^{\eta}(D):= \left\{(x,y)=(x_1,\cdots,x_{n-1},y) \in D\times\mathbb{R} : f(x)- \eta \le y \le f(x)+\eta\right \}.$$




%
%
%
%
%
%
%

\begin{proof} [Proof of Lemma \ref{dznbhd1}]
Write $t:=\|f-g\|_{C^2([0,1]^{n-1})}$.
Similar to the previous section, we may assume $F \subset C^{2}(I_F^{n-1})$ be a cinematic family with cinematic constant $K$ and modulus of continuity $\az$.
By
decomposing $[0,1]^{n-1}$ by dyadic cubes $\{U\}$ with $\frac{\az(K^{-1}/6)}4 < \diam U < \frac{\az(K^{-1}/6)}2$, to prove \eqref{meas}, it suffices to show
$$ \calL^{n}(f^\delta(U) \cap g^\delta(U)) \lesssim \frac{\dz^2}{t}   $$
for each $U \subset[0,1]^{n-1}$.
Notice that for each $U \subset [0,1]^{n-1}$, $f,g$ are well-defined on $2U$.
In the following, we fix $U$ and write $f^\delta \cap g^\delta = f^\delta(U) \cap g^\delta(U)$ for simplicity.

  If $f^\delta \cap g^\delta = \emptyset$, then there is nothing to prove. Also, since $t \in [\delta,1]$, if $\delta \le t \le [\frac{60K}{\az(K^{-1}/6)}]^2\delta$,
   then $\calL^{n}(f^\delta \cap g^\delta) \le \calL^{n}(f^\delta) \sim \delta \sim \frac{\delta^2}{t}$.
Thus in the following, we assume
\begin{equation}\label{ass1}
  t > 4[\frac{10^5K^4}{\az(K^{-1}/6)^2}]^2\delta.
\end{equation}

  Assume $f^\delta \cap g^\delta \ne \emptyset$.
  Let $P_{f,g}$ be the orthogonal projection of
  $f^\delta \cap g^\delta$ to $U$.
Since the vertical thickness of $f^\delta \cap g^\delta$ is no more than $2\delta$, we need to show
  \begin{equation}\label{Pfg}
   \calL^{n-1} (P_{f,g}) \lesssim \frac{\delta}{t}.
  \end{equation}

  Write $h=f-g$. By $f^\delta \cap g^\delta \ne \emptyset$, we know there exists $x \in U$ such that $|h|\le 2\delta$ which implies $|h|$ does not enjoy Corollary \ref{uniq} (A)(i).
  Hence either Corollary \ref{uniq} (A)(ii) (the transversal case) or Corollary \ref{uniq} (A)(iii) (the tangent case) holds.
  We consider these two cases separately.

  \medskip
  \emph{Case 1.} Corollary \ref{uniq} (A)(ii) holds, that is
  $$  |\nabla h(x)| \ge \frac{1}{3K} \|h\|_{C^2(I_F^{n-1})} \quad  x \in 2U.  $$
In particular, $\nabla h \ne 0$ in $U$.

We will prove \eqref{Pfg} by showing that $P_{f,g}$ is the image of a set $\Omega$ under a Lipschitz map $\Psi$ with Lipschitz constant $\lesssim 1$ and $\calL^{n-1}(\Omega) \lesssim \delta/t$. To this end, we divide the proof into two steps. As mentioned in the introduction, in \emph{Step 1}, we foliate $P_{f,g}$ by a family of curves $\gz_x$ using Lemma \ref{tran3}. And we define a family of auxiliary functions $h_x$ derived from $h$ and $\gz_x$ which satisfies Lemma \ref{pyz} (1). In \emph{Step 2}, we apply Lemma \ref{pyz1} (1) to conclude \eqref{Pfg}.

\medskip \emph{Step 1}.
  Consider the ODE
  \begin{equation}\label{gflow}
    \dot \gamma(s) = \frac{\nabla h(\gamma)}{|\nabla h(\gamma)|} \quad  \mbox{in } U.
  \end{equation}
  Since $\nabla h \in C^1(U,\R^{n-1})$ and $\nabla h \ne 0$ in $U$, we know $\frac{\nabla h}{|\nabla h|} \in C^1(U,\R^{n-1})$. By Lemma \ref{tran3}, we know, for any $x \in U$, there exists a unique
   the maximal solution $\gamma_x \in C^2( [a,b] ,U)$ to \eqref{gflow} with $\gamma_x(0)=x$, and $ \gamma_x(a),\gamma_x(b) \in \pa U$.
   Denote
   $$\pa U^+:= \{ x \in \pa U : \mbox{there exists a solution $\gamma_x \in C^2( [0,b_x] ,U)$ to \eqref{gflow} starting from $x$} \}.$$
Then, again by Lemma \ref{tran3}, we know $\{\gamma_x\}_{\xi \in \pa U^+}$ gives a foliation of $U$.

  For each $x  \in \pa U^+$, define $h_x:= h\circ \gamma_x$. Then $h_x \in C^2([0,b_x],U)$. Our goal is to show $|\{ |h_x| \le 2\delta\}|  \sim \delta/ t$.
  To this end, we check that $h_x$ satisfies Lemma \ref{pyz1} (1).
  Define
  $$ \lz= \Delta(f,g) :=  \inf_{x \in U} \{ |h(x)| + | \nabla h(x)|\},  $$
  and
  $$  \lambda_x:= \inf_{s \in [0,b_x]} \{ |h_x(s)| + | h_x'(s)|\} \text{ and } t_x:=\|h_x\|_{C^2([0,b_x])}.  $$
  We show
  \begin{equation}\label{tlz}
    3K^{-1} t \le \lambda \le  \lambda_x \le t_x \le t  \quad  x \in \pa U^+.
  \end{equation}
  In particular,
   \begin{equation}\label{lzxtx}
     \lambda_x \ge \frac{1}{3K}t_x \quad x \in \pa U^+.
   \end{equation}
   To see \eqref{tlz}, we compute
   $$h_x'(s)= \nabla h(\gamma_{x}(s)) \cdot \dot \gamma_{x}(s) = \nabla h(\gamma_{x}(s))\cdot\frac{\nabla h(\gamma_{x}(s))}{|\nabla h(\gamma_{x}(s))|} = |\nabla h(\gamma_{x}(s))|.$$
   This implies
   $\lz_x \ge  \lz= \Delta(f,g)$. Also, it is immediate that $t_x \le t$. On the other hand, the fact that $|\nabla h|$ satisfies Corollary \ref{uniq} (A)(ii) implies
   $$\lambda \ge 3K^{-1} \|h\|_{C^2(I_F^{n-1})} \ge 3K^{-1} \|h\|_{C^2([0,1]^{n-1})} = 3K^{-1} t.$$
    Thus \eqref{tlz} and \eqref{lzxtx} hold.

%



   \medskip
   \emph{Step 2.} First, notice that \eqref{tlz} also gives $t_x \ge (3K)^{-1}t$. Combining \eqref{ass1}, we have $t_x \ge \frac{10^9K^7}{\az(K^{-1}/6)^4}\delta$ for all $x \in \pa U^+$.
   Applying Lemma \ref{pyz1} (1) to each $h_x$ with $c_1=\frac{\az(K^{-1}/6)^4}{10^9K^7} \le \frac{1}{12K}$ and $c_2=3K^{-1} \ge 4c_1$, we know
   \begin{equation}\label{1dim}
     |I_x:=\{s \in [0,b_x]: |h_x(s)| \le 2\delta\}| \lesssim \delta/\sqrt{\lambda_x t_x} \sim \delta/ t.
   \end{equation}
In the following, by Remark \ref{tran2}, we can estimate $\calL^{n-1}(P_{f,g})$ by estimating $\calL^{n-1}(P_{f,g} \cap U_i)$ for each $i=1,\cdots 2^{n-1}$. Thus we may without loss of generality assume that $P_{f,g} \subset U_i \subset$ span $\{x_1,\cdots,x_{n-2}\}$ for some $i$ and thus $\pa U^+ \subset \pa U_i$.
For any $x=(x_1,\cdots,x_{n-1}) \in \pa U^+ \subset 2U \subset \R^{n-1}$, write $x=(\bar x,x_{n-1})$ as in Remark \ref{tran2}.
   Let $\Omega:= \cup_{x \in \pa U^+} \{(\bar x, I_x)\} \subset \R^{n-1}$ and define the map $\Psi : \Omega \to U$ by
  $$ \Psi(\bar x,s) = \gz_x(s).   $$
  Since $2U \subset I_F^{n-1}$, the ordinary equation
  \begin{equation}\label{gflow5}
    \dot \gamma(s) = \frac{\nabla h(\gamma)}{|\nabla h(\gamma)|} \quad  \mbox{in } 2U
  \end{equation}
  is well-defined. Let $\Phi(x,s)$ be the flow of \eqref{gflow5} with initial condition $\Phi(x,0)=x$ for all $x \in 2U$. Then we know for all
  \begin{equation}\label{phi}
   \Psi(\bar x,s)=\Phi(x,s) \quad  (\bar x,s) \in \Omega.
  \end{equation}
  Note that
   $\frac{\nabla h}{|\nabla h|} \in C^1(2U,\R^{n-1})$ and
   $$ \nabla(\frac{\nabla h}{|\nabla h|})(x) = \frac{\nabla^2 h}{|\nabla h|}(x) -  \frac{(\nabla^2 h \cdot \nabla h) \otimes \nabla h}{|\nabla h|^3}(x)  \quad \forall x \in 2U . $$
   Recall the standing assumption that $|\nabla h(x)| \ge \frac{1}{3K} \|h\|_{C^2(I_F^{n-1})} \ge \frac{1}{3K} \|h\|_{C^2(2U)}$, we know
   \begin{equation}\label{hc1}
     \|\nabla(\frac{\nabla h}{|\nabla h|})\|_{L^\fz(2U)} \le 6K\frac{\|\nabla^2 h\|_{L^\fz(2U)} }{\|h\|_{C^2(2U)}} \lesssim 1 .
   \end{equation}
   Recalling \eqref{flowlip}, one has
   $$|\nabla_x \Phi(x,s)| = |(\pa_{x_1}\Phi(x,s), \cdots, \pa_{x_{n-1}}\Phi(x,s))| \lesssim e^{\|\nabla(\frac{\nabla h}{|\nabla h|})\|_{L^\fz(2U)}} \lesssim 1 \quad  (\bar x,s)\in \Omega. $$
    Then by \eqref{phi}, for $(\bar x,s)\in \Omega$, write $\nabla_{\bar x} \Psi(\bar x,s)=(\pa_{x_1}\Psi(\bar x,s), \cdots, \pa_{x_{n-2}}\Psi(\bar x,s))$. We have
   $$ |\nabla_{\bar x} \Psi(\bar x,s)| = |(\pa_{x_1}\Psi(\bar x,s), \cdots, \pa_{x_{n-2}}\Psi(\bar x,s))|= |(\pa_{x_1}\Phi(x,s), \cdots, \pa_{x_{n-2}}\Phi(x,s))|  \lesssim 1 . $$

   On the other hand, we have $|\pa_s \Psi(\bar x,s)| = |\dot \gz_x(s)| =1 $ for all $(x,s) \in \Omega$.  Thus, writing $\nabla\Psi=(\nabla_{\bar x}\Psi,\pa_s\Psi )$, we have
   $$\|\mbox{Det } \nabla\Psi\|_{L^\fz(\Omega)} = \|\mbox{Det } (\nabla_{\bar x}\Psi,\pa_s\Psi )\|_{L^\fz(\Omega)}  \lesssim 1.$$
   Combining \eqref{1dim}, we conclude,
  \begin{align*}
     \calL^{n-1}(\{y \in U: & |h(y)| \le 2\delta\})
     = \int_U \chi_{\{|h(y)| \le 2\delta\}}(y) \, dy \\
     & \le  \int_{x \in \pa U^+} \int_{I_x}\chi_{\{  |h(\gz_x(s))| \le 2\dz   \}} (s)\mbox{ $|$Det } \nabla \Psi(\bar x,s)|\, ds \, d\calH^{n-2}(x) \\
      & \lesssim  \int_{x \in \pa U^+} |I_x| \, d\calH^{n-2}(x) \\
      &\lesssim \calH^{n-2}(\pa U^+) \delta/ t \lesssim \delta/ t.
   \end{align*}
  Since
  $  P_{f,g}=  \{ |h| \le 2\delta\}, $
  we arrive at \eqref{Pfg}.

  \medskip

  \emph{Case 2.} Corollary \ref{uniq} (A)(iii) holds, that is,
  \begin{equation}\label{standing2}
    |\nabla_\xi \nabla_\xi h (x)| \ge \frac{1}{3K} \bar t := \frac{1}{3K} \|h\|_{C^2(I_F^{n-1}) } \qquad \mbox{for all } x \in  2U, \xi \in S^{n-2}.
  \end{equation}
  Recall
  $$ \Delta=\Delta(f,g)= \inf_{x \in  U}\{ |f(x)-g(x)| + |\nabla f(x)- \nabla g(x)| \} = \inf_{x \in  U}\{ |h(x)| + |\nabla h(x)| \}.$$

  \medskip
  \emph{Subcase 2-1.} Assume $\Delta(f,g) \ge \frac{\az(K^{-1}/6)^2}{10^5K^4}\bar t= \frac{\az(K^{-1}/6)^2}{10^5K^4} \|f-g\|_{C^2(I^{n-1}_F)}$.

Applying the same argument in \emph{Case 1, Step 1},
instead of \eqref{tlz} and \eqref{lzxtx}, we have
  \begin{equation*}
    \frac{\az(K^{-1}/6)^2}{10^5K^4}\bar t  \le \lambda \le  \lambda_x \le t_x \le \bar t  \quad  x \in \pa U^+.
  \end{equation*}
  In particular,
   \begin{equation*}
     \lambda_x \ge \frac{\az(K^{-1}/6)^2}{10^5K^4} t_x \mbox{ and } t_x \ge \frac{\az(K^{-1}/6)^2}{10^5K^4}\bar t \quad x \in \pa U^+.
   \end{equation*}
  Combining \eqref{ass1}, we have $t_x \ge 4 \frac{10^5K^4}{\az(K^{-1}/6)^2}\delta$ for all $x \in \pa U^+$.
  We apply Lemma \ref{pyz1} (1) to each $h_x$ with $c_1=\frac14 \frac{\az(K^{-1}/6)^2}{10^5K^4} $ and $c_2=\frac{\az(K^{-1}/6)^2}{10^5K^4} \ge 4c_1$. Then use the same argument as in \emph{Case 1, Step 2},
we have $ \calL^{n-1}(P_{f,g})
      \lesssim \delta/ \bar t$ and noting that $t \le \bar t$, we  arrive at \eqref{Pfg}.

\medskip
  \emph{Subcase 2-2.} Assume ${\Delta}(f,g) <  \frac{\az(K^{-1}/6)^2}{10^5K^4}\|h\|_{C^2(I^{n-1}_F)} = \frac{\az(K^{-1}/6)^2}{10^5K^4} \bar t$.

By Corollary \ref{uniq} (B), we may assume $h$ is strictly convex in $2U$ (otherwise we consider $h=g-f$).
  In this subcase, we will deeply use the convexity of $h$ and divide the proof into three steps. In \emph{Step 1}, we first find the unique point $x_M \in 2U$ such that $\nabla(x_M)=0$ by constructing an auxiliary family of functions $\{h_\xi\}$. In \emph{Step 2}, we define another family $\{\bar h_\xi\}$ and show that $\{\bar h_\xi\}$ satisfies Lemma \ref{pyz1} (2). Then, in \emph{Step 3}, we apply Lemma \ref{pyz1} (2) to deduce \eqref{Pfg}.

\medskip
\emph{Step 1}.
  Let $x_{\Delta} \in U$ be the point such that ${\Delta}=|h(x_{\Delta})| + |\nabla h(x_{\Delta})|$.
  Then $|h(x_{\Delta})| \le {\Delta}$ and $|\nabla h(x_{\Delta})| \le {\Delta}$. We will show that there exists a point $x_M$ such that $\nabla h(x_M)=0$ and
  \begin{equation}\label{xm}
    x_M \in \frac65 U \mbox { and } |h(x_M)| \le 10K^2\Delta.
  \end{equation}

  First, if $\nabla h(x_{\Delta})=0$, since Corollary \ref{uniq} (A)(iii) holds for $h$, by Corollary \ref{uniq} (B), we know $x_M=x_{\Delta}$. Thus apparently, \eqref{xm} holds.

  Next, assume $\nabla h(x_{\Delta})  \ne 0$.  For each $\xi \in S^{n-2}$, let $$E_\xi=\{s \in \R : x_{\Delta} + s\xi \in 2U \}$$
  and
  \begin{equation}\label{defh}
    h_\xi(s):= h(x_{\Delta} + s\xi) \quad s \in E_\xi.
  \end{equation}
  Thus by the strict convexity of $h$, we know
  $h_\xi: E_\xi \to \R$ is a $C^2$ strictly convex function for each $\xi \in S^{n-2}$. Note that
  \begin{equation}\label{hxi0}
    h_\xi'(s)= \nabla_\xi h (x_{\Delta} + s\xi) \mbox{  and } h_\xi''(s)= \nabla_\xi \nabla_\xi h (x_{\Delta} + s\xi).
  \end{equation}
In particular, $|h_\xi'(0)| \le |\nabla h (x_{\Delta})| \le \Delta$.

  Since $|\nabla_\xi\nabla_\xi  h(x)| \ge \frac{1}{3K} \bar t$ for all $x \in 2U$ and $\xi \in S^{n-2}$, for each $\xi$, there exists $s_\xi \in E_\xi$ with $|s_\xi| \le 3K{\Delta}/ \bar t < \frac{\az(K^{-1}/6)}{20} < \frac{\diam U}5$ such that
  $ h_\xi'(s_\xi)=0$ where we recall $\diam U > \frac{\az(K^{-1}/6)}{4}$. Moreover, we have
  $$  x_{\Delta}+ s_\xi\xi \in \frac{6}5 U \subset 2U^\circ \quad \xi \in S^{n-2}.$$
  In particular, $s_\xi \notin \pa E_\xi$ and $x_{\Delta}+ s_\xi\xi \notin \pa(2U)$.
  In addition, by the strict convexity of $h_\xi$, we know $s_\xi$ is the unique point in $E_\xi$ which reaches the minimum of $h_\xi$.
  We show that there exists $\xi_0 \in S^{n-2}$ such that
  \begin{equation}\label{mins}
     h(x_{\Delta}+ s_{\xi_0}\xi_0) = \min_{\xi \in S^{n-2}} \{h(x_{\Delta}+ s_{\xi}\xi) \}.
  \end{equation}
  To see \eqref{mins}, it suffices to show that $s_\xi$ continuously depends on $\xi$. Indeed, consider the $C^1$ function, $\phi(\xi,s) =\nabla h(x_{\Delta}+ s\xi) \cdot \xi$.
  Then $\phi(\xi,s_\xi)=0$ for all $\xi \in S^{n-2}$ and $\frac{\pa\phi}{\pa s}(\xi,s)= \nabla_\xi\nabla_\xi(x_{\Delta}+ s_\xi\xi) \ne 0$ at all $(\xi,s_\xi)$. Applying the implicit function theorem for $\phi$, we know $s_\xi$ is a continuous function of $\xi$ and \eqref{mins} holds.

  Define
  $$U_\ez:= \bigcup_{|\xi-\xi_0|\le \ez } \{x_{\Delta}+ s\xi : |s-s_\xi| \le \ez \}.$$
  Choose an $\ez$ small enough such that $U_\ez \subset 2U^\circ$, we know
  for each point $x \in U_\ez$, $x= x_{\Delta}+ s_x\xi_x$ for some $\xi_x$ and $s_x$. Thus
  $$  h(x) = h(x_{\Delta}+ s_x\xi_x) = h_{\xi_x}(s_x) \ge h_{\xi_x}(s_{\xi_x}) =  h(x_{\Delta}+ s_{\xi_x}\xi_x) \ge h(x_{\Delta}+ s_{\xi_0}\xi_0).  $$
  As a consequence, we know $x_{\Delta}+ s_{\xi_0}\xi_0$ reaches the local minimum of $h$ in $U_\ez$. Recalling that $h \in C^2(I_F^{n-1})$, we know $\nabla h(x_{\Delta}+ s_{\xi_0}\xi_0) =0$.


  Let $x_M= x_{\Delta}+ s_{\xi_0}\xi_0$.
  We have
  $\nabla h(x_M)=0$ and $x_M \in \frac65 U$ as desired.
By Corollary \ref{uniq} (B), we know $x_M$ is the unique point such that $\nabla h =0$. 
We also have
$$  |h(x_M)- h(x_{\Delta})| \le \max_{s \in [0,1]}\{|\nabla_e\nabla_e  h((1-s)x_M + s x_{\Delta} )|\}|x_M -x_{\Delta}|^2 \le \bar t(3K{\Delta}/\bar t)^2 \le 9K^2\Delta  $$
where $e = \frac{x_M -x_{\Delta}}{|x_M -x_{\Delta}|}$. Recalling that $|h(x_{\Delta})|\le \Delta$, we have $|h(x_M)| \le 10K^2\Delta$.
Thus \eqref{xm} holds.



\medskip
\emph{Step 2}. We define the family $\{\bar h_\xi\}_{\xi \in S^{n-2}}$ and check $\{\bar h_\xi\}_{\xi \in S^{n-2}}$ satisfies Lemma \ref{pyz1} (2).

Let
$$   I_\xi: = \{s \ge 0 : x_M + s\xi \in 2U\}$$
and
$$  \bar h_\xi (s): = h(x_M +s \xi),  \quad s \in I_\xi, \ \xi \in S^{n-2}.   $$
Since $x_M \in \frac65 U$, we know $\min I_\xi =0$ for all $\xi \in S^{n-2}$. Thus $\bar h_\xi(0)$ is well-defined and $\bar h_\xi(0)=h(x_M)$. Similar to \eqref{hxi0}, we compute
\begin{equation}\label{hxi2}
   \bar h_\xi'(s)= \nabla_\xi h (x_M + s\xi) \mbox{  and } h_\xi''(s)= \nabla_\xi \nabla_\xi h (x_M + s\xi) \quad s \in I_\xi^\circ.
  \end{equation}
Since $\nabla h(x_M)=0$, by \eqref{hxi2} and the standing assumption \eqref{standing2} of   \emph{Case 2}, we have
$$ \lim_{s \to 0+} \bar h_\xi'(s)=\lim_{s \to 0+}\nabla h(x_M+ s \xi) \cdot \xi = \nabla h(x_M) \cdot \xi =0 $$
and
$$\bar h_\xi''(s)= \nabla_\xi\nabla_\xi \bar h(x_M+s\xi) \ge \frac{1}{3K} \bar t \quad s \in I_\xi^\circ. $$
Moreover, let $t_\xi:= \|\bar h_\xi\|_{C^2(I_\xi)}$ for all $\xi \in S^{n-2}$. Then,
it is easy to see $t_\xi=\|\bar h_\xi\|_{C^2(I_\xi)} \le \|h\|_{C^2(I_F^{n-1})} =\bar t$. Thus
\begin{equation}\label{txi2}
  \bar h_\xi''(s) \ge \frac{1}{3K} \|\bar h_\xi\|_{C^2(I_\xi)} =\frac{1}{3K} t_\xi \quad s \in I_\xi^\circ.
\end{equation}
As a consequence, we know $h_\xi$ satisfies the condition of Lemma \ref{pyz1} (2) with constant $c_2=\frac{1}{3K}$ for all $x \in S^{n-2}$. We end this step by letting
$$  \bar \lz:=h(x_M) \equiv \bar h_\xi(0). $$
Recalling \eqref{xm},
we have
\begin{equation}\label{barlz}
   \bar \lz \le 10K^2\Delta.
\end{equation}

\medskip
\emph{Step 3}. Notice that \eqref{txi2} gives
$$ \bar t \ge t_\xi =  \|\bar h_\xi\|_{C^2(I_\xi)} \ge \|\bar h_\xi''\|_{L^\fz(I_\xi)} \ge \frac{1}{3K} \bar t \quad \xi \in S^{n-2}.$$
Thus $t_\xi \sim \bar t$.
 Combining \eqref{ass1}, we have $t_\xi \ge 4\frac{10^9K^7}{\az(K^{-1}/6)^4}\delta$ for all $\xi \in S^{n-2}$.
   Applying Lemma \ref{pyz1} (2) to each $\bar h_\xi$ with $c_1=\frac14\frac{\az(K^{-1}/6)^4}{10^9K^7}$ and $c_2=(3K)^{-1}$, we know
\begin{equation}\label{2dim}
     |\hat I_\xi:=\{s \in I_\xi: |\bar h_\xi(s)| \le 2\delta\}| \lesssim \delta/\sqrt{\bar \lz t_\xi} \sim  \delta/\sqrt{\bar \lz \bar t}
   \end{equation}
   and
   \begin{equation}\label{maxi}
     \max \hat I_\xi \le c_2^{-1}\sqrt{\frac{\bar \lz +\dz}{ \bar t}} = 3K\sqrt{\frac{\bar \lz +\dz}{ \bar t}} \quad \xi \in S^{n-2}.
   \end{equation}

Noting that
$$  2U = \bigcup_{\xi \in S^{n-2}}\{x_M + \xi I_\xi\},$$
we know
$$ \{ y \in 2 U : |h(y)| \le 2\dz \} = \bigcup_{\xi \in S^{n-2}}\{x_M + \xi s  : |\bar h_\xi(s)| \le 2\dz\} = \bigcup_{\xi \in S^{n-2}}\{x_M + \xi \hat I_\xi \}. $$
As a consequence, combining \eqref{maxi}
we have
  \begin{equation}\label{inc1}
    P_{f,g}= \{ y \in  U : |h(y)| \le 2\dz \}  \subset \bigcup_{\xi \in S^{n-2}}\{x_M + \xi \hat I_\xi \} \subset B(x_M, 3K\sqrt{\frac{\bar \lz +\dz}{ \bar t}} ).
  \end{equation}

   Since the parameters $(\xi,s)$ forms a natural polar coordinate centered at $x_M$,
    by considering $\bar\lambda \subset [0,\dz]$ and $\bar\lambda >\dz$ separately,
   in the case $\bar \lz > \dz$,
  noting that
  $$ \max \hat I_\xi \le 3K \sqrt{(\bar\lambda +\dz) / \bar t} \le 6K\sqrt{\bar\lambda / \bar t} \mbox{  and } |\hat I_\xi| \sim \dz/\sqrt{(\bar\lambda +\dz)\bar t} \sim \dz /\sqrt{\bar\lambda \bar t},$$ we deduce
  \begin{align*}
     \calL^{n-1}(P_{f,g})
     &= \int_U \chi_{\{y \in U: |h(y)| \le 2\delta\}}(y) \, dy \\
     & \le  \int_{\xi \in S^{n-2}} \int_{0}^{6K\sqrt{\bar\lambda / \bar t}}\chi_{\{ |\bar h_\xi(s)| \le 2\dz  \}} (s)s^{n-2}\, ds \, d\calH^{n-2}(\xi) \\
     & \lesssim  \int_{\xi \in S^{n-2}} \int_{\hat I_\xi} (\sqrt{\bar\lambda / \bar t})^{n-2}\, ds \, d\calH^{n-2}(\xi) \\
      & \lesssim  \calH^{n-2}(S^{n-2}) |\hat I_\xi|  (\sqrt{\bar\lambda / \bar t})^{n-2}  \\
      &\lesssim \frac{\dz}{\sqrt{\bar\lambda \bar t}} (\sqrt{\bar\lambda / \bar t})^{n-2} =  \frac{\dz\sqrt{\bar\lambda}^{n-3}}{\sqrt{\bar t}^{n-1}} \lesssim \frac{\dz}{\bar t}
   \end{align*}
   where in the last inequality we recall $\bar\lambda = h(x_M) \le \|h\|_{C^2(2U)} \le \bar t$ and $n\ge 3$.

  And in the case $\bar \lz \in [0,\dz]$, noting that
  $$ \max \hat I_\xi \le 3K\sqrt{(\bar\lambda +\dz) / \bar t} \le 6K\sqrt{\dz / \bar t} ,$$
  we deduce that
  \begin{equation}\label{omega}
     P_{f,g} \subset B(x_M, 6K\sqrt{\dz / \bar t}),
   \end{equation}
  which implies
  \begin{align*}
    \calL^{n-1} (P_{f,g}) & \le   \calL^{n-1} [B(x_M, 6K\sqrt{\dz / \bar t} )]
     \lesssim (\delta/ \bar t )^{(n-1)/2} \le \delta/ \bar t .
  \end{align*}

Finally, recalling that
  $\bar t  =\|h\|_{C^2(I_F^{n-1})}  \ge \|h\|_{C^2([0,1]^{n-1})}  \ge t$,
  we arrive at \eqref{Pfg}. The proof is complete.
\end{proof}

The following lemma will be used in Section 4.
\begin{lemma} \label{shape}
  Under the same assumption as in Lemma \ref{dznbhd1}, let $\ez>0$, $0<s \le 1$ and $E \subset [0,1]^{n-1}$ be a $(\dz,n-2+s,\dz^{-\ez})$-set with $|E| \le \dz^{-(n-2+s)}$. Denote by $P_{f,g}$ the orthogonal projection of
  $f^\delta \cap g^\delta$ to $[0,1]^{n-1}$. Then
  \begin{equation}\label{prfg}
   |E \cap P_{f,g}|_\dz \lesssim \delta^{-\epsilon} \frac{\dz^{-(n-2)}}{\|f-g\|_{C^2([0,1]^{n-1})}^s}.
  \end{equation}
\end{lemma}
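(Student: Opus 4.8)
The plan is to revisit the proof of Lemma~\ref{dznbhd1} and, in each of its cases, to upgrade the volume bound $\calL^{n-1}(P_{f,g})\lesssim\delta/t$ into an \emph{efficient covering} of $P_{f,g}$ by balls, after which the $(\delta,n-2+s,\delta^{-\epsilon})$-set property of $E$ is applied one ball at a time. Keeping the reductions of Lemma~\ref{dznbhd1} (extend $F$ to $I_F^{n-1}$, put $h=f-g$, $\bar t:=\|h\|_{C^2(I_F^{n-1})}\ge t$, decompose into $O_K(1)$ dyadic cubes $U$ of diameter $\sim\az(K^{-1}/6)$, and fix one), and noting that it suffices to prove the estimate with $\bar t$ in place of $t$ (since $\bar t\ge t$, $s>0$), I would first record the elementary remark: if a finite family $\calB$ of balls of radii $\ge\delta$ covers $P_{f,g}$, then
\[
|E\cap P_{f,g}|_\delta\le\sum_{B\in\calB}|E\cap B|_\delta\le\delta^{-\epsilon}\sum_{B\in\calB}\big(\operatorname{rad}B/\delta\big)^{n-2+s}.
\]
So it is enough, in each case, to produce a cover $\calB$ with $\sum_{B\in\calB}(\operatorname{rad}B)^{n-2}(\operatorname{rad}B/\delta)^{s}\lesssim\bar t^{-s}$; plugging in, this means a cover by $\lesssim(\bar t/\delta)^{n-2}$ balls of radius $\sim\delta/\bar t$ in the transversal case, and a scaled analogue in the tangent case. (The situations $P_{f,g}=\emptyset$, $\bar t\lesssim_K\delta$, or $\bar t>1$ are disposed of as in Lemma~\ref{dznbhd1}, the last using radius‑$\delta$ balls and absorbing a $K$‑dependent constant.)

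In the transversal case (Case~1 and Subcase~2-1 of Lemma~\ref{dznbhd1}) one has $|\nabla h|\gtrsim\bar t$ on $\{|h|\le2\delta\}\supset P_{f,g}$ while $\|h\|_{C^2}\le\bar t$; re-running the foliation argument of Lemma~\ref{dznbhd1} with an arbitrary half-thickness $\tau$ in place of $2\delta$ (via Lemma~\ref{pyz1}(1)) gives $\calL^{n-1}(\{|h|\le\tau\}\cap2U)\lesssim\tau/\bar t$ for all $0<\tau\lesssim\bar t$. Since $P_{f,g}\subset\{|h|\le2\delta\}$ and moving a distance $\delta/\bar t$ changes $|h|$ by at most $(\delta/\bar t)\|\nabla h\|_\infty\le\delta$, the $(\delta/\bar t)$-neighbourhood of $P_{f,g}$ lies in $\{|h|\le3\delta\}\cap2U$, which has Lebesgue measure $\lesssim\delta/\bar t$; hence $|P_{f,g}|_{\delta/\bar t}\lesssim(\delta/\bar t)/(\delta/\bar t)^{n-1}=(\bar t/\delta)^{n-2}$, i.e. $P_{f,g}$ is covered by $\lesssim(\bar t/\delta)^{n-2}$ balls of radius $\delta/\bar t\ge\delta$. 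For this $\calB$,
\[
\sum_{B\in\calB}(\operatorname{rad}B)^{n-2}(\operatorname{rad}B/\delta)^{s}\lesssim(\bar t/\delta)^{n-2}\,(\delta/\bar t)^{n-2}\,(1/\bar t)^{s}=\bar t^{-s},
\]
as required.

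In the tangent case (Subcase~2-2) $h$ is, say, strictly convex on $2U$ with unique critical point $x_M\in\tfrac{6}{5}U$, $\nabla_\xi\nabla_\xi h\sim\bar t$, and $|\bar\lambda|:=|h(x_M)|\lesssim\bar t$. From the polar decomposition used there, for every $\xi\in S^{n-2}$ the slice $\{s\ge0:|h(x_M+s\xi)|\le2\delta\}$ is an interval contained in $[0,R]$ of length $\lesssim\rho$, where $R:=3K\sqrt{(|\bar\lambda|+\delta)/\bar t}$ and $\rho:=\delta/\sqrt{(|\bar\lambda|+\delta)\bar t}$, and one checks $\delta\lesssim\rho\le R$. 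Converting the polar-coordinate integration of Lemma~\ref{dznbhd1} into a covering — cover $S^{n-2}$ by $\lesssim(R/\rho)^{n-2}$ caps of radius $\rho/R$, and note that over each cap the corresponding part of $P_{f,g}$ sits in one ball of radius $\sim\rho$ (using $\max\hat I_\xi\le R$, $|\hat I_\xi|\lesssim\rho$, and the Lipschitz dependence of the endpoints of $\hat I_\xi$ on $\xi$ at scale $R$) — yields a cover $\calB$ of $P_{f,g}$ by $\lesssim(R/\rho)^{n-2}\sim((|\bar\lambda|+\delta)/\delta)^{n-2}$ balls of radius $\sim\rho$. Then
\[
\sum_{B\in\calB}(\operatorname{rad}B)^{n-2}(\operatorname{rad}B/\delta)^{s}\lesssim\Big(\tfrac{|\bar\lambda|+\delta}{\delta}\Big)^{n-2}\rho^{n-2}(\rho/\delta)^{s}=(|\bar\lambda|+\delta)^{\frac{n-2-s}{2}}\,\bar t^{-\frac{n-2+s}{2}}\lesssim\bar t^{-s},
\]
the last inequality using $n-2-s\ge0$ (here $n\ge3$, $s\le1$) together with $|\bar\lambda|+\delta\lesssim\bar t$. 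Combining the two cases with the ball-by-ball estimate completes the proof.

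The step I expect to be the main obstacle is obtaining the covering with the correct number \emph{and} radius of balls. Merely knowing $\calL^{n-1}(P_{f,g})\lesssim\delta/\bar t$ only yields $|P_{f,g}|_\delta\lesssim\delta^{-(n-2)}\bar t^{-1}$, which is strictly weaker than the target $\delta^{-\epsilon}\delta^{-(n-2)}\bar t^{-s}$; and the naive refinement — chopping the base of the transversal foliation, or the enclosing ball $B(x_M,R)$ in the tangent case, into $\delta$-cubes — over-counts and loses a factor $\bar t^{-(n-2)}$. One must instead exploit that $P_{f,g}$ is a thin tube of width $\sim\delta/\bar t$ about an $(n-2)$-dimensional level surface in the transversal case, respectively a thin annulus of width $\sim\delta/\sqrt{(|\bar\lambda|+\delta)\bar t}$ about an $(n-2)$-sphere of radius $\sim\sqrt{(|\bar\lambda|+\delta)/\bar t}$ in the tangent case, and cover that surface directly at the tube's own width; the final exponent count — and in particular the one place $n\ge3$ is genuinely used — is the short computation displayed in the tangent case.
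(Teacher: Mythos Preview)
Your strategy coincides with the paper's: produce an efficient covering of $P_{f,g}$ by balls of a single scale, then apply the $(\delta,n-2+s,\delta^{-\epsilon})$-set property of $E$ ball by ball. In the transversal case your volume--then--packing argument and the paper's direct push-forward through the Lipschitz foliation $\Psi$ yield the same covering by $\lesssim(\bar t/\delta)^{n-2}$ balls of radius $\delta/\bar t$, and the final computation is identical.

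The tangent case is where the tactics genuinely differ. The paper splits into $\bar\lambda\le\delta$ (a single ball of radius $\sim\sqrt{\delta/\bar t}$) and $\bar\lambda>\delta$; in the latter it exploits the convexity of $h$ on $2U$ to sandwich $P_{f,g}$ inside a $C\rho$-tube about the convex hypersurface $\partial\{h\le2\delta\}$, bounds $\mathcal{H}^{n-2}$ of that surface via the convex comparison Lemma~\ref{convset}, and reads off a covering by $\lesssim(\bar\lambda/\delta)^{n-2}$ balls of radius $\rho$. Your polar-coordinate covering by spherical caps is a legitimate alternative that avoids Lemma~\ref{convset}, but the step you gloss over --- that the endpoints of $\hat I_\xi$ depend on $\xi$ with Lipschitz constant $\lesssim R$ --- needs one extra line: since $\nabla h(x_M)=0$ and $|\nabla^2h|\le\bar t$, one has $|\nabla h(x_M+s\xi)|\le\bar t s$, so $|\partial_\theta\bar h_\xi(s)|\le\bar t s^2\le\bar t R^2\sim|\bar\lambda|+\delta$, while $|\partial_s\bar h_\xi(s)|\gtrsim\bar t s\gtrsim\sqrt{(|\bar\lambda|+\delta)\bar t}$ at the endpoints, giving $|\partial_\theta s_\pm|\lesssim R$ as required. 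With this filled in, both routes are correct and produce the same covering data.
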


\begin{proof}
  If $P_{f,g} = \emptyset$ and $\delta \le t:=\|f-g\|_{C^2([0,1]^{n-1})} \le 4[\frac{10^5K^4}{\az(K^{-1}/6)^2}]^2\delta$, it is easy to see \eqref{prfg} holds. In the following we assume $t>4[\frac{10^5K^4}{\az(K^{-1}/6)^2}]^2\delta$.

  Let $\{U\}$ be the dyadic cubes in the above proof. Similar to the proof of the above lemma, to show \eqref{prfg}, it suffices to prove
  \begin{equation}\label{prfg2}
   |E \cap P_{f,g} \cap U|_\dz \lesssim \delta^{-\epsilon} \frac{\dz^{-(n-2)}}{\|f-g\|_{C^2([0,1]^{n-1})}^s}
  \end{equation}
  for each $U$.
  To this end, we fix $U \subset [0,1]^{n-1}$ and write $P_{f,g}=P_{f,g} \cap U$ for simplicity.
  In the following, let all the notations be same as those in the proof of Lemma \ref{dznbhd1}.

If we are in \textit{Case 1} (and also \textit{Subcase 2-1}), then
(by considering $\pa U^+$ in each facet of $\pa U$), we know
$  P_{f,g} = \Psi(\Omega) $  where $\Omega= \cup_{x \in \pa U^+}\{(\bar x, I_x)\} \subset \R^{n-1}$. Recall from \eqref{1dim} that $|I_x| \sim \frac{\dz}{\sqrt{\lz t}} \sim \frac{\dz}{t}$.
Thus $\Omega$ can be covered by $C (\frac{t}{\dz})^{n-2}$ many balls $B^{n-1}(x_i,\frac{\dz}{t})$ of radius $\frac{\dz}{t}$.
Since $\|\mbox{Lip }\Psi\|_{L^\fz(\Omega)} \lesssim 1$, we know
$P_{f,g}$ can be covered by $\bar C (\frac{t}{\dz})^{n-2}$ many balls $B^{n-1}(y_i,\frac{\dz}{t})$ of radius $\frac{\dz}{t}$.

Since $\frac{\dz}{t} \ge \dz$ and $E$ is a $(\dz,n-2+s,\dz^{-\ez})$-set, we have
\begin{align*}
  |E \cap P_{f,g}|_\dz & \le \sum_{i=1}^{\bar C (t\dz^{-1})^{n-2}} |E \cap B^{n-1}(y_i, \frac{\dz}{t})|_\dz \\
   & \le \bar C(t\dz^{-1})^{n-2}\delta^{-\epsilon}(\frac{\dz}{t})^{n-2+s} |E|  \\
   & \lesssim \delta^{-\epsilon} \frac{\dz^{-(n-2)}}{t^s}=\delta^{-\epsilon} \frac{\dz^{-(n-2)}}{\|f-g\|_{C^2([0,1]^{n-1})}^s}
\end{align*}
which gives \eqref{prfg}.

\medskip
  Assume we are in \textit{Subcase 2-2}. Recall $\bar t:= \|f-g\|_{C^2(I_F^{n-1})} \ge t=\|f-g\|_{C^2([0,1]^{n-1})}$.

  We first consider the case $\bar \lz \in [0,\dz] $. In this case, noting that $\sqrt{(\bar\lambda +\dz) / \bar t} \le 2\sqrt{\dz / \bar t}$ and recalling \eqref{omega}, we know
  we know $  P_{f,g} \subset B(x_M, 6K\sqrt{\dz / \bar t})$.
  As a result, noting that $n-2+s \ge 1+s \ge 2s$,  we have
\begin{align*}
  |E \cap P_{f,g}|_\dz & \le  |E \cap B(x_M, 6K\sqrt{\dz / \bar t})|_\dz \\
   & \le \delta^{-\epsilon}(6K\frac{\dz}{\bar t })^{n-2+s} |E|  \\
   & \lesssim \delta^{-\epsilon} \frac{\dz^s}{\bar t^s}\dz^{-(n-2+s)} \le \delta^{-\epsilon} \frac{\dz^{-(n-2)}}{\|f-g\|_{C^2([0,1]^{n-1})}^s}
\end{align*}
which gives \eqref{prfg}.

   For the case $\bar \lz > \dz $, noting that $\sqrt{(\bar\lambda +\dz) / \bar t} \le 2\sqrt{\bar\lambda  / \bar t}$ and recalling \eqref{inc1},
  we know $  P_{f,g} \subset B(x_M, 6K\sqrt{\bar\lambda  / \bar t})$. Moreover, recalling the standing assumption in \textit{Subcase 2-2} that $\Delta ={\Delta}(f,g) <  \frac{\az(K^{-1}/6)^2}{10^5K^4}\|h\|_{C^2(I^{n-1}_F)} = \frac{\az(K^{-1}/6)^2}{10^5K^4} \bar t$ and inequality \eqref{barlz} that $\bar \lz \le 10K^2 \Delta$, we know
  $$ 6K\sqrt{\bar\lambda  / \bar t} \le 6K \frac{\az(K^{-1}/6)}{100K} \le \frac{\az(K^{-1}/6)}{16} \le \frac1{4}\diam U $$
  where we recall $\frac{\az(K^{-1}/6)}4 < \diam U < \frac{\az(K^{-1}/6)}2$.
  By further recalling $x_M \in \frac65 U$, we deduce that
  $$ P_{f,g} \subset B(x_M, 6K\sqrt{\bar\lambda  / \bar t}) \subset \frac32 U \subset 2U. $$

   On the other hand,
   by Corollary \ref{uniq} (B), we
   can assume $h$ is strictly convex in $2U$. Thus
   $$  \Lambda_{2\dz}(h):=\{x \in 2U : h(x) \le 2\dz\}$$
   is a convex set. Since $x_M$ is the unique point in $2U$ such that $\nabla h(x) =0$, we know $x_M$ is the unique point reaches the minimum of $h$.
   And we may assume $h(x_M) < 2\dz$ (otherwise $h(x) > h(x_M) \ge 2\dz$ for all $x \in 2U$ and $P_{f,g}=\{x_M\}$). In particular,
   $x_M \notin \pa \Lambda_{2\dz}(h)= \{x \in 2U : h(x) =2\dz\}$. Hence for all $x \in \pa \Lambda_{2\dz}(h)$, we have $\nabla h(x) \ne 0$. Applying the preimage theorem for $h$, we know $\pa \Lambda_{2\dz}(h)$ is an $(n-2)$-dimensional manifold. Moreover, $\{x \in 2U : h(x) <2\dz\}$ is a nonempty open set and $\calL^{n-1}(\Lambda_{2\dz}(h))>0$.

   Noting that $\Lambda_{2\dz}(h) \subset B(x_M, 6K\sqrt{\bar\lambda  / \bar t})$ (since $\pa \Lambda_{2\dz}(h) \subset P_{f,g} \subset B(x_M, 6K\sqrt{\bar\lambda  / \bar t})$ and $\Lambda_{2\dz}(h)$ is convex),
   applying Lemma \ref{convset}, we have
   \begin{equation}\label{conv2}
      \calH^{n-2}(\pa \Lambda_{2\dz}(h)) \le \calH^{n-2}(\pa B(x_M, 6K\sqrt{\bar\lambda  / \bar t})) \lesssim \sqrt{\bar\lambda  / \bar t}^{n-2}.
   \end{equation}
   For any $x \in P_{f,g}= \{x \in 2U : |h(x)| \le 2\dz\} \subset \Lambda_{2\dz}(h)$, we know $x=x_M+\xi_x \tau$ for some $\xi_x \in S^{n-2}$ and $\tau \in \hat I_{\xi_x}$. We also know that there exists $\tau' \in \hat I_{\xi_x}$ such that $x_M+\xi_x \tau' \in \pa \Lambda_{2\dz}(h)= \{x \in 2U : h(x) = 2\dz\}$. Using $|\tau -\tau'| \le |\hat I_{\xi_x}| \lesssim\dz /\sqrt{ \bar\lambda\bar t}$, we know
   \begin{equation*}
     \dist(x, \pa \Lambda_{2\dz}(h) )  \le |\tau -\tau'| \lesssim\dz /\sqrt{ \bar\lambda\bar t}, \quad x \in P_{f,g}.
   \end{equation*}
   Consequently, we know $P_{f,g} \subset [\pa \Lambda_{2\dz}(h)]^{C_1\dz /\sqrt{ \bar\lambda\bar t}}$ for some constant $C_1>0$ where $[\pa \Lambda_{2\dz}(h)]^a$ is the $a$-neighbourhood of $\pa \Lambda_{2\dz}(h)$.
   Combining \eqref{conv2} and by an elementary computation,
   we know
 $P_{f,g}$ can be covered by $C_2 (\frac{\bar \lz}{\dz})^{n-2}$ many balls $B^{n-1}(x_i,\dz/\sqrt{\bar \lz \bar t})$ of radius $\dz/\sqrt{\bar \lz \bar t}$ for some constant $C_2>0$.

Since $\dz/\sqrt{\bar \lz \bar t} \ge \dz$ and $E$ is a $(\dz,n-2+s,\dz^{-\ez})$-set, recalling that $\bar\lz \le \bar t$ and $t = \|f-g\|_{C^2([0,1]^{n-1})}\le \bar t$, we have
\begin{align*}
  |E \cap P_{f,g}|_\dz & \le \sum_{i=1}^{C_2(\bar \lz\dz^{-1})^{n-2}} |E \cap B^{n-1}(y_i, \dz/\sqrt{\bar \lz \bar t})|_\dz \\
   & \le C_2(\bar \lz \dz^{-1})^{n-2}\delta^{-\epsilon}(\dz/\sqrt{\bar \lz \bar t})^{n-2+s} |E|  \\
   & = C_2\delta^{-\epsilon} \frac{\bar \lz^{(n-2-s)/2}}{\bar t^{(n-2+s)/2}} \dz^{s}|E| \\
   & \lesssim \delta^{-\epsilon} \frac{\dz^{-(n-2)}}{\bar t^s} \le \delta^{-\epsilon} \frac{\dz^{-(n-2)}}{\|f-g\|_{C^2([0,1]^{n-1})}^s}
\end{align*}
which gives \eqref{prfg}.

The proof is complete.
\end{proof}

We are able to show Theorem \ref{maxfcn}.

\begin{proof}[Proof of Theorem \ref{maxfcn}]
Let $2\log (1/\dz_0)< \dz_0^{-\ez}$.

   For each $f \in F$ and $i \in  \mathbb{N}$, define
   $$  F_i(f):= \{ g \in F : \|f-g\|_{C^2([0,1]^{n-1})}  \in (2^{-(i+1)},2^{-i}]  \} . $$
   Note that if $i \ge \log (1/\dz)$, then $F_i(f) =\{f\}$ due to $F$ being $\delta$-separated.
   Since $F$ is $(\delta,t,\dz^{-\ez})$-set, we have $|F_i(f)| \le \dz^{-\ez}2^{-it}|F|$.
   Using Lemma \ref{dznbhd1}, we have
   \begin{align}\label{l22}
    \sum_{f \in F}\sum_{g \in F, g \ne f} \calL^n(f^\dz \cap g^\dz) &= \sum_{f \in F}\sum_{i =1}^{\log (1/\dz)}\sum_{g \in F_i(f)}  \calL^n(f^\dz \cap g^\dz) \notag\\
    &\lesssim \dz^{-\ez} |F| \sum_{f \in F}\sum_{i=1}^{\log (1/\dz)}2^{-it} \frac{\dz^2}{2^{-(i+1)}} \notag \\
    & \le  2\dz^{-\ez}|F|\dz^2\sum_{f \in F} \sum_{i=1}^{\log (1/\dz)} 2^{(1-t)i} \notag\\
    & \le 2\dz^{-\ez}|F|^2 \dz^{1+t} \log (1/\dz)
     \le \dz^{-2\ez}|F|^2 \dz^{1+t}
   \end{align}
   where in the second last inequality we note that $\sum_{i=1}^{\log (1/\dz)} 2^{(1-t)i} \le \sum_{i=1}^{\log (1/\dz)} 2^{(1-t)\log (1/\dz) } = \log (1/\dz) \dz^{t-1} $.

   Notice that
   \begin{equation}\label{l21}
     \int_{[0,1]^n}  (\sum_{f \in F} \chi_{f^\dz})^2 \, dx = \sum_{f \in F}\sum_{g \in F}\int_{[0,1]^n}  \chi_{f^\dz}\chi_{g^\dz} \, dx = \sum_{f \in F}\sum_{g \in F}  \calL^n(f^\dz \cap g^\dz).
   \end{equation}
   Combining \eqref{l22} and \eqref{l21}, we have
   \begin{align*}
     \int_{ [0,1]^n}  (\sum_{f \in F} \chi_{f^\dz})^2 \, dx & = \sum_{f \in F} \calL^n(f^\dz )  + \sum_{f \in F}\sum_{g \in F, g \ne f}  \calL^n(f^\dz \cap g^\dz) \\
      & \lesssim |F| \dz +\dz^{-2\ez}|F|^2 \dz^{1+t} \lesssim \dz^{-2\ez}|F|^2 \dz^{1+t} ,
   \end{align*}
   which gives \eqref{l2} as desired.
\end{proof}

\section{Furstenberg sets of hypersurfaces}

In this section, we focus on the dimension lower bound of Furstenberg sets of hypersurfaces. Precisely, we prove Theorem \ref{thmconfig} and use Theorem \ref{thmconfig} to show Theorem \ref{dimf}.

First, we make some reductions before showing Theorem \ref{dimf}.
Let $E \subset [0,1]^{n}$ be a $(s, t)_{n-1}$-Furstenberg set
 with parameter set $F \subset C^2([0,1]^{n-1}, [0,1])$ with $\Hd F \geq t$.
Note that if $0 < t \le s \le 1$, then the desired bound is $n-2+s+t$. Otherwise, the desired bound is $n-2+2s$. Noticing that once the bound $n-2+s+t$ is proved, the bound $n-2+2s$ immediately follows, since if $t \ge s$, any $(n-2+s,t)_{n-1}$-Furstenberg set contains an $(n-2+s,s)_{n-1}$-Furstenberg set. It suffices to show the bound is $n-2+s+t$ when $0 < t \le s \le 1$.

In the following of this section, we assume $0 < t \le s \le 1$.
To show Theorem \ref{thmconfig}, we also make some reductions. Let $\Omega$ be the $(\delta,s,t,\delta^{-\epsilon})_{n-1}$-configuration in Theorem \ref{thmconfig}. Write $F := \pi_{1}(\Omega) \subset C^2([0,1]^{n-1})$, and $E(f) = \{x \in \R^{n} : (f,x) \in \Omega\} \subset \mbox{graph}(f)$.
By replacing $F$ and $E(f)$ by maximal $\delta$-separated subsets, we may assume that $F$, $E(f)$, and $\Omega$ are finite and $\delta$-separated to begin with. Furthermore, $F$ contains a $(\delta,t,\delta^{-2\epsilon})$-subset $\bar{F} \subset F$ of cardinality $\frac12 \delta^{2\epsilon}\delta^{-t} \le |\bar{F}| \leq \delta^{-t}$ by \cite[Lemma 2.7]{2021arXiv210603338O}.
 Recalling $E(f)$ is a $(\delta,n-2+s,\delta^{-\epsilon})$-set implies that $M = |E(f)| \ge \delta^{\epsilon-(n-2+s)}$. If $M> \delta^{-(n-2+s)}$, we can always find a $(\delta,n-2+s,\delta^{-2\epsilon})$-set $\bar E(f) \subset E$ such that $\bar M \equiv|\bar E(f)| \ge \delta^{2\epsilon-(n-2+s)}$ for all $f\in \bar{F}$.
 Then $\bar{\Omega} := \{(f,x) : f\in \bar{F} \text{ and } x \in \bar E(f)\}$ remains a $(\delta,s,t,\delta^{-2\epsilon})_{n-1}$-configuration with $|\bar E(f)| \equiv \bar M$.
Apparently,  it suffices to prove Theorem \ref{thmconfig} for this sub-configuration, so we may assume that
\begin{equation}\label{bdM}
 |F| \in [ \frac12\delta^{2\epsilon-t},\delta^{-t}] \text{ and } M \in [ \delta^{2\epsilon-(n-2+s)},\delta^{-(n-2+s)}]
\end{equation}
to begin with. Finally,  $F$ is a cinematic family implies that $\|f\|_{C^2([0,1]^{n-1})} \le K$ for all $f \in F$.

The above notions and assumptions will be fixed throughout this section. We need several auxiliary lemmas.

\begin{lemma} \label{csh}
Let $A_1, \cdots, A_m \subset \R^n$ be $m$ Lebesgue measurable subsets of finite Lebesgue measure. Then
$$  \calL^n(\bigcup_{i=1}^m A_i)  \ge \frac{[\sum_{i=1}^{m} \calL^n(A_i) ]^2}{\sum_{i,j=1}^m\calL^n( A_i\cap A_j)} . $$
\end{lemma}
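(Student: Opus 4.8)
This is the standard Cauchy--Schwarz ($L^1$--$L^2$) argument, sometimes called the second moment method. The plan is to introduce the multiplicity function $f := \sum_{i=1}^{m} \chi_{A_i}$ on $\R^n$ and note that $f$ is supported on the union $U := \bigcup_{i=1}^{m} A_i$. Since all the $A_i$ have finite Lebesgue measure, $f \in L^1(\R^n) \cap L^2(\R^n)$, so all integrals below are finite and the manipulation is legitimate.

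First I would compute the first moment: $\int_{\R^n} f \, d\calL^n = \sum_{i=1}^m \calL^n(A_i)$. Next the second moment: expanding the square, $\int_{\R^n} f^2 \, d\calL^n = \int_{\R^n} \sum_{i,j=1}^m \chi_{A_i}\chi_{A_j} \, d\calL^n = \sum_{i,j=1}^m \calL^n(A_i \cap A_j)$, using $\chi_{A_i}\chi_{A_j} = \chi_{A_i \cap A_j}$ and monotone/linearity of the integral over the finite sum. Then, applying the Cauchy--Schwarz inequality to the pair $f \cdot \chi_U$ and $\chi_U$ (equivalently, writing $f = f\chi_U$ and pairing with $1$ on the finite-measure set $U$),
\begin{displaymath}
  \left( \sum_{i=1}^m \calL^n(A_i) \right)^2 = \left( \int_{U} f \cdot 1 \, d\calL^n \right)^2 \le \calL^n(U) \cdot \int_{\R^n} f^2 \, d\calL^n = \calL^n(U) \sum_{i,j=1}^m \calL^n(A_i \cap A_j).
\end{displaymath}
Rearranging gives the claimed bound whenever the denominator $\sum_{i,j}\calL^n(A_i\cap A_j)$ is strictly positive.

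The only thing to be careful about is the degenerate case: if $\sum_{i,j}\calL^n(A_i\cap A_j) = 0$, then in particular each $\calL^n(A_i) = \calL^n(A_i\cap A_i) = 0$, so the numerator on the right-hand side of the asserted inequality is $0$ and the inequality $\calL^n(U) \ge 0$ holds trivially (and one may interpret $0/0$ as $0$); I would dispatch this at the start. There is no real obstacle here — the argument is a one-line Cauchy--Schwarz estimate — so the ``hard part'' is merely bookkeeping the finiteness hypothesis to justify integrability and the expansion of $f^2$ over the finite index set.
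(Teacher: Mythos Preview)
Your proof is correct and is essentially identical to the paper's: the paper also sets $f=\sum_i \chi_{A_i}$, notes that it is supported on $\bigcup_i A_i$, and applies Cauchy--Schwarz to bound $\|f\|_{L^1}$ by $\calL^n(\bigcup_i A_i)^{1/2}\|f\|_{L^2}$. Your treatment of the degenerate case is slightly more explicit than the paper's, but the argument is the same.
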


\begin{proof}
  Applying Cauchy-Schwarz inequality to $\sum_{i=1}^{m}\chi_{ A_i}$ and noting that $\|\sum_{i=1}^{m}\chi_{ A_i}\|_{L^2(\R^n)} =\sum_{i,j=1}^m\calL^n( A_i\cap A_j)$, we have
  $$ \|\sum_{i=1}^{m}\chi_{ A_i}\|_{L^1(\R^n)} = \int_{\spt (\sum_{i=1}^{m}\chi_{ A_i})} |\sum_{i=1}^{m}\chi_{ A_i}(x)| \, d \calL^n  (x) \le  \calL^n(\bigcup_{i=1}^m A_i)^{1/2} \|\sum_{i=1}^{m}\chi_{ A_i}\|_{L^2(\R^n)}    $$
  as desired.
\end{proof}

In the remaining part of this section, for any set $A \subset [0,1]^n$, let $P_{A}$ be the projection of $A$ to the first $n-1$ coordinates.

\begin{lemma} \label{pe}
There exist $\epsilon = \ez(s,n,K), \bar\delta_{0}=\bar\delta_{0}(\ez) \in (0,\tfrac{1}{2}]$ such that the following holds for all $\delta \in (0,\bar\delta_{0}]$. Let $F$ and $E(f)$ be as above.
  For any $f \in F$, $P_{E(f)}$ is a $(\delta,n-2+s,\delta^{-3\epsilon})$-set.
\end{lemma}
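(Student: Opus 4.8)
The key observation is that the restriction of the coordinate projection $P\colon\R^{n}\to\R^{n-1}$, $P(x_1,\dots,x_{n-1},y)=(x_1,\dots,x_{n-1})$, to $\mathrm{graph}(f)$ is bi-Lipschitz with a constant depending only on $K$, and that bi-Lipschitz images of $(\delta,\sigma,C)$-sets are $(\delta,\sigma,C')$-sets with only a bounded loss in the constant. Since $F$ is a cinematic family, $\|f\|_{C^2([0,1]^{n-1})}\le K$, so $\|\nabla f\|_{L^\infty}\le K$ and the graph map $\iota(x):=(x,f(x))$ is $L$-Lipschitz with $L:=\sqrt{1+K^2}$, while $P$ is $1$-Lipschitz; moreover $P$ and $\iota$ are mutually inverse bijections between $\mathrm{graph}(f)$ and $[0,1]^{n-1}$, with $P_{E(f)}=P(E(f))$ and $E(f)=\iota(P_{E(f)})$. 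From $E(f)=\iota(P_{E(f)})$ one gets $M=|E(f)|_\delta\lesssim_{K,n}|P_{E(f)}|_\delta$ (a cover of $P_{E(f)}$ by $N$ balls of radius $\delta$ yields a cover of $E(f)$ by $N$ sets of diameter $\le 2L\delta$, each needing $\lesssim_{K,n}1$ balls of radius $\delta$), and $|P_{E(f)}|_\delta\lesssim_{n}|E(f)|_\delta=M$ since $P$ is $1$-Lipschitz; hence $|P_{E(f)}|_\delta\sim_{K,n}M$.

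For the local estimate, fix $y\in[0,1]^{n-1}$ and $r\ge\delta$; we may assume $P_{E(f)}\cap B(y,r)\neq\emptyset$, and after recentering the ball at a point $z\in P_{E(f)}\cap B(y,r)$ and doubling its radius, that $y=z=P(w)$ for some $w=(z,f(z))\in E(f)$. If $z'\in P_{E(f)}\cap B(z,2r)$ and $z'=P(w')$ with $w'\in E(f)$, then $|w-w'|\le L|z-z'|\le 2Lr$ because $\iota$ is $L$-Lipschitz; thus $P_{E(f)}\cap B(y,r)\subseteq P\big(E(f)\cap B(w,2Lr)\big)$. Using that $P$ is $1$-Lipschitz, that $2Lr\ge\delta$, and that $E(f)$ is a $(\delta,n-2+s,\delta^{-\epsilon})$-set, we obtain
\begin{equation*}
|P_{E(f)}\cap B(y,r)|_\delta\;\le\;|E(f)\cap B(w,2Lr)|_\delta\;\le\;(2L)^{\,n-2+s}\,\delta^{-\epsilon}\,r^{\,n-2+s}\,M\;\lesssim_{K,n}\;(2L)^{\,n-2+s}\,\delta^{-\epsilon}\,r^{\,n-2+s}\,|P_{E(f)}|_\delta .
\end{equation*}
Consequently there is $C_0=C_0(K,n,s)$ with $|P_{E(f)}\cap B(y,r)|_\delta\le C_0\,\delta^{-\epsilon}\,r^{\,n-2+s}\,|P_{E(f)}|_\delta$ for every $y$ and every $r\ge\delta$.

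It remains to absorb the loss: choose $\epsilon=\epsilon(s,n,K)>0$ (any fixed small value is admissible, since this lemma places no upper constraint on $\epsilon$) and then $\bar\delta_0=\bar\delta_0(\epsilon)\in(0,\tfrac12]$ small enough that $C_0\le\delta^{-2\epsilon}$ for all $\delta\in(0,\bar\delta_0]$, which is possible because $C_0$ depends only on $s,n,K$. For such $\delta$ we get $C_0\,\delta^{-\epsilon}\le\delta^{-3\epsilon}$, so $P_{E(f)}$ is a $(\delta,n-2+s,\delta^{-3\epsilon})$-set, as claimed. There is no genuine obstacle here; the only slightly delicate points are the standard recentering trick for the ball $B(y,r)$ and the bookkeeping of the implied constants (which involve $K$ only through $L=\sqrt{1+K^2}$, together with $n$ and $s$). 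Cinematicity of $F$ enters solely through the uniform bound $\|f\|_{C^2([0,1]^{n-1})}\le K$.
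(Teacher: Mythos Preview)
Your proof is correct and follows essentially the same approach as the paper: both exploit that the projection $P$ restricted to $\mathrm{graph}(f)$ is bi-Lipschitz with constant depending only on $K$ (since $\|\nabla f\|_\infty\le K$), and then transfer the $(\delta,n-2+s,\cdot)$-set property from $E(f)$ to $P_{E(f)}$ with a constant loss absorbed by one extra factor of $\delta^{-\epsilon}$. The only cosmetic difference is that the paper covers the preimage $\{y\in E(f):\bar y\in B^{n-1}(x,r)\}$ by $\sim K$ balls of radius $4r$ stacked in the vertical direction, whereas you use the recentering trick and a single ball $B(w,2Lr)$ via the graph map $\iota$; both lead to the same estimate.
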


\begin{proof}
Let $\bar\dz_0^{-\ez} > 4^{n-2+s}(2K)^{n-1} $.
Note that $P_{E(f)} \subset [0,1]^{n-1} \subset \R^{n-1}$. We need to check for any ball $B^{n-1}(x,r) \subset \R^{n-1}$ with $r\ge \dz$,
$$   |P_{E(f)} \cap B^{n-1}(x,r)|_{\dz} \le \delta^{-3\epsilon} r^{n-2+s}|P_{E(f)}|_\dz.  $$
Since $\|f\|_{C^2([0,1]^{n-1})} \le K$, graph$(f)$ is a $K$-Lipschitz graph. Thus by $E(f)$ being $\dz$-separated, we know $P_{E(f)}$ is at least $(2K)^{-1}\delta$-separated and $|P_{E(f)}|_\dz \ge (2K)^{-(n-1)}|P_{E(f)}|= (2K)^{-(n-1)}|E(f)|=(2K)^{-(n-1)}M$. Thus it suffices to show
\begin{equation}\label{p2}
   |P_{E(f)} \cap B^{n-1}(x,r)|_{\dz} \le \delta^{-3\epsilon}r^{n-2+s}(2K)^{-(n-1)}M \quad x \in \R^{n-1}, \ r \ge \dz.
\end{equation}
In the following, for any $y \in \R^n$, we write $y=(\bar y, y_n)$ where $\bar y = (y_1, \cdots, y_{n-1})$.
Notice that
\begin{equation}\label{p1}
  P_{E(f)} \cap B^{n-1}(x,r) = \{ \bar y : y \in E(f), \ \bar y \in B^{n-1}(x,r) \}.
\end{equation}
Since graph$(f)$ is a $K$-Lipschitz graph, we know that the set
$\{y \in E(f): \bar y \in B^{n-1}(x,r)\}$ can be covered by $K$ balls $B^n(z_i,4r) \subset \R^n$ with $\bar z_i =x$ for all $i=1,\cdots ,K$. Using that $E(f)$ is a $(\delta,n-2+s,\delta^{-2\epsilon})$-set and $|E(f)|=M$, we deduce that
$$ |\{y \in E(f): \bar y \in B^{n-1}(x,4r)\}|_{\dz} \le \sum_{i=1}^K |\{y \in E(f): y \in B^{n}(z_i,4r)\}|_{\dz} \le  \delta^{-2\epsilon} (4r)^{n-2+s}M. $$
Recalling \eqref{p1}, we know
$$  |P_{E(f)} \cap B^{n-1}(x,4r)|_\dz \le |\{y \in E(f): \bar y \in B^{n-1}(x,4r)\}|_{\dz} \le  \delta^{-2\epsilon} (4r)^{n-2+s}M $$
and by $4^{n-2+s}(2K)^{n-1} < \delta^{-\epsilon}$, we conclude \eqref{p2}.
\end{proof}

\begin{lemma} \label{enbhd} For any $\ez>0$, let $\bar \dz_0$ be as in Lemma \ref{pe}. For all $\delta \in (0, \bar \delta_{0}]$,
  let $E^\dz(f)$ be the $\dz$-neighbourhood of $E(f)$ for all $f \in F$. Then
  \begin{equation}\label{3ez}
    \calL^n(E^\dz(f) \cap E^\dz(g)) \lesssim \delta^{-3\epsilon} \frac{\dz^2}{\|f-g\|_{C^2([0,1]^{n-1})}^s}.
  \end{equation}
\end{lemma}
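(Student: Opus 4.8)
The plan is to deduce \eqref{3ez} from Lemma~\ref{shape} via Lemma~\ref{pe}. The first step is a geometric inclusion: since $E(f)\subset\mathrm{graph}(f)$ and $\|f\|_{C^2([0,1]^{n-1})}\le K$ makes $\mathrm{graph}(f)$ a $K$-Lipschitz graph, any $\delta$-ball centred at a point of $E(f)$ lies in the vertical slab $f^{(1+K)\delta}$; hence $E^\delta(f)\subset f^{(1+K)\delta}$ and $E^\delta(g)\subset g^{(1+K)\delta}$, so
\[
  E^\delta(f)\cap E^\delta(g)\subset f^{(1+K)\delta}\cap g^{(1+K)\delta}.
\]
In particular this set has vertical thickness at most $2(1+K)\delta$, so if $P$ denotes its orthogonal projection to the first $n-1$ coordinates, Fubini gives $\calL^n(E^\delta(f)\cap E^\delta(g))\lesssim\delta\,\calL^{n-1}(P)\lesssim\delta^{n}|P|_\delta$. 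Thus it suffices to show $|P|_\delta\lesssim\delta^{-3\epsilon}\,\delta^{-(n-2)}/\|f-g\|_{C^2([0,1]^{n-1})}^s$.

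Next I would locate $P$. On the one hand $P\subset (P_{E(f)})^\delta$, since $E^\delta(f)$ projects into the $\delta$-neighbourhood of $P_{E(f)}$. On the other hand, writing $h=f-g$, every point of $f^{(1+K)\delta}\cap g^{(1+K)\delta}$ satisfies $|h|\le 2(1+K)\delta$ on its first $n-1$ coordinates, so $P\subset\{|h|\le 2(1+K)\delta\}$, which (exactly as in the proof of Lemma~\ref{dznbhd1}, where $P_{f,g}=\{|h|\le 2\delta\}$) is the projection of $f^{c\delta}\cap g^{c\delta}$ with $c=1+K$. Since $h$ is $2K$-Lipschitz, a routine manipulation of $\delta$-neighbourhoods then yields
\[
  |P|_\delta\le|(P_{E(f)})^\delta\cap\{|h|\le 2(1+K)\delta\}|_\delta\lesssim|P_{E(f)}\cap\{|h|\le C\delta\}|_\delta
\]
for a constant $C=C(K)$, and $\{|h|\le C\delta\}$ is the projection of $f^{(C/2)\delta}\cap g^{(C/2)\delta}$.

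Now I would invoke Lemma~\ref{pe}: for $\delta$ small (depending on $\epsilon,n,K$) the set $P_{E(f)}$ is a $(\delta,n-2+s,\delta^{-3\epsilon})$-set, and by \eqref{bdM} its $\delta$-covering number is at most $M\le\delta^{-(n-2+s)}$; consequently it is also a $(C'\delta,n-2+s,C_K(C'\delta)^{-3\epsilon})$-set of $C'\delta$-covering number $\le C_K(C'\delta)^{-(n-2+s)}$ for any fixed $C'=C'(K)$. Hence Lemma~\ref{shape}, applied with $C'\delta$ in place of $\delta$ (its proof is insensitive to the harmless dilation $\delta\mapsto C'\delta$ and to replacing the $(\delta,\sigma,\delta^{-\epsilon})$-constant by a $K$-multiple of it, all such constants being absorbed into $\lesssim$), gives
\[
  |P_{E(f)}\cap\{|h|\le C\delta\}|_{C'\delta}\lesssim (C'\delta)^{-3\epsilon}\frac{(C'\delta)^{-(n-2)}}{\|f-g\|_{C^2([0,1]^{n-1})}^s}\lesssim\delta^{-3\epsilon}\frac{\delta^{-(n-2)}}{\|f-g\|_{C^2([0,1]^{n-1})}^s},
\]
and passing from $C'\delta$- to $\delta$-covering numbers costs only a factor $\lesssim_K 1$. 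Combining with the first paragraph gives $\calL^n(E^\delta(f)\cap E^\delta(g))\lesssim\delta^{n}\cdot\delta^{-3\epsilon}\delta^{-(n-2)}/\|f-g\|_{C^2([0,1]^{n-1})}^s=\delta^{-3\epsilon}\delta^2/\|f-g\|_{C^2([0,1]^{n-1})}^s$, which is \eqref{3ez}.

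It remains to treat the range $\delta\le\|f-g\|_{C^2([0,1]^{n-1})}<C_1\delta$ not covered by Lemma~\ref{shape} (where $C_1=C_1(K)$ is its lower threshold on $\|f-g\|_{C^2}/\delta$). There the crude inclusion $E^\delta(f)\cap E^\delta(g)\subset E^\delta(f)$ together with $\calL^n(E^\delta(f))\lesssim\delta^{n}|E(f)|_\delta\lesssim\delta^{n}M\le\delta^{2-s}$ already suffices, since $\|f-g\|_{C^2}^s\le(C_1\delta)^s\le\delta^{-3\epsilon}\delta^s$ for $\delta$ small, whence $\delta^{2-s}\le\delta^{-3\epsilon}\delta^2/\|f-g\|_{C^2}^s$. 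The only real obstacle here is bookkeeping: realising the projection set $P$ cleanly inside the intersection of the fractal $(\delta,n-2+s)$-set $P_{E(f)}$ with a sub-level set of $h$ at a comparable scale, and checking that Lemma~\ref{shape} tolerates the scale dilation — the geometric content is entirely supplied by Lemmas~\ref{shape} and \ref{pe}.
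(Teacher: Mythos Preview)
Your proof is correct and follows essentially the same route as the paper: project to the first $n-1$ coordinates, observe that the projection lies in $P_{E(f)}\cap P_{f^{c\delta}\cap g^{c\delta}}$, invoke Lemma~\ref{pe} to get that $P_{E(f)}$ is a $(\delta,n-2+s,\delta^{-3\epsilon})$-set, and then apply Lemma~\ref{shape}. You are in fact more careful than the paper in two places: the paper writes $E^\delta(f)\cap E^\delta(g)\subset f^\delta\cap g^\delta$ where your $f^{(1+K)\delta}$ is the honest inclusion, and the paper silently applies Lemma~\ref{shape} at scale $\delta$ whereas you note the harmless dilation to $C'\delta$; your separate treatment of $\|f-g\|_{C^2}\sim\delta$ is redundant (Lemma~\ref{shape} already absorbs it) but does no harm.
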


\begin{proof}
Write $\|f-g\|_{C^2([0,1]^{n-1})} = d$.
Noting that $\dz$-balls in $\R^n$ has measure $\sim \dz^n$, it suffices to show,
   $$ |E^\dz(f) \cap E^\dz(g)|_{\dz} \lesssim \delta^{-3\epsilon} \frac{\dz^{-(n-2)}}{d^s}. $$
   Also, noticing $E^\dz(f) \cap E^\dz(g) \subset f^\dz \cap g^\dz$ has vertical thickness $\lesssim\dz$, we know
   $$  |P_{E^\dz(f) \cap E^\dz(g)}|_{\dz} \sim |E^\dz(f) \cap E^\dz(g)|_{\dz}. $$
   Observe that
   $$ P_{E^\dz(f) \cap E^\dz(g)} \subset P_{E^\dz(f)} \cap P_{f^\dz \cap g^\dz}, $$
   it further reduces to showing
   \begin{equation}\label{pef}
     |P_{E(f)} \cap P_{f^\dz \cap g^\dz}|_\dz \lesssim \delta^{-3\epsilon} \frac{\dz^{-(n-2)}}{d^s}.
   \end{equation}

   Applying Lemma \ref{pe} that $P_{E(f)}$ is a $(\delta,n-2+s,\delta^{-3\epsilon})$-set, recalling that $M \le \delta^{-(n-2+s)}$ and using Lemma \ref{shape} with $E=P_{E(f)}$ (where $P_{f^\dz \cap g^\dz}$ is denoted by $P_{f,g}$ in Lemma \ref{shape}), we deduce \eqref{pef}
   as desired.
\end{proof}

We are able  to prove Theorem \ref{thmconfig}. The proof is partially motivated by \cite[Appendix A]{MR4382473}.

\begin{proof}[Proof of Theorem \ref{thmconfig}]
  Let $C$ be the implicit constant in \eqref{3ez} and
  $ \bar \dz_0(\ez)$ be as in Lemma \ref{enbhd}. The relationship between $\dz_0=\dz_0(\ez)$ and $\ez$ in this theorem is
  \begin{equation}\label{4ez}
    \dz_0(\ez) < \bar\dz_0 \mbox{ and } 2C\log(1/\dz_0) < \frac12\dz_0^{-\ez}.
  \end{equation}

  Let $\mathcal{E}^\dz$ be the $\dz$-neighbourhood of $\mathcal{E}$. Then it suffices to show $\calL^n(\mathcal{E}^\dz) \geq \delta^{12\ez +2- t-s}$.
  Observe that
  $$  \mathcal{E}^\dz =   \bigcup_{f \in F} E^\dz(f).    $$
  Applying Lemma \ref{csh}, we have
  $$ \calL^n(\mathcal{E}^\dz) =  \calL^n(\bigcup_{f \in F} E^\dz(f)) \ge \frac{[\sum_{f \in F}\calL^n( E^\dz(f))]^2}{\sum_{f ,g\in F}\calL^n( E^\dz(f)\cap  E^\dz(g))}.  $$
  Recalling \eqref{bdM} and noting that
  $ \calL^n(E^\dz(f)) = M \dz^n \ge \dz^{2\ez} \dz^{2-s}$ for all $f \in F$ and $|F|\ge \frac12 \dz^{2\ez} \dz^{-t} \ge \dz^{3\ez} \dz^{-t}$, we deduce that
  $$ [\sum_{f \in F}\calL^n( E^\dz(f))]^2 \ge [ \dz^{5\ez +2 -t-s} ]^2.  $$
  It suffices to show
  \begin{equation}\label{e2}
    \sum_{f ,g\in F}\calL^n( E^\dz(f)\cap  E^\dz(g)) \le \dz^{-6\ez}\dz^{2-t-s}
  \end{equation}

  Similar to the proof of Theorem \ref{maxfcn}, for each $f \in F$ and $i \in  \mathbb{N}$, define
   $$  F_i(f):= \{ g \in F : \|f-g\|_{C^2([0,1]^{n-1})}  \in (2^{-(i+1)},2^{-i}]  \} . $$
   Note that if $i \ge \log (1/\dz)$, then $F_i(f) =\{f\}$ due to $F$ being $\delta$-separated.
   Since $F$ is $(\delta,t,\dz^{-2\ez})$-set, we have $|F_i(f)| \le \dz^{-2\ez}2^{-it}|F|$.
   Using Lemma \ref{enbhd}, we have
   \begin{align}\label{l23}
    \sum_{f \in F}\sum_{g \in F, f\ne g}  \calL^n( E^\dz(f)\cap  E^\dz(g)) &= \sum_{f \in F}\sum_{i =1}^{\log (1/\dz)}\sum_{g \in F_i(f)}  \calL^n( E^\dz(f)\cap  E^\dz(g)) \notag\\
    &\le C\dz^{-5\ez} |F| \sum_{f \in F}\sum_{i=1}^{\log (1/\dz)}2^{-it} \frac{\dz^2}{2^{-(i+1)s}} \notag \\
    & \le  2C\dz^{-5\ez}|F|\dz^2\sum_{f \in F} \sum_{i=1}^{\log (1/\dz)} 2^{(s-t)i} \notag\\
    & \le 2C\dz^{-5\ez}|F|^2 \dz^2 \log (1/\dz)\dz^{t-s}
     \le \frac12 \dz^{-6\ez}\dz^{2-t-s}.
   \end{align}
   where in the second last inequality we note that $s\ge t$ and $\sum_{i=1}^{\log (1/\dz)} 2^{(s-t)i} \le \log (1/\dz) 2^{(s-t)\log (1/\dz) } = \log (1/\dz) \dz^{t-s} $, and in the last inequality we recall $|F| \le \dz^{-t}$ and note $\dz^{-5\ez}|F|^2 \le \dz^{-5\ez-2t}$ and \eqref{4ez}.
   Finally, noticing that
   $$  \sum_{f ,g\in F}\calL^n( E^\dz(f)\cap  E^\dz(g)) = \sum_{f \in F} \calL^n( E^\dz(f))  + \sum_{f \in F}\sum_{g \in F, g \ne f}  \calL^n(E^\dz(f)\cap  E^\dz(g))  $$
   and
   $$  \sum_{f \in F} \calL^n( E^\dz(f)) \le|F|  \dz^n M \le \dz^{2-s-t} \le \frac12 \dz^{-6\ez}\dz^{2-s-t},  $$
   we conclude \eqref{e2} as desired.
\end{proof}


We prove Theorem \ref{dimf} using Theorem \ref{thmconfig}.

\begin{proof}[Proof of Theorem \ref{dimf}]

  Fix $0 < t \leq s \leq 1$
and moreover, fix $t' \in [t/2,t]$ and $t' \leq s' < s$. Since $\mathcal{H}^{t'}_{\infty}(F) > 0$, there exists $\alpha = \alpha(F,t') >0$ and $F_1 \subset F$ such that $\calH^{t'}_\infty(F_1)> \alpha$,
where
\begin{equation}\label{lbdd11}
  F_1 := \{ f\in F : \calH^{n-2+s'}_\infty(E \cap \mbox{graph}(f))> \alpha\}.
\end{equation}
This follows from the sub-additivity of Hausdorff content.

Next, let $\ez(\bar s,\bar t)>0$ be the constant such that Theorem \ref{thmconfig} holds for $\bar s,\bar t$. Define $\ez:= \min_{\bar t \in [t',t], \bar s \in [s',s]}\{\ez(\bar s,\bar t)\}$ (where we note that $\ez(\bar s,\bar t)>0$ whenever $\bar s,\bar t>0$).
We further choose $k_0 = k_{0}(\alpha,\epsilon) = k_0(F,t',\epsilon) \in \N$ satisfying
\begin{equation}\label{para31}
  \alpha>\sum_{k=k_0}^{\infty} \frac{1}{k^2} \quad \text{and} \quad k_{0}^{2} \leq \frac{2^{\epsilon k_{0}}}{C},
\end{equation}
where $C \geq 1$ is a constant to be determined later. Let $\calU = \{ D(x_i,r_i)\}_{i \in\mathcal{I}}$ be an arbitrary cover of $E$
by dyadic $r_i$-cubes with $r_i \le 2^{-k_0}$ and
$E \cap D(x_i,r_i) \ne \emptyset$ for all $i \in \mathcal{I}$.
For $k \ge k_0$, write
$$ \mathcal{I}_k:= \{i \in \mathcal{I} :  r_i = 2^{-k} \} \quad \mbox{and} \quad E_k:= \{\cup D(x_i,r_i) : i \in \mathcal{I}_k \}.$$
By the pigeonhole principle and \eqref{para31} we deduce that for each $f \in F_1$, there exists $k(f) \ge k_0$ such that
$$\calH^{n-2+s'}_\infty( E \cap \mbox{graph}(f) \cap E_{k(f)})> k(f)^{-2} .$$
Using pigeonhole principle again we obtain that there exists $k_1 \ge k_0$ such that
\begin{equation}\label{lbdd21}
  \calH^{t'}_\infty(F_2)> k_1^{-2}
\end{equation}
 where $ F_2 := \{f \in F_1 : k(f) = k_1\}.$ By the construction of $F_2$, we have
\begin{displaymath}
\calH^{n-2+s'}_\infty(\mbox{graph}(f)\cap F_{k_1})\ge \calH^{n-2+s'}_\infty(E \cap \mbox{graph}(f) \cap E_{k_1})> k_1^{-2}, \qquad f \in F_{2}.
\end{displaymath}
Write $\delta =2^{-k_1}$.
By \eqref{lbdd21} and \cite[Lemma 2.7]{2021arXiv210603338O}, we know that there exists a $\delta$-separated $(\delta,t',Ck_{1}^{2})$-set $P \subset F_2$ satisfying $(k_{1}^{-2}/C)\delta^{-t'} \leq |P| \leq \delta^{-t'}$. Since $P \subset F_{2}$, we have
\begin{equation}\label{lbdd31}
  \calH^{n-2+s'}_\infty(\mbox{graph}(f)\cap E_{k_1})> k_1^{-2}, \qquad f \in P.
\end{equation}
 Applying \cite[Lemma 2.7]{2021arXiv210603338O} again to $\mbox{graph}(f)\cap E_{k_1}$, $f \in P$, we obtain $\delta$-separated $(\delta,n-2+s',Ck_{1}^{2})$-sets $E(f) \subset \mbox{graph}(f)\cap E_{k_1}$ such that
\begin{equation*}
  |E(f)| \equiv M \geq (k_{1}^{-2}/C)\delta^{-(n-2+s')} \stackrel{\eqref{para31}}{\geq} \delta^{\ez -(n-2+s')}, \qquad f \in P.
\end{equation*}
By \eqref{para31}, $P$ is a $(\delta,t',\delta^{-\epsilon})$-set, and each $E(f)$ is a $(\delta,n-2+s',\delta^{-\epsilon})$-set.
Therefore,
$$ \Omega:= \{(f,x) : f \in P \text{ and } x \in E(f)\} $$
is a $(\delta,s',t',\delta^{-\epsilon},M)_{n-1}$-configuration. Recall that $\epsilon \leq \epsilon(t')$ by the definition of $\epsilon$. Letting
$$ \calE:= \bigcup_{f \in P} E(f)  $$
and applying Theorem \ref{thmconfig}, we deduce that $|\mathcal{E}|_{\delta} \geq  \delta^{16\ez - (n-2)-s'-t'}$.

Since $E(f) \subset E_{k_1}$ for each $f \in P$, we have $\calE \subset E_{k_1}$, which implies
 $$|\mathcal{I}_{k_1}| = |E_{k_1}|_\delta \geq  |\mathcal{E}|_{\delta} \geq \delta^{16\ez - (n-2)-s'-t'}.$$
 Then
$$ \sum_{i \in \mathcal{I}} r_i^{  (n-2)+s'+t'-16\ez} \ge \sum_{i \in \mathcal{I}_{k_1}} r_i^{  (n-2)+s'+t'-16\ez} = \delta^{  (n-2)+s'+t'-16\ez}|\mathcal{I}_{k_1}| \geq 1. $$
As the covering was arbitrary, we infer that $\Hd E \ge   (n-2)+s'+t'-16\ez$.
Sending $s' \nearrow s$, $t' \nearrow t$, and $\ez \searrow 0$, we arrive at the desired result.
\end{proof}

\section{Proof of Theorem \ref{mainproj}}

We prove Theorem \ref{mainproj2} and Theorem \ref{mainproj} in this section. We first use Theorem \ref{mainproj2} to show Theorem \ref{mainproj} by verifying Theorem \ref{mainproj} is a special case of Theorem \ref{mainproj2}. To this end, we recall the definition of principle curvature.

\begin{definition}[Principal Curvature]
  Let $M$ be an $n$-dimensional $C^2$-manifold and $\Sigma \subset M$ be an $m$-dimensional $C^2$-manifold ($m < n$). Let $\nu \in C^2(TM)$ be a normal vector field, i.e. for each $x \in \Sigma$, $\nu(x) \perp T_x\Sigma$.
  Then the $m$ real eigenvalues $\kappa_1 , \cdots \kappa_m$ of $II_x(\nu)$
  are called {\em principal curvatures} of $\Sigma$ at $x$ (with respect to $\nu$) and the corresponding eigenvectors are called {\em principal directions} where $II_x(\nu)$ is the second fundamental form (of $\nu$), i.e.
  $$ II_x(\xi,\eta):= -\langle  \nabla_\xi \nu, \eta\rangle \quad \xi,\eta \in T_x\Sigma  $$
\end{definition}

We relate the ``non-degenerate'' condition for surfaces to principal curvatures.

\begin{lemma} \label{equiv1}
   Assume $\Sigma \subset \R^{n+1}$ satisfies Definition \ref{nond1}(i). Then $\Sigma$ satisfies Definition \ref{nond1}(ii) if and only if all principal curvatures $\kappa_1 , \cdots \kappa_{n-1}$ are not vanishing and have same sign for all $x \in \Sigma$.
\end{lemma}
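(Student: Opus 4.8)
The plan is to connect the second directional derivatives $\langle \ddot\gz(0),\nu(x)\rangle$ appearing in Definition \ref{nond1}(ii) directly to the second fundamental form $II_x$, and then argue via the sign of a quadratic form. First I would fix $x\in\Sigma$ and take any $C^2$ curve $\gz:[-1,1]\to\Sigma$ with $\gz(0)=x$ and $\dot\gz(t)\ne0$; write $\xi:=\dot\gz(0)\in T_x\Sigma\setminus\{0\}$. Differentiating the identity $\langle\dot\gz(t),\nu(\gz(t))\rangle=0$ (which holds because $\dot\gz(t)\in T_{\gz(t)}\Sigma$ and $\nu$ is normal) at $t=0$ gives
\begin{equation*}
 \langle\ddot\gz(0),\nu(x)\rangle + \langle\dot\gz(0),\tfrac{d}{dt}\big|_{0}\nu(\gz(t))\rangle = 0,
\end{equation*}
i.e. $\langle\ddot\gz(0),\nu(x)\rangle = -\langle\xi,\nabla_\xi\nu\rangle = II_x(\xi,\xi)$. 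Thus the quantity in Definition \ref{nond1}(ii) depends on $\gz$ only through $\xi=\dot\gz(0)$, and equals the value of the quadratic form associated to $II_x$ at $\xi$. Conversely, every nonzero $\xi\in T_x\Sigma$ arises as $\dot\gz(0)$ for some admissible curve (e.g. a short geodesic or a $C^2$ coordinate curve of $\Sigma$ through $x$ in direction $\xi$), so condition (ii) at $x$ is \emph{equivalent} to: $II_x(\xi,\xi)\,II_x(\eta,\eta)>0$ for all nonzero $\xi,\eta\in T_x\Sigma$.

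Next I would translate this into an eigenvalue statement. The form $II_x$ is symmetric bilinear on the $(n-1)$-dimensional space $T_x\Sigma$, so it is diagonalizable with real eigenvalues $\kz_1(x),\dots,\kz_{n-1}(x)$ (the principal curvatures) in an orthonormal eigenbasis $e_1,\dots,e_{n-1}$; then $II_x(\xi,\xi)=\sum_i \kz_i(x)\,\langle\xi,e_i\rangle^2$. If all $\kz_i(x)$ are nonzero with the same sign $\sigma\in\{\pm1\}$, then $\sigma\,II_x(\xi,\xi)=\sum_i |\kz_i(x)|\langle\xi,e_i\rangle^2 \ge \min_i|\kz_i(x)|\,|\xi|^2>0$ for every $\xi\ne0$, so $II_x(\xi,\xi)$ has constant sign $\sigma$ and (ii) holds. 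For the converse I argue by contraposition: if some $\kz_j(x)=0$, then $II_x(e_j,e_j)=0$, violating (ii) (the product with any other term is $0$, not $>0$); and if two eigenvalues have opposite signs, say $\kz_j(x)>0>\kz_k(x)$, then $II_x(e_j,e_j)>0$ while $II_x(e_k,e_k)<0$, so their product is negative, again violating (ii). Finally, "(ii) holds at every $x\in\Sigma$" is exactly "for every $x$, all $\kz_i(x)$ are nonzero of the same sign", which is the asserted equivalence.

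I do not expect a serious obstacle here; the only points needing a little care are (a) checking that $\nu$ being a $C^2$ vector field along $\gz$ makes $t\mapsto\nu(\gz(t))$ differentiable so the product-rule computation of $\langle\ddot\gz(0),\nu(x)\rangle=II_x(\xi,\xi)$ is legitimate, and (b) making sure, in the "only if" direction, that one really can realize an arbitrary nonzero tangent vector $\xi$ as $\dot\gz(0)$ of a curve \emph{with} $\dot\gz(t)\ne0$ on all of $[-1,1]$ — this is immediate since such curves are open dense and in particular exist through any $x$ in any direction, e.g. by composing a straight-line segment in a $C^2$ chart of $\Sigma$ with the chart map and restricting to a small enough parameter interval (then rescaling back to $[-1,1]$). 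With those remarks in place the proof is the direct computation above followed by the linear-algebra dichotomy on the sign of a symmetric quadratic form.
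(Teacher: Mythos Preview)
Your proposal is correct and follows essentially the same approach as the paper: both identify $\langle\ddot\gz(0),\nu(x)\rangle$ with $II_x(\dot\gz(0),\dot\gz(0))$, diagonalize $II_x$ in principal directions, and reduce (ii) to the statement that the quadratic form $\xi\mapsto\sum_i\kz_i\langle\xi,e_i\rangle^2$ has fixed sign on nonzero vectors. Your write-up is in fact more explicit than the paper's (which leaves the product-rule identity $\langle\ddot\gz(0),\nu\rangle=-\langle\nabla_{\dot\gz(0)}\nu,\dot\gz(0)\rangle$ and the realizability of tangent vectors by admissible curves implicit).
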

\begin{proof}
  Fix any $x \in \Sigma$. Let $\{e_1,\cdots,e_{n-1}\} \subset T_x\Sigma$ be the principal directions and $\gz_i : [-1,1] \to \Sigma$ such that $\gz_i(0)=x$ and $\dot \gz_i(0) = e_i$, $i=1,\cdots n-1$. Thus by definition, the $n-1$ principal curvatures are $\kz_i = -\langle  \nabla_{e_i} \nu, e_i\rangle $,
  $i = 1, \cdots, n-1$.

   If $\Sigma$ satisfies Definition \ref{nond1}(ii), then we know
   $$\kz_i\kz_j=\langle  \nabla_{e_i} \nu, e_i\rangle \langle  \nabla_{e_i} \nu, e_i\rangle= \langle  \nabla_{\dot \gz_i(0)} \nu, \dot \gz_i(0)\rangle \langle  \nabla_{\dot \gz_j(0)} \nu, \dot \gz_j(0)\rangle >0 \quad 1 \le i ,j \le n-1.   $$
   Hence all $\kz_i$ are not vanishing and have same sign.

   Conversely, let $\gz_k : [-1,1] \to \Sigma$ such that $\gz_k(0)=x$ and $\dot \gz_k(t) \ne 0$ for all $t \in [-1,1]$, $k=1,2$.
   Write $\dot \gz_k(t)= \sum_{i=1}^{n-1} a_k^i(t) e_i $, $k=1,2$.
   We have
   \begin{align*}
     \langle  \nabla_{\dot \gz_1(0)} \nu, \dot \gz_1(0)\rangle \langle  \nabla_{\dot \gz_2(0)} \nu, \dot \gz_2(0)\rangle &  = [\sum_{i=1}^{n-1}(a_1^i)^2\langle  \nabla_{e_i} \nu, e_i\rangle] [\sum_{i=1}^{n-1}(a_2^i)^2\langle  \nabla_{e_i} \nu, e_j\rangle]\\
      & =\sum_{i,j=1}^{n-1}(a_1^i)^2(a_2^j)^2\kz_i\kz_j >0,
   \end{align*}
  That is, $\Sigma$ satisfies Definition \ref{nond1}(ii). The proof is complete.
\end{proof}

We are able to show Theorem \ref{mainproj}.

\begin{proof}[Proof of Theorem \ref{mainproj}]
   Since $\Sigma \subset S^n$, for all $x \in \Sigma$, we know there exists a unique $\nu(x)\subset T_x S^n$ (up to $\pm 1$) such that $\nu(x) \perp x$ and $\nu(x) \perp T_x \Sigma$. Also noting that $x \perp T_x S^n$, we know span$\{x,T_x \Sigma, \nu(x) \}=\R^{n+1}$. Thus $\Sigma$ satisfies Definition \ref{nond1}(i).
   By Remark \ref{sec}, we deduce that $\Sigma$ also Definition \ref{nond1}(ii). Thus Theorem \ref{mainproj2} implies Theorem \ref{mainproj}.
\end{proof}

We are left to show Theorem \ref{mainproj2}. We make some further reductions.

\begin{remark} \label{barkz}
 By Lemma \ref{equiv1}, if $\Sigma$ satisfies Definition \ref{nond1}(ii), we can without loss of generality assume all the principal curvatures $\kappa_i(x)$ are positive for all $x \in \Sigma$, $i=1, \cdots,n-1$. Finally, by the $C^2$-regularity of $\Sigma$, we know the $n-1$ principal curvatures $\kappa_i \in C^0(\Sigma, \mathbb{R_+})$. Thus there exists $\kappa \ge 1$ such that
 \begin{equation}\label{curcon}
   0<\kappa^{-1} \le \min_{x \in \Sigma}\{\kappa_1(x), \cdots, \kappa_{n-1}(x)\} \le \max_{x \in \Sigma}\{\kappa_1(x), \cdots, \kappa_{n-1}(x)\} \le \kappa < \infty.
 \end{equation}
\end{remark}


\begin{lemma} \label{cineprop}
  Let $Z$ be a subset in $B(0,1)\subset \mathbb{R}^{n+1}$ and $\Sigma : [0,1]^{n-1} \to \R^{n+1}$ be as in Theorem \ref{mainproj2}. Then  $F:=\{f_{z} \in C^2( [0,1]^{n-1}) : f_{z}(x):=z \cdot \Sigma(x) \}_{z \in Z}$ is a cinematic family with the same doubling constant as $\mathbb{R}^{n+1}$. In particular, $F$ is biLipschitz homeomorphic to $Z$.
\end{lemma}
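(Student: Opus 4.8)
The plan is to exploit the linearity of $z\mapsto f_{z}$ and reduce all four clauses of Definition \ref{cine} to a single quantitative non-degeneracy estimate. Note first that $f_{z}-f_{w}=f_{z-w}$ and that $v\mapsto f_{v}$ is a linear map $\R^{n+1}\to C^{2}([0,1]^{n-1})$, so it suffices to control $f_{v}$ for an arbitrary $v\in\R^{n+1}$; since $Z\subset B(0,1)$ we always have $|v|=|z-w|\le 2$. The Cauchy--Schwarz bound $|f_{v}(x)|=|v\cdot\Sigma(x)|$ together with $\Sigma\in C^{2}([0,1]^{n-1},\R^{n+1})$ immediately gives $\|f_{v}\|_{C^{2}([0,1]^{n-1})}\le C_{\Sigma}|v|$ for a constant $C_{\Sigma}$ depending only on $\|\Sigma\|_{C^{2}}$; in particular $\diam F\le 2C_{\Sigma}$, which is clause (1) once $K\ge 2C_{\Sigma}$.

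The heart of the matter is the lower bound: there is $c_{\Sigma}>0$, depending only on $\Sigma$, such that
\begin{equation*}
  \inf_{x\in[0,1]^{n-1}}\bigl\{\,|f_{v}(x)|+|\nabla f_{v}(x)|+|\nabla_{\xi}\nabla_{\xi}f_{v}(x)|\,\bigr\}\ \ge\ c_{\Sigma}\,|v|\qquad\text{for all }v\in\R^{n+1},\ \xi\in S^{n-2}.
\end{equation*}
For fixed $x,\xi$ one computes $f_{v}(x)=v\cdot\Sigma(x)$, $\nabla f_{v}(x)=(v\cdot\partial_{1}\Sigma(x),\dots,v\cdot\partial_{n-1}\Sigma(x))$ and $\nabla_{\xi}\nabla_{\xi}f_{v}(x)=v\cdot d^{2}\Sigma_{x}(\xi,\xi)$, where $d^{2}\Sigma_{x}(\xi,\xi)=\sum_{i,j}\xi_{i}\xi_{j}\,\partial_{i}\partial_{j}\Sigma(x)=\ddot\gamma(0)$ for the curve $\gamma(t):=\Sigma(x+t\xi)$ in $\Sigma$. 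Hence the bracket dominates the Euclidean norm of $M_{x,\xi}v$, where $M_{x,\xi}\colon\R^{n+1}\to\R^{n+1}$ sends $v$ to $\bigl(v\cdot\Sigma(x),\,v\cdot\partial_{1}\Sigma(x),\dots,v\cdot\partial_{n-1}\Sigma(x),\,v\cdot d^{2}\Sigma_{x}(\xi,\xi)\bigr)$, and it suffices to show $\|M_{x,\xi}^{-1}\|$ is finite and bounded uniformly in $(x,\xi)\in[0,1]^{n-1}\times S^{n-2}$. By compactness and the continuity of $\Sigma,\partial_i\Sigma,\partial_i\partial_j\Sigma$, this reduces to showing that $\Sigma(x),\partial_{1}\Sigma(x),\dots,\partial_{n-1}\Sigma(x),d^{2}\Sigma_{x}(\xi,\xi)$ is a basis of $\R^{n+1}$ for every $(x,\xi)$. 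This is exactly where the ``non-degenerate'' condition is used: as $\Sigma$ is a $C^{2}$ diffeomorphism, $\{\partial_{i}\Sigma(x)\}_{i=1}^{n-1}$ is a basis of $T_{\Sigma(x)}\Sigma$; by Definition \ref{nond1}(i), $\nu(\Sigma(x))$ is orthogonal to both $\Sigma(x)$ and $T_{\Sigma(x)}\Sigma$, and comparing dimensions ($1+(n-1)=n=\dim\nu(\Sigma(x))^{\perp}$) we get $\nu(\Sigma(x))^{\perp}=\operatorname{span}\{\Sigma(x)\}\oplus T_{\Sigma(x)}\Sigma$, with basis $\{\Sigma(x),\partial_{1}\Sigma(x),\dots,\partial_{n-1}\Sigma(x)\}$; finally Definition \ref{nond1}(ii), applied with $\gamma_{1}=\gamma_{2}=\gamma$, gives $\langle d^{2}\Sigma_{x}(\xi,\xi),\nu(\Sigma(x))\rangle\neq 0$, i.e.\ $d^{2}\Sigma_{x}(\xi,\xi)\notin\nu(\Sigma(x))^{\perp}$, so adjoining it yields a basis. (Remark \ref{barkz} may be invoked to make the uniform lower bound on $|\langle d^{2}\Sigma_{x}(\xi,\xi),\nu\rangle|$ explicit.)

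Granting this, the remaining verifications are routine. Clause (3) follows since $c_{\Sigma}|v|\ge(c_{\Sigma}/C_{\Sigma})\|f_{v}\|_{C^{2}}=(c_{\Sigma}/C_{\Sigma})\|f-g\|_{C^{2}}$ for $v=z-w$; take $K\ge C_{\Sigma}/c_{\Sigma}$. Evaluating the displayed bracket at the minimising $x$ and bounding each of its three terms by $\|f_{v}\|_{C^{2}}$ also gives $\|f_{v}\|_{C^{2}}\ge\tfrac{1}{3}c_{\Sigma}|v|$; combined with the upper bound this shows $v\mapsto f_{v}$ is biLipschitz onto $F$, hence $F$ is biLipschitz homeomorphic to $Z$ (the ``in particular'' assertion), and, being isometric to $(Z,\|\cdot\|_{*})$ for a norm $\|\cdot\|_{*}$ on $\R^{n+1}$ equivalent to the Euclidean one, $F$ is doubling with doubling constant controlled by that of $\R^{n+1}$, which is clause (2). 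For clause (4), $\nabla_{\xi}\nabla_{\xi}(f-g)(x)-\nabla_{\xi}\nabla_{\xi}(f-g)(y)=(z-w)\cdot\bigl(d^{2}\Sigma_{x}(\xi,\xi)-d^{2}\Sigma_{y}(\xi,\xi)\bigr)$, and since $(x,\xi)\mapsto d^{2}\Sigma_{x}(\xi,\xi)$ is uniformly continuous on $[0,1]^{n-1}\times S^{n-2}$ with some increasing modulus $\omega$, $\omega(0^{+})=0$, the left side is $\le 2\,\omega(|x-y|)$; one then picks an increasing $\alpha\in C^{0}([0,1],[0,\infty))$ with $\alpha(0)=0$, $2\omega(\alpha(\eta))\le\eta$ for small $\eta$, and shrinks it further so that $0<\alpha(s)\le K^{-1}s$. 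Taking $K$ to be the largest of the finitely many constants produced above completes the argument. The only genuinely substantive step is the basis/quantitative-invertibility claim in the second paragraph, where Definition \ref{nond1}(i)--(ii) enters essentially; everything else is soft compactness together with the $C^{2}$ regularity of $\Sigma$.
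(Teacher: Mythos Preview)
Your argument is correct and complete. The overall architecture---linearity of $z\mapsto f_{z}$, clauses (1), (2), (4) handled softly---matches the paper, but your treatment of the key clause (3) follows a genuinely different route.

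The paper fixes $x$ and $\xi$, passes to the point $w=\Sigma(x)$, introduces the orthonormal frame $\{e_{0}=w/|w|,\,e_{1},\dots,e_{n-1},\,\nu\}$ of $\R^{n+1}$ coming from Definition~\ref{nond1}(i), and runs an explicit three-case analysis according to which coordinate of $y-z$ in this frame is large: if $|(y-z)\cdot e_{0}|\ge M|y-z|$ one controls $|h(x)|$; if $|(y-z)\cdot e_{i}|\ge M|y-z|$ for some $1\le i\le n-1$ one controls $|\nabla h(x)|$; otherwise $|(y-z)\cdot\nu|\ge\tfrac12|y-z|$, and the principal-curvature bound~\eqref{curcon} is invoked by hand to control $|\nabla_{\xi}\nabla_{\xi}h(x)|$. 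You instead package $(f_{v}(x),\nabla f_{v}(x),\nabla_{\xi}\nabla_{\xi}f_{v}(x))$ as $M_{x,\xi}v$ for a single $(n{+}1)\times(n{+}1)$ matrix, reduce (3) to the invertibility of $M_{x,\xi}$, and read off that invertibility directly from Definition~\ref{nond1}(i)--(ii) as a basis statement; uniformity in $(x,\xi)$ then comes for free from compactness and continuity. Your route is shorter and more conceptual; the paper's case analysis, by contrast, produces an explicit cinematic constant in terms of the curvature parameter $\kappa$ of Remark~\ref{barkz} and the $C^{2}$ data of $\Sigma$, which your compactness argument does not. Since the lemma does not demand explicit constants, nothing is lost.
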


\begin{proof}
First we show (1) in Definition \ref{cine}.
For any $x \in \Sigma$,
\begin{equation}\label{cine1}
  \sup_{f_{z} \in F }\|f_{z}\|_{C^2([0,1]^{n-1})} \le \|\Sigma\|_{C^2([0,1]^{n-1})} \max_{z \in Z}|z| \lesssim 1
\end{equation}
 where the implicit constant only depends on $\Sigma$. Thus (1) holds. Also for (4), the modulus of continuity $\az$ can be chosen as the one of $|\nabla^2 \Sigma|$.

Fix $y,z \in Z$. We show (3) in Definition \ref{cine}. Similar as \eqref{cine1}, we have
\begin{equation}\label{dou1}
  \|f_y-f_{z}\|_{C^2([0,1]^{n-1})} \lesssim |y-z|.
\end{equation}
Write
$h:= f_y-f_{z}$. We prove (3) by showing
\begin{equation}\label{cine2}
  \inf_{x \in [0,1]^{n-1}, \xi \in S^{n-2}} \{ |h(x)| + |\nabla h(x)| + |\nabla_\xi \nabla_\xi h (x)| \} \gtrsim  |y-z|.
\end{equation}
To apply the curvature condition \eqref{curcon},
define $\tilde h  \in C^2(\Sigma)$ by $\tilde h(w):= (y-z) \cdot w$, $w \in \Sigma$. Thus $h=\tilde h \circ \Sigma$. For each $\xi \in S^{n-2}$, let $\xi_\ast:= \nabla \Sigma(x) \cdot \xi \in T_{\Sigma(x)} \Sigma$. We know for all $x \in [0,1]^{n-1}$ and $\xi \in S^{n-2}$,
\begin{equation}\label{rel3}
  \langle\nabla h (x), \xi \rangle = \langle\nabla \tilde h (\Sigma(x)), \xi_\ast \rangle
\end{equation}
and
\begin{equation}\label{rel2}
  \nabla_\xi \nabla_\xi h (x) = \nabla_{\xi_\ast} \nabla_{\xi_\ast}\tilde h (\Sigma(x)) + \langle\nabla\tilde h (\Sigma(x)), \nabla^2\Sigma(x) (\xi,\xi)  \rangle
\end{equation}
where $\nabla^2 \Sigma(x)$ is a linear map from $T_x[0,1]^{n-1} \times T_x[0,1]^{n-1} $ to $T_w \Sigma$.
In particular, noting that $|\nabla \Sigma|$ is nowhere vanishing (since $\Sigma$ is a diffeomorphism), \eqref{rel3} implies that
\begin{equation}\label{rel4}
  |\nabla h (x)| \sim  |\nabla \tilde h (\Sigma(x))|, \quad x \in [0,1]^{n-1}
\end{equation}
where the implicit constant depends on $\max_{x \in [0,1]^{n-1}}\{|\nabla \Sigma(x)|\}$ and $\min_{x \in [0,1]^{n-1}}\{|\nabla \Sigma(x)|\}$.

Fix $x \in [0,1]^{n-1}$ and $\xi \in S^{n-2}$ in the following and let $w : =\Sigma(x)$. Also, let $ \{e_1, \cdots , e_{n-1} \}$  $\subset T \Sigma= \Sigma \times \R^{n-1}$ be an orthonormal basis on each point in $\Sigma$ and $e_1(w) = \xi_\ast$.
Noting that
$\tilde h (w)= (y-z) \cdot w$, we have
$\nabla_{e_i} \tilde h (w)= (y-z) \cdot \gamma_i '(0)$ where $\gamma_i :[-\epsilon,\epsilon] \to \Sigma$ is a $C^2$-curve
parametrized by arc-length such that $\gamma_i(0)=w$ and $\gamma_i'(0)=e_i$.
Thus
\begin{equation}\label{gradh}
  \nabla \tilde h (w)= \sum_{i=1}^{n-1} [(y-z) \cdot \gamma_i '(0)]e_i = \sum_{i=1}^{n-1} [(y-z) \cdot e_i]e_i.
\end{equation}
 Since $\Sigma$ satisfies Definition \ref{nond1}(i), we know there exists a $C^2$ vector field $\nu$ such that
\begin{equation}\label{decomp10}
  \nu \perp w, \ \nu \perp T_{w} \Sigma \mbox { and } \mbox{ span } \{w , T_{w} \Sigma, \nu\} = \mathbb{R}^{n+1}.
\end{equation}
Furthermore, we note that \eqref{decomp1} implies $w \ne 0$ for all $w \in \Sigma$, i.e. $\Sigma$ does not contain the origin.
Denoting by $e_0:= \frac{w}{|w|}= \frac{\Sigma(x)}{|\Sigma(x)|}$, we have
\begin{equation}\label{decomp1}
  \nu \perp e_0, \ \nu \perp T_{w} \Sigma \mbox { and } \mbox{ span } \{e_0 , T_{w} \Sigma, \nu\} = \mathbb{R}^{n+1}.
\end{equation}
Thus $y-z$ can be uniquely written as
\begin{equation*}
  y-z =  \sum_{i=0}^{n-1}[(y-z) \cdot e_i] e_i + [(y-z) \cdot \nu] \nu.
\end{equation*}
Let $I_\Sigma$ be the first fundamental form of $\Sigma$ and
$|I_\Sigma| = \sum_{i,j,k=1}^{n-1} |\Gamma_{ij}^k|$
where $\Gamma_{ij}^k= \langle \nabla_{e_j}e_i,e_k \rangle$ are the Christoffel symbols, $1 \le i \le n-1$.
Define
\begin{equation}\label{m}
  m:= \max_{p \in \Sigma} \{\langle e_0,\nabla_{e_1}e_1 \rangle|_{p}\}.
\end{equation}
Recall $\kz$ in \eqref{curcon} in  Remark \ref{barkz} and denote
$$   M : =  \frac{\min\{\kz^{-1},  \min_{p \in \Sigma}\{|p|\},1\}}{10n \max\{|I_\Sigma|,m , \|\Sigma\|_{C^2([0,1]^{n-1},\R^{n+1})},1 \}} \in (0,\frac{1}{10n}]$$
where the fact $M>0$ is because \eqref{decomp1} implies $\min_{p \in \Sigma}\{|p|\} >0$.
In addition, we notice that $|p| \ge M$ for all $p \in \Sigma$.

We consider the following three cases separately.

\medskip
\emph{Case 1.}
If $|(y-z) \cdot e_0| \ge M |y-z|$, then by $\tilde h (w)= (y-z) \cdot (|w|e_0)$,
we deduce that
$$ |h(x)| = |\tilde h (w)| = |w| |(y-z) \cdot e_0| \ge |w|M|y-z| \ge M^2|y-z| \sim |y-z|.$$
Thus \eqref{cine2} holds.

\medskip
\emph{Case 2.}
If $|(y-z) \cdot e_i| \ge M |y-z|$ for some $i=1,\cdots, n-1$, then by \eqref{rel4} and \eqref{gradh},
$$ |\nabla h(x)| \sim |\nabla \tilde h(w)| \ge  |(y-z) \cdot e_i| \ge M |y-z| \gtrsim |y-z|,$$
which gives \eqref{cine2}.

\medskip
\emph{Case 3.}
$|(y-z) \cdot e_i| < M |y-z| \le (10n)^{-1}|y-z| $ for all $i=0,\cdots, n-1$, which implies
$|(y-z) \cdot \nu| \ge \frac12 |y-z|$. By direct computation, we have
\begin{align*}
  \nabla_{e_i}\nabla_{e_i} \tilde h(w) & = \nabla_{e_i}[(y-z)\cdot \gamma'_i(0)] = (y-z)\cdot \nabla_{e_i}e_i\\
   & = \sum_{j=0}^{n-1}[(y-z) \cdot e_j]\langle e_j,\nabla_{e_i}e_i \rangle + [(y-z) \cdot \nu]\langle \nu,\nabla_{e_i}e_i \rangle.
\end{align*}
Recalling that \eqref{m} and noting that $M \le \frac{\kz^{-1}}{10n m}$, we know
\begin{equation}\label{extnormal1}
  |\langle e_0,\nabla_{e_1}e_1 \rangle|M \le m \frac{\kz^{-1}}{10n m} \le \frac{\kz^{-1}}{10n}.
\end{equation}
Moreover, for $j=1,\cdots, n-1$, noting that $M \le \frac{\kz^{-1}}{10n |I_\Sigma|}$, we have
\begin{equation}\label{extnormal3}
   |\langle e_j,\nabla_{e_i}e_i \rangle| M =  |\Gamma_{ji}^i| M \le   \frac{\kz^{-1}}{10n}.
\end{equation}
By the definition of of principal curvature of $\Sigma$ as well as recalling \eqref{curcon},
\begin{equation}\label{extnormal2}
  \langle \nu ,\nabla_{e_1}e_1 \rangle = -\langle \nabla_{e_1}\nu,e_1 \rangle \ge \kappa^{-1} .
\end{equation}
As a result, recalling $\xi_\ast =e_1(w)$ and combining \eqref{extnormal1}, \eqref{extnormal3} and \eqref{extnormal2}, we obtain
\begin{align} \label{unifconv}
  |\nabla_{\xi_\ast}\nabla_{\xi_\ast} \tilde h (w)| & = \left|\sum_{j=0}^{n-1}[(y-z) \cdot e_j]\langle e_j,\nabla_{\xi_\ast}\xi_\ast \rangle + [(y-z) \cdot \nu]\langle \nu,\nabla_{\xi_\ast}\xi_\ast \rangle \right| \notag\\
   & \ge|[(y-z) \cdot \nu]\langle \nu,\nabla_{\xi_\ast}\xi_\ast \rangle| - \left|\sum_{j=0}^{n-1}[(y-z) \cdot e_j]\langle e_j,\nabla_{\xi_\ast}\xi_\ast \rangle \right| \notag\\
   & \ge \frac12|y-z|\kz^{-1} - |y-z|M \sum_{j=0}^{n-1} |\langle e_j,\nabla_{\xi_\ast}\xi_\ast \rangle| \notag\\
   & \ge \frac12|y-z| \kz^{-1} -\frac1{10}|y-z|\kz^{-1} \notag\\
   & \ge \frac2{5\kz} |y-z|.
\end{align}
Recalling that $\nabla^2 \Sigma(x)$ is a linear map from $T_x[0,1]^{n-1} \times T_x[0,1]^{n-1} $ to $T_w \Sigma$, we know $\nabla^2\Sigma(x) (\xi,\xi) \in T_w \Sigma$ and thus $\nabla^2\Sigma(x) (\xi,\xi) \in$ span$\{e_1, \cdots,e_{n-1}\}$. Combining \eqref{gradh} and the standing assumption in \emph{Case 3}, we deduce that
\begin{align*}
  |\langle\nabla\tilde h (\Sigma(x)), \nabla^2\Sigma(x) (\xi,\xi)  \rangle | & \le |\nabla^2\Sigma(x)| \sum_{i=1}^{n-1}|(y-z) \cdot e_i|  \\
   & < \|\Sigma\|_{C^2([0,1]^{n-1},\R^{n+1})} nM |y-z| \\
   & \le  \frac{1}{10\kz}|y-z|.
\end{align*}
Consequently, using \eqref{rel2}, we arrive at
$$ |\nabla_\xi \nabla_\xi h (x) | \ge |\nabla_{\xi_\ast}\nabla_{\xi_\ast} \tilde h (w)| -  |\langle\nabla\tilde h (\Sigma(x)), \nabla^2\Sigma(x) (\xi,\xi)  \rangle | \gtrsim  |y-z|, $$
which gives \eqref{cine2}.
Combining \emph{Cases 1-3}, (3) in Definition \ref{cine} holds.

Finally, we show (2) in Definition \ref{cine}. Indeed, combining \eqref{dou1} and \eqref{cine2}, we deduce that
\begin{equation*}
 |y-z| \lesssim \inf_{x \in [0,1]^{n-1}, \xi \in S^{n-2}} \{ |h(x)| + |\nabla h(x)| + |\nabla_\xi \nabla_\xi h (x)| \} \le  \|f_y-f_{z}\|_{C^2([0,1]^{n-1})} \lesssim |y-z|,
\end{equation*}
which implies that (2) in Definition \ref{cine} holds and
$F$ is a doubling space with the same doubling constant as $\mathbb{R}^{n+1}$. In particular, $F$ is biLipschitz homeomorphic to $Z$.

The proof is complete.
\end{proof}

We are in the position to show Theorem \ref{mainproj2}. The proof is similar to \cite[Section 3.2]{2021arXiv210603338O}.

\begin{proof}[Proof of Theorem \ref{mainproj2}]
  Let $\Hd Z =t$. Without loss of generality, we can assume $t \in (0,1]$. It suffices to show \eqref{Kauf} for all $0<s <t$.
  Consider the family
  $F \subset C^{2}([0,1]^{n-1})$,
  $$F:=\{ f_z=\langle \Sigma(x),z \rangle  \}_{z\in Z}.$$
  Applying Lemma \ref{cineprop}, we know $F$ is a cinematic family with cinematic constant $K$, doubling constant $D$ and modulus of continuity $\az$ only
  depending on $\Sigma$ and $n$.

  Define $L:= \max_{f_z \in F}\{\|f_z\|_{L^\fz([0,1]^{n-1})}\} \le K$. Then one can check that the family
  $$ \bar F : = \{\frac1{2L}(f_z + L) : f_z \in F\} \subset C^{2}([0,1]^{n-1}, [0,1])$$
  is a cinematic family with  cinematic constant $\bar K \sim K$, doubling constant $\bar D \sim D$ and modulus of continuity $\bar \az \sim \az$. Thus without loss of generality, we can assume $ F  \subset C^{2}([0,1]^{n-1}, [0,1])$.


%

  We prove by contradiction. Assume there exists $0<s< t=\Hd Z$ and $X_s \subset [0,1]^{n-1}$ such that
  $$ \Hd X_s > n-2+s $$
  and
  \begin{equation}\label{sdim}
    \Hd(\Sigma(x) \cdot Z ) \le s, \quad x \in X_s.
  \end{equation}
   We further formulate this statement in a discrete way.
  Fix $s'<t$ such that $n-2+s'\in (n-2+s, \Hd X_s)$.
  Then $H: = \calH^{s'}_\infty(Z) >0$.
  Moreover, we have
  $$\calH^{n-2+s'}_\infty( X_s ) >0 $$
  and
  $$ \calH^{s'}(\Sigma(x) \cdot Z  ) =0, \quad x \in X_s.   $$
  We apply Theorem \ref{thmconfig} with $s=t=s'$. Then we get a parameter $\ez(s',n,K')$ and $\dz_0=\dz_0(\ez)$ such that Theorem \ref{thmconfig} holds for all $\dz< \dz_0$. Moreover, by decreasing $\dz_0$, we fix $\ez>0$ such that
  \begin{equation}\label{choiceez}
   \ez < \min\{\ez(s',n,K'), \frac{s'-s}{48} \}, \mbox{ and } \dz_0^\ez <  \min\{[\log(1/\dz_0)]^{-2},C^{-1}\}
  \end{equation}
  where $C>1$ is a constant to be determined.

   Fix $k_0 \in\N$ such that $2^{-k_0} < \dz_0$. For each $x \in X_s$, let $\calU_x = \{ I_x^i\}_{i \in\mathcal{I}_x}$ be an arbitrary cover of $\Sigma(x) \cdot Z $
by dyadic intervals with length $\le 2^{-k_0}$ and
$(\Sigma(x) \cdot Z ) \cap I_x^i \ne \emptyset$ for all $i \in \mathcal{I}_x$.
We know
$$    \sum_{i \in \mathcal{I}_x} | I_x^i|^s \le 1.     $$
Write $\mathcal{I}_x^k:= \{ i \in \mathcal{I}_x : | I_x^i|\in [2^{-k},2^{-(k-1)})\}$ for all $k \ge k_0$ and
\begin{equation}\label{zxk}
  Z_x^k : = \{z \in Z: \langle \Sigma(x),z \rangle \in \bigcup_{i \in \mathcal{I}_x^k}I_x^i \}.
\end{equation}
Thus $\{Z_x^k\}_{k \ge k_0}$ forms a cover of $Z$ for all $x \in X_s$.
Recall $H= \calH^{s'}_\infty(Z) >0$. Thus for each $x \in X_s$, by pigeonhole principle, there exists $k_x \ge k_0$ such that
$$    \calH^{s'}_\infty(Z_x^{k_x}) > k_x^{-2} \mbox{ and } |\mathcal{I}_x^{k_x}|  \overset{\eqref{sdim}}{\le} 2^{sk_x}.  $$
By using a second pigeonhole principle, we can find $k_1 \ge k_0$, and a subset $\bar X_s \subset X_s$ such that
  $$  \calH^{n-2+s'}_\infty(\bar X_s)   > k_1^{-2} \mbox{ and } k_x=k_1 \quad x \in \bar X_s.  $$
Let $\dz: = 2^{-k_1}$.
  By \cite[Lemma 2.7]{2021arXiv210603338O}, we know that there exists a $\delta$-separated $(\delta,n-2+s',C)$-set $Y \subset \bar X_s$ satisfying
  \begin{equation}\label{cardy}
    C^{-1}\delta^{-(n-2+s')} \leq |Y| \leq \delta^{-(n-2+s')}
  \end{equation}
  where $C$ is the constant to be determined in \eqref{choiceez}.
  As a result, we have
  \begin{equation}\label{upb2}
    \calH^{s'}_\infty(Z_x^{k_1}) > k_1^{-2} = [\log(1/\dz)]^{-2}  \mbox{ and } |\mathcal{I}_x^{k_1}|  \le \dz^{-s} \quad x \in Y.
  \end{equation}

  Since $Z_x^{k_1} \subset Z$, again
 by \cite[Lemma 2.7]{2021arXiv210603338O}, we can further find a $(n+1)\delta$-separated $(\delta,s',C[\log(1/\dz)]^{-2})$-set $W_x \subset Z$ satisfying $C^{-1}\delta^{-s'}[\log(1/\dz)]^{2} \leq |W_x| \leq \delta^{-s'}$. Since $W_x \subset [0,1]^{n+1}$ is $(n+1)\delta$-separated,  the family of dyadic $\delta$-cubes $\{Q\}$ in $[0,1]^{n+1}$ such that $Q \cap W_x \ne \emptyset$ has the same cardinality as $W_x$. So we consider $W_x$ as a set of dyadic $\delta$-cubes $[0,1]^{n+1}$ in the following.

 Let $W:= \cup_{x \in Y} W_x$. Thus $|W| \ge |W_x| \ge \dz^\ez\delta^{-s'}$ and $W$ is a family of dyadic $\dz$-cubes $Q$ with
 \begin{equation}\label{cardw}
   \calH^{s'}_\infty(W) = \calH^{s'}(W) \gtrsim \dz^\ez\delta^{-s'}(\diam Q)^s \gtrsim \dz^\ez.
 \end{equation}
Noting that
$$  \sum_{w \in W} |\{x\in Y:  w \in W_x\}| =  \sum_{x \in Y} | W_x|   \ge C^{-1}\delta^{-s'}[\log(1/\dz)]^{2}|Y| \overset{\eqref{choiceez}}{\gtrsim} \dz^\ez\delta^{-s'}|Y|,$$
we infer that  there exists a subset $\bar W \subset W$ with $|\bar W| \ge \frac12 \dz^\ez|W|$ such that
$$  |\{x\in Y:  w \in W_x\}| \ge \frac12 \dz^\ez|Y|, \quad  w \in \bar W . $$
By \eqref{cardw}, we know $\calH^{s'}_\infty(\bar W) \gtrsim \dz^\ez$. Thus we can find a $(\delta,s',\dz^{-3\ez})$-set of $\bar W$ still denoting by $\bar W$ with
 $\dz^{3\ez}\delta^{-s'} \leq |\bar W| \leq \delta^{-s'}$.

 To apply Theorem \ref{thmconfig}, we construct a $(\delta,s',s',\dz^{-3\ez})_{n-1}$-configuration. For each $w \in \bar W$ which is considered as a $\dz$-cube, we can find an element  $z_w \in Z \cap w$ since each $\dz$-cube $w$ has nonempty intersection with $Z$ by construction. And now we consider $\bar W$ as the set $\{z_w\}$ and we still write the element in  $\bar W$ as $w$ in the following.

 Define
 $$   E_w:=\bigcup_{x \in Y} (x, f_w(x)) = \bigcup_{x \in Y} (x, \langle \Sigma(x),w \rangle ) \subset [0,1]^{n} \quad w \in W.$$
 Since $Y$ is a $(\delta,n-2+s',C)$-set in $[0,1]^{n-1}$, $E_w$ is a $(\delta,n-2+s',C)$-set in $[0,1]^{n}$ with $|E_w|=|Y|$.
 Define
 $$ \Omega:= \bigcup_{w \in \bar W} (f_w,E_w) .$$
 Then $\Omega$ is a $(\delta,s',s',\dz^{-3\ez},|Y|)_{n-1}$-configuration.
 Let $\calE := \bigcup_{w \in \bar W} E_w$. Applying Theorem \ref{thmconfig} and using \eqref{cardy}, we know
 \begin{equation}\label{lob1}
   |\calE|_\dz \ge \dz^{48\ez+2-2s'} .
 \end{equation}

On the other hand,
recalling $\bar W \subset W \subset \cup_{x \in Y} W_x \subset \cup_{x \in Y} Z_x^{k_1}$,
we have
$$  \calE \subset \bigcup_{x \in Y} \{x\} \times \langle \Sigma(x),\cup_{x \in Y} Z_x^{k_1} \rangle.  $$
Recalling the definition of $Z_x^{k_1}$ in \eqref{zxk} and combining \eqref{upb2},
we deduce
$$  |\calE|_\dz \le \sum_{x \in Y} |Z_x^{k_1}|_\dz \le |Y|\dz^{-s}\le \dz^{2-s-s'} .  $$
This contradicts to \eqref{lob1}, since $\ez < 48^{-1}(s'-s)$ by our choice of $\ez$ in \eqref{choiceez}. The proof is complete.
\end{proof}

Now, we give a new proof of Theorem \ref{mattila} in $\R^n$ for all $n \ge 3$.

\begin{proof}[Proof of Theorem \ref{mattila} with $n \ge 3$]
  Let $\Hd Z =t$. Without loss of generality, we can assume $t \in (0,1]$. Since we can partition $\Sigma$ into pieces with each piece diffeomorphic to $[0,1]^{n-1}$, we can prove Theorem \ref{mattila} for each piece. Thus we may assume $\Sigma:[0,1]^{n-1} \to S^{n-1}$ is a $C^\fz$ diffeomorphism from $[0,1]^{n-1}$ to $\Sigma([0,1]^{n-1}) \subset S^{n-1}$ with $\|\Sigma\|_{C^1([0,1]^{n-1})} \lesssim 1$.

  Consider the family $F \subset C^{2}([0,1]^{n-1})$, $F:=\{ f_z=\langle \Sigma(x),z \rangle  \}_{z\in Z}$. For the same reason as in the proof of Theorem \ref{mainproj}, without loss of generality, we can assume $ F  \subset C^{2}([0,1]^{n-1}, [0,1])$. 
  If we can show that $F$ is a cinematic family with cinematic constant $K$ and doubling constant $D$ only
  depending on $\Sigma$ and $n$, then using the same proof for Theorem \ref{mainproj}, we can proof Theorem \ref{fundproj} as desired. Hence, it suffices to show the family $F$ enjoys Definition \ref{cine}.

  Indeed, parallel to the proof of Lemma \ref{cineprop}, one could obtain that $F$ satisfies Definition \ref{cine} (1), (2) and (4). We only need to check that $F$ satisfies Definition \ref{cine} (3). The verification is also similar to (and easier than) that  in Lemma \ref{cineprop}.

  Fix $y,z \in Z$. To see (3) in Definition \ref{cine}.
Write
$h:= f_y-f_{z}$. We prove (3) by showing
\begin{equation}\label{cine21}
  \inf_{x \in [0,1]^{n-1}, \xi \in S^{n-2}} \{ |h(x)| + |\nabla h(x)|  \} \gtrsim  |y-z|.
\end{equation}
Note that the difference here is that there is no second order derivative term in \eqref{cine21} compared with \eqref{cine} and we can prove (3) even by removing the second order derivative term. This is because $\Sigma$ has same dimension as $S^{n-1}$ while in Lemma \ref{cineprop}, $\Sigma$ has codimension $1$ in $S^{n}$.

Define $\tilde h  \in C^2(\Sigma)$ by $\tilde h(w):= (y-z) \cdot w$, $w \in \Sigma$. Fix $x \in [0,1]^{n-1}$ and $\xi \in S^{n-2}$ in the following and let $w : =\Sigma(x)$.
Using the same argument and same notations as in the proof of Lemma \ref{cineprop}, we obtain
\begin{equation}\label{gradh1}
  \nabla \tilde h (w)= \sum_{i=1}^{n-1} [(y-z) \cdot \gamma_i '(0)]e_i = \sum_{i=1}^{n-1} [(y-z) \cdot e_i]e_i
\end{equation}
and
\begin{equation}\label{rel5}
  |\nabla h(x)| \sim |\nabla \tilde h(\Sigma(x))|= |\nabla \tilde h(w)|, \quad x\in[0,1]^{n-1}.
\end{equation}
Noting that $\Sigma \subset S^{n-1}$ is an $(n-1)$-dimensional manifold and $e_0(w) := w= \Sigma (x)$, we deduce that
\begin{equation*}
  e_0 \perp T_{w} \Sigma \mbox { and } \mbox{ span } \{e_0 , T_{w} \Sigma\} = \mathbb{R}^{n}.
\end{equation*}
Thus $y-z$ can be uniquely written as
\begin{equation*}
  y-z =  \sum_{i=0}^{n-1}[(y-z) \cdot e_i] e_i .
\end{equation*}
This implies that there exists at least one $i=0,\cdots,n-1$ such that $|(y-z) \cdot e_i| \ge \frac{1}{n} |y-z|$.

We consider the following two cases separately.

\medskip
\emph{Case 1.}
If $|(y-z) \cdot e_0| \ge \frac{1}{n} |y-z|$, then by $\tilde h (w)= (y-z) \cdot (|w|e_0)$,
we deduce that
$$ |h(x)| = |\tilde h (w)| = |w| |(y-z) \cdot e_0| \ge |w|\frac{1}{n}|y-z| \ge \frac{1}{n}|y-z| \sim |y-z|$$
where we recall $w \in \Sigma \subset S^{n-1}$ and hence $|w|=1$.
Thus \eqref{cine21} holds.

\medskip
\emph{Case 2.}
If $|(y-z) \cdot e_i| \ge \frac{1}{n} |y-z|$ for some $i=1,\cdots, n-1$, then by \eqref{gradh1} and \eqref{rel5},
$$ |\nabla h(x)| \sim |\nabla \tilde h(w)| \ge  |(y-z) \cdot e_i| \ge \frac{1}{n} |y-z| \gtrsim |y-z|,$$
which gives \eqref{cine21}.
The proof is complete.
\end{proof}

\begin{remark}
  We end the paper by remarking that Theorem \ref{mainproj2} implies \cite[Theorem 1.1]{ohm2023projection} in $\R^{n+1}$ for $n\ge 3$. Indeed, Theorem 1.1 in \cite{ohm2023projection} is a special case of Theorem \ref{mainproj2} where the intersection of $(n-1)$-dim family of lines and the set $H=\{x=(x_1,\cdots,x_{n+1})\in \R^{n+1}: x_1=1\}=\R^{n} \subset \R^{n+1}$ forms the graph $\Sigma_f \subset H$ of a function $f \in C^\infty(\R^{n-1})$ in the following type
  $$   f(y) = (Ly)^{\top} A (Ly), \quad y \in \R^{n-1}    $$
  where $L: \R^{n-1} \to \R^{n-1}$ is an isomorphism of $\R^{n-1}$, $A$ is a positive definite $(n-1)\times (n-1)$ matrix and $z^\top$ is the transpose of $z \in \R^{n-1}$.

  To see Theorem \ref{mainproj2} implies \cite[Theorem 1.1]{ohm2023projection}, we first show the graph $\Sigma_f$ satisfies Definition \ref{nond1}(1). Indeed, noting that for every $x \in \Sigma_f \subset H$, $x$ is not parallel to $T_x\Sigma_f$, thus span$\{x,T_x\Sigma_f\}$ is $n$-dimensional. We can take $\nu(x)$ as the unit vector in $($span$\{x,T_x\Sigma_f\})^\perp$ (up to $\pm 1$) such that $\nu \in C^2(\Sigma_f,\R^{n+1})$.
  Hence Definition \ref{nond1}(1) holds for $\Sigma_f$.

  To see $\Sigma_f$ satisfies Definition \ref{nond1}(2), noticing that $f$ is convex at all $x \in \R^{n-1}$, thus the graph $\Sigma_f$ has positive sectional curvature.
  Also noting that
  $\Sigma_f$ is a hypersurface in $H$ and $H$ has constant sectional curvature $0$, by Gauss' formula, we know
  $$ 0< K_{\xi_i,\xi_j}(x) = \kz_{i}(x) \kz_{j}(x) ,\quad x \in \Sigma_f  $$
  where $\{\xi_i\}_{1 \le i\le n-1} \subset T_x\Sigma$ are the $n-1$ principal directions,
  $K_{\xi_i,\xi_j}(x)$ is the sectional curvature of $\Sigma_f$ at $x$ spanned by $\xi_i \in T_x\Sigma_f$ and $\kz_{i}(x)$ is the principal curvature in the direction $\xi_i$.
  Thus all principal curvatures of $\Sigma_f$ must have same sign at all $x\in \R^{n-1}$. Applying Lemma \ref{equiv1}, we get the desired conclusion.
\end{remark}

\bibliographystyle{plain}
\bibliography{references.bib}

\def\cprime{$'$}
\begin{thebibliography}{10}

\bibitem{Crdoba1977THEKM}
Antonio Manj{\'o}n-Cabeza C{\'o}rdoba.
\newblock The {K}akeya maximal function and the spherical summation multi-
  pliers.
\newblock {\em American Journal of Mathematics}, 99, 1977.

\bibitem{MR4452675}
Damian D\c{a}browski, Tuomas Orponen, and Michele Villa.
\newblock Integrability of orthogonal projections, and applications to
  {F}urstenberg sets.
\newblock {\em Adv. Math.}, 407:Paper No. 108567, 34, 2022.

\bibitem{FasslerOrponen14}
Katrin F\"{a}ssler and Tuomas Orponen.
\newblock On restricted families of projections in {$\Bbb R^3$}.
\newblock {\em Proc. Lond. Math. Soc. (3)}, 109(2):353--381, 2014.

\bibitem{fassler2023hausdorff}
Katrin Fässler, Jiayin Liu, and Tuomas Orponen.
\newblock On the hausdorff dimension of circular furstenberg sets.
\newblock {\em arXiv e-prints}, arXiv:2305.11587, 2023.

\bibitem{gan2022restricted}
Shengwen Gan, Shaoming Guo, and Hong Wang.
\newblock A restricted projection problem for fractal sets in $\mathbb{R}^n$.
\newblock {\em arXiv e-prints}, arXiv:2211.09508, 2022.

\bibitem{MR4148151}
Weikun He.
\newblock Orthogonal projections of discretized sets.
\newblock {\em J. Fractal Geom.}, 7(3):271--317, 2020.

\bibitem{MR3973547}
Korn\'{e}lia H\'{e}ra.
\newblock Hausdorff dimension of {F}urstenberg-type sets associated to families
  of affine subspaces.
\newblock {\em Ann. Acad. Sci. Fenn. Math.}, 44(2):903--923, 2019.

\bibitem{MR4002667}
Korn\'{e}lia H\'{e}ra, Tam\'{a}s Keleti, and Andr\'{a}s M\'{a}th\'{e}.
\newblock Hausdorff dimension of unions of affine subspaces and of
  {F}urstenberg-type sets.
\newblock {\em J. Fractal Geom.}, 6(3):263--284, 2019.

\bibitem{MR4382473}
Korn\'{e}lia H\'{e}ra, Pablo Shmerkin, and Alexia Yavicoli.
\newblock An improved bound for the dimension of
  {$(\alpha,2\alpha)$}-{F}urstenberg sets.
\newblock {\em Rev. Mat. Iberoam.}, 38(1):295--322, 2022.

\bibitem{MR4180684}
Daniel Hug and Wolfgang Weil.
\newblock {\em Lectures on convex geometry}, volume 286 of {\em Graduate Texts
  in Mathematics}.
\newblock Springer, Cham, [2020] \copyright 2020.

\bibitem{Ka}
Robert Kaufman.
\newblock On {H}ausdorff dimension of projections.
\newblock {\em Mathematika}, 15:153--155, 1968.

\bibitem{MR1722768}
Lawrence Kolasa and Thomas Wolff.
\newblock On some variants of the {K}akeya problem.
\newblock {\em Pacific J. Math.}, 190(1):111--154, 1999.

\bibitem{marstrandtype}
Antti Käenmäki, Tuomas Orponen, and Laura Venieri.
\newblock A marstrand-type restricted projection theorem in $\mathbb{R}^{3}$.
\newblock {\em Amer. J. Math., to appear}, 2023+.

\bibitem{2022arXiv220401770L}
Jiayin Liu.
\newblock Dimension estimates on circular $(s,t)$-{F}urstenberg sets.
\newblock {\em Annales Fennici Mathematici}, 48(1):299–324, Mar. 2023.

\bibitem{MR0063439}
J.~M. Marstrand.
\newblock Some fundamental geometrical properties of plane sets of fractional
  dimensions.
\newblock {\em Proc. London Math. Soc. (3)}, 4:257--302, 1954.

\bibitem{MR0409774}
Pertti Mattila.
\newblock Hausdorff dimension, orthogonal projections and intersections with
  planes.
\newblock {\em Ann. Acad. Sci. Fenn. Ser. A I Math.}, 1(2):227--244, 1975.

\bibitem{ohm2023projection}
K.~W. Ohm.
\newblock Projection theorems and isometries of hyperbolic spaces.
\newblock {\em arXiv e-prints}, arXiv:2305.12302, 2023.

\bibitem{2021arXiv210603338O}
Tuomas {Orponen} and Pablo {Shmerkin}.
\newblock {On the Hausdorff dimension of Furstenberg sets and orthogonal
  projections in the plane}.
\newblock {\em Duke Math. J. (to appear)}, 2023{+}.

\bibitem{2022arXiv220702259P}
Malabika {Pramanik}, Tongou {Yang}, and Joshua {Zahl}.
\newblock {A Furstenberg-type problem for circles, and a Kaufman-type
  restricted projection theorem in $\mathbb{R}^3$}.
\newblock {\em arXiv e-prints}, arXiv:2207.02259, 2022.

\bibitem{MR1098614}
Christopher~D. Sogge.
\newblock Propagation of singularities and maximal functions in the plane.
\newblock {\em Invent. Math.}, 104(2):349--376, 1991.

\bibitem{MR1629775}
Wolfgang Walter.
\newblock {\em Ordinary differential equations}, volume 182 of {\em Graduate
  Texts in Mathematics}.
\newblock Springer-Verlag, New York, 1998.
\newblock Translated from the sixth German (1996) edition by Russell Thompson,
  Readings in Mathematics.

\bibitem{zahl2023maximal}
Joshua Zahl.
\newblock On maximal functions associated to families of curves in the plane.
\newblock {\em arXiv e-prints}, arXiv:2307.05894, 2023.

\end{thebibliography}

\end{document}